\newcommand{\nc}{\newcommand}
\newcommand{\delete}[1]{}
\nc{\mlabel}[1]{\label{#1}}  
\nc{\mcite}[1]{\cite{#1}}  
\nc{\mref}[1]{\ref{#1}}  
\nc{\meqref}[1]{\eqref{#1}}  
\nc{\mbibitem}[1]{\bibitem{#1}} 
\nc{\mlabel}[1]{\label{#1}  
{\hfill \hspace{1cm}{\small\tt{{\ }\hfill(#1)}}}}
\nc{\mcite}[1]{\cite{#1}{\small{\tt{{\ }(#1)}}}}  
\nc{\mref}[1]{\ref{#1}{{\tt{{\ }(#1)}}}}  
\nc{\meqref}[1]{\eqref{#1}{{\tt{{\ }(#1)}}}}  
\nc{\mbibitem}[1]{\bibitem[\bf #1]{#1}} 
\newcommand{\Rmnum}[1]{\expandafter\@slowromancap\romannumeral #1@}
\newtheorem{theorem}{Theorem}[section]
\newtheorem{prop}[theorem]{Proposition}
\newtheorem{lemma}[theorem]{Lemma}
\theoremstyle{definition}
\newtheorem{defn}[theorem]{Definition}
\newtheorem{prop-def}{Proposition-Definition}[section]
\newtheorem{remark}[theorem]{Remark}
\newtheorem{conjecture}[theorem]{Conjecture}
\newtheorem{tempex}[theorem]{Example}
\newtheorem{tempexs}[theorem]{Examples}
\newtheorem{temprmk}[theorem]{Remark}
\newtheorem{tempexer}{Exercise}[section]
\newenvironment{exam}{\begin{tempex}\rm}{\end{tempex}}
\nc{\vsa}{\vspace{-.1cm}} \nc{\vsb}{\vspace{-.2cm}}
\nc{\vsc}{\vspace{-.3cm}} \nc{\vsd}{\vspace{-.4cm}}
\nc{\vse}{\vspace{-.5cm}}
\nc{\Irr}{\mathrm{Irr}}
\nc{\ncrbw}{\calr}  
\nc{\NS}{U_{NS}}
\nc{\FN}{F_{\mathrm Nij}}
\nc{\dfgen}{V} \nc{\dfrel}{R}
\nc{\dfgenb}{\vec{v}} \nc{\dfrelb}{\vec{r}}
\nc{\dfgene}{v} \nc{\dfrele}{r}
\nc{\dfop}{\odot}
\nc{\dfoa}{\dfop^{(1)}} \nc{\dfob}{\dfop^{(2)}}
\nc{\dfoc}{\dfop^{(3)}} \nc{\dfod}{\dfop^{(4)}}
\nc{\mapm}[1]{\lfloor\!|{#1}|\!\rfloor}
\nc{\cmapm}[1]{\frakC(#1)}
\nc{\red}{\mathrm{Red}}
\nc{\cm}{C}
\nc{\supp}{\mathrm{Supp}}
\nc{\lex}{\mathrm{lex}}
\nc{\disp}[1]{\displaystyle{#1}}
\nc{\bin}[2]{ (_{\stackrel{\scs{#1}}{\scs{#2}}})}  
\nc{\bs}{\bar{S}} \nc{\ep}{\epsilon}
\nc{\dbigcup}{\stackrel{\bullet}{\bigcup}}
\nc{\la}{\longrightarrow} \nc{\cprod}{\ast} \nc{\rar}{\rightarrow}
\nc{\dar}{\downarrow} \nc{\labeq}[1]{\stackrel{#1}{=}}
\nc{\dap}[1]{\downarrow \rlap{$\scriptstyle{#1}$}}
\nc{\uap}[1]{\uparrow \rlap{$\scriptstyle{#1}$}}
\nc{\defeq}{\stackrel{\rm def}{=}} \nc{\dis}[1]{\displaystyle{#1}}
\nc{\dotcup}{\ \displaystyle{\bigcup^\bullet}\ }
\nc{\sdotcup}{\tiny{ \displaystyle{\bigcup^\bullet}\ }}
\nc{\fe}{\'{e}}
\nc{\hcm}{\ \hat{,}\ } \nc{\hcirc}{\hat{\circ}}
\nc{\hts}{\hat{\shpr}} \nc{\lts}{\stackrel{\leftarrow}{\shpr}}
\nc{\denshpr}{\den{\shpr}}
\nc{\rts}{\stackrel{\rightarrow}{\shpr}} \nc{\lleft}{[}
\nc{\lright}{]} \nc{\uni}[1]{\tilde{#1}} \nc{\free}[1]{\bar{#1}}
\nc{\freea}[1]{\tilde{#1}} \nc{\freev}[1]{\hat{#1}}
\nc{\dt}[1]{\hat{#1}}
\nc{\wor}[1]{\check{#1}}
\nc{\intg}[1]{F_C(#1)}
\nc{\den}[1]{\check{#1}} \nc{\lrpa}{\wr} \nc{\mprod}{\pm}
\nc{\dprod}{\ast_P} \nc{\curlyl}{\left \{ \begin{array}{c} {} \\
{} \end{array}
    \right .  \!\!\!\!\!\!\!}
\nc{\curlyr}{ \!\!\!\!\!\!\!
    \left . \begin{array}{c} {} \\ {} \end{array}
    \right \} }
\nc{\longmid}{\left | \begin{array}{c} {} \\ {} \end{array}
    \right . \!\!\!\!\!\!\!}
\nc{\lin}{\call} \nc{\ot}{\otimes}
\nc{\ora}[1]{\stackrel{#1}{\rar}}
\nc{\ola}[1]{\stackrel{#1}{\la}}
\nc{\scs}[1]{\scriptstyle{#1}} \nc{\mrm}[1]{{\rm #1}}
\nc{\margin}[1]{\marginpar{\rm #1}}   
\nc{\dirlim}{\displaystyle{\lim_{\longrightarrow}}\,}
\nc{\invlim}{\displaystyle{\lim_{\longleftarrow}}\,}
\nc{\mvp}{\vspace{0.5cm}}
\nc{\mult}{m}       
\nc{\svp}{\vspace{2cm}} \nc{\vp}{\vspace{8cm}}
\nc{\proofbegin}{\noindent{\bf Proof: }}
\nc{\proofend}{$\blacksquare$ \vspace{0.5cm}}
\nc{\sha}{{\mbox{\cyr X}}}  
\nc{\ncsha}{{\mbox{\cyr X}^{\mathrm NC}}}
\newfont{\scyr}{wncyr10 scaled 550}
\nc{\ssha}{\mbox{\bf \scyr X}}
\nc{\ncshao}{{\mbox{\cyr X}^{\mathrm NC,\,0}}}
\nc{\shpr}{\diamond}    
\nc{\shprc}{\shpr_c}
\nc{\shpro}{\diamond^0}    
\nc{\shpru}{\check{\diamond}} \nc{\spr}{\cdot}
\nc{\catpr}{\diamond_l} \nc{\rcatpr}{\diamond_r}
\nc{\lapr}{\diamond_a} \nc{\lepr}{\diamond_e} \nc{\sprod}{\bullet}
\nc{\un}{u}                 
\nc{\vep}{\varepsilon} \nc{\labs}{\mid\!} \nc{\rabs}{\!\mid}
\nc{\hsha}{\widehat{\sha}} \nc{\psha}{\sha^{+}} \nc{\tsha}{\tilde{\sha}}
\nc{\lsha}{\stackrel{\leftarrow}{\sha}}
\nc{\rsha}{\stackrel{\rightarrow}{\sha}} \nc{\lc}{\lfloor}
\nc{\rc}{\rfloor} \nc{\sqmon}[1]{\langle #1\rangle}
\nc{\altx}{\Lambda} \nc{\vecT}{\vec{T}} \nc{\piword}{{\mathfrak P}}
\nc{\lbar}[1]{\overline{#1}}
\nc{\dep}{\mathrm{dep}}
\nc{\mmbox}[1]{\mbox{\ #1\ }}
\nc{\ayb}{\mrm{AYB}} \nc{\mayb}{\mrm{mAYB}} \nc{\cyb}{\mrm{cyb}}
\nc{\ann}{\mrm{ann}} \nc{\Aut}{\mrm{Aut}} \nc{\cabqr}{\mrm{CABQR
}} \nc{\can}{\mrm{can}} \nc{\colim}{\mrm{colim}}
\nc{\Cont}{\mrm{Cont}} \nc{\rchar}{\mrm{char}}
\nc{\cok}{\mrm{coker}} \nc{\dtf}{{R-{\rm tf}}} \nc{\dtor}{{R-{\rm
tor}}}
\nc{\Div}{{\mrm Div}} \nc{\End}{\mrm{End}} \nc{\Ext}{\mrm{Ext}}
\nc{\FG}{\mrm{FG}} \nc{\Fil}{\mrm{Fil}} \nc{\Frob}{\mrm{Frob}}
\nc{\Gal}{\mrm{Gal}} \nc{\GL}{\mrm{GL}} \nc{\Hom}{\mrm{Hom}}
\nc{\hsr}{\mrm{H}} \nc{\hpol}{\mrm{HP}} \nc{\id}{\mrm{id}} \nc{\Id}{\mathrm{Id}}  \nc{\ID}{\mathrm{ID}}
\nc{\im}{\mrm{im}} \nc{\incl}{\mrm{incl}} \nc{\Loday}{\mrm{ABQR}\
} \nc{\length}{\mrm{length}} \nc{\LR}{\mrm{LR}} \nc{\mchar}{\rm
char} \nc{\pmchar}{\partial\mchar} \nc{\map}{\mrm{Map}}
\nc{\MS}{\mrm{MS}} \nc{\OS}{\mrm{OS}} \nc{\NC}{\mrm{NC}}
\nc{\rba}{\rm{Rota-Baxter algebra}\xspace}
\nc{\rbas}{\rm{Rota-Baxter algebras}\xspace}
\nc{\rbw}{\ncrbw}
\nc{\rbws}{\rm{RBWs}\xspace}
\nc{\rbadj}{\rm{RB}\xspace}
\nc{\mpart}{\mrm{part}} \nc{\ql}{{\QQ_\ell}} \nc{\qp}{{\QQ_p}}
\nc{\rank}{\mrm{rank}} \nc{\rcot}{\mrm{cot}} \nc{\rdef}{\mrm{def}}
\nc{\rdiv}{{\rm div}} \nc{\rtf}{{\rm tf}} \nc{\rtor}{{\rm tor}}
\nc{\res}{\mrm{res}} \nc{\SL}{\mrm{SL}} \nc{\Spec}{\mrm{Spec}}
\nc{\tor}{\mrm{tor}} \nc{\Tr}{\mrm{Tr}}
\nc{\mtr}{\mrm{tr}}
\nc{\ab}{\mathbf{Ab}} \nc{\Alg}{\mathbf{Alg}}
\nc{\Bax}{\mathbf{CRB}} \nc{\Algo}{\mathbf{Alg}^0}
\nc{\cRB}{\mathbf{CRB}} \nc{\cRBo}{\mathbf{CRB}^0}
\nc{\RBo}{\mathbf{RB}^0} \nc{\BRB}{\mathbf{RB}}
\nc{\Dend}{\mathbf{DD}} \nc{\bfk}{{\bf k}} \nc{\bfone}{{\bf 1}}
\nc{\base}[1]{{a_{#1}}} \nc{\Cat}{\mathbf{Cat}}
 \nc{\DN}{\mathbf{DN}}
\nc{\NA}{\mathbf{NA}}
\nc{\SDN}{\mathbf{SDN}}
\nc{\Diff}{\mathbf{Diff}} \nc{\gap}{\marginpar{\bf
Incomplete}\noindent{\bf Incomplete!!}
    \svp}
\nc{\FMod}{\mathbf{FMod}} \nc{\Int}{\mathbf{Int}}
\nc{\Mon}{\mathbf{Mon}}
\nc{\RB}{\mathbf{RB}} \nc{\remarks}{\noindent{\bf Remarks: }}
\nc{\Rep}{\mathbf{Rep}} \nc{\Rings}{\mathbf{Rings}}
\nc{\Sets}{\mathbf{Sets}} \nc{\bfx}{\mathbf{x}}
\nc{\BA}{{\Bbb A}} \nc{\CC}{{\Bbb C}} \nc{\DD}{{\Bbb D}}
\nc{\EE}{{\Bbb E}} \nc{\FF}{{\Bbb F}} \nc{\GG}{{\Bbb G}}
\nc{\HH}{{\Bbb H}} \nc{\LL}{{\Bbb L}} \nc{\NN}{{\Bbb N}}
\nc{\QQ}{{\Bbb Q}} \nc{\RR}{{\Bbb R}} \nc{\TT}{{\Bbb T}}
\nc{\VV}{{\Bbb V}} \nc{\ZZ}{{\Bbb Z}}
\nc{\cala}{{\mathcal A}} \nc{\calb}{{\mathcal B}}
\nc{\calc}{{\mathcal C}}
\nc{\cald}{{\mathcal D}} \nc{\cale}{{\mathcal E}}
\nc{\calf}{{\mathcal F}} \nc{\calg}{{\mathcal G}}
\nc{\calh}{{\mathcal H}} \nc{\cali}{{\mathcal I}}
\nc{\calj}{{\mathcal J}} \nc{\call}{{\mathcal L}}
\nc{\calm}{{\mathcal M}} \nc{\caln}{{\mathcal N}}
\nc{\calo}{{\mathcal O}} \nc{\calp}{{\mathcal P}}
\nc{\calr}{{\mathcal R}} \nc{\cals}{{\mathcal S}} \nc{\calt}{{\mathcal T}}
\nc{\calw}{{\mathcal W}} \nc{\calx}{{\mathcal X}} \nc{\caly}{{\mathcal Y}} \nc{\calz}{{\mathcal Z}}
\nc{\CA}{\mathcal{A}}
\nc{\frakA}{{\mathfrak A}}
\nc{\fraka}{{\mathfrak a}}
\nc{\frakB}{{\mathfrak B}}
\nc{\frakb}{{\mathfrak b}}
\nc{\frakC}{{\mathfrak C}}
\nc{\frakd}{{\mathfrak d}}
\nc{\frakF}{{\mathfrak F}}
\nc{\frakg}{{\mathfrak g}}
\nc{\frakm}{{\mathfrak m}}
\nc{\frakM}{{\mathfrak M}}
\nc{\frakMo}{{\mathfrak M}^0}
\nc{\frakP}{{\mathfrak P}}
\nc{\frakp}{{\mathfrak p}}
\nc{\frakS}{{\mathfrak S}}
\nc{\frakSo}{{\mathfrak S}^0}
\nc{\fraks}{{\mathfrak s}}
\nc{\os}{\overline{\fraks}}
\nc{\frakT}{{\mathfrak T}}
\nc{\frakTo}{{\mathfrak T}^0}
\nc{\oT}{\overline{T}}
\nc{\frakX}{{\mathfrak X}}
\nc{\frakXo}{{\mathfrak X}^0}
\nc{\frakx}{{\mathbf x}}
\nc{\frakTx}{\frakT}      
\nc{\frakTa}{\frakT^a}        
\nc{\frakTxo}{\frakTx^0}   
\nc{\caltao}{\calt^{a,0}}   
\nc{\ox}{\overline{\frakx}} \nc{\fraky}{{\mathfrak y}}
\nc{\frakz}{{\mathfrak z}} \nc{\oX}{\overline{X}} \font\cyr=wncyr10
\nc{\tred}[1]{\textcolor{red}{#1}} \nc{\tgreen}[1]{\textcolor{green}{#1}}
\nc{\tblue}[1]{\textcolor{blue}{#1}} \nc{\tpurple}[1]{\textcolor{purple}{#1}}
\nc{\li}[1]{\tpurple{\underline{Li:}#1 }}
\nc{\liadd}[1]{\tpurple{#1}}
\nc{\xing}[1]{\tblue{\underline{Xing:}#1 }}
\nc{\YZ}[1]{\tred{\underline{Yaozhou:} #1}}
\nc{\deleted}[1]{\delete{#1}}
\nc{\astarrow}{\overset{\raisebox{-2pt}{{\scriptsize $\ast$}}}{\rightarrow}}\nc{\tvarrow}[3]{#1\overset{(t,v)}{\longrightarrow}_{#3} #2}
\nc{\fg}[2]{\tilde{\mathfrak{g}}_{#1}^{(#2)}}
\nc{\De}[2]{\Delta^{#1}(#2)}
\nc{\bott}{b}
\nc{\down}[2]{{\rm down}_{#1}(#2)}
\nc{\Down}[3]{{\rm down}_{#1}^{#2}(#3)}
\nc{\uup}[2]{{\rm up}_{#1}(#2)}
\nc{\dkmu}{\Delta^k(\mu)} \nc{\dkl}{\Delta^k(\lambda)}
\nc{\pk}{{\rm P}^k}
\nc{\pkl}{{\rm P}_{\ell}^{k}}
\nc{\tpkl}{\tilde{{\rm P}}_{\ell}^{k}}
\nc{\spkl}{{\rm sP}_{\ell}^{k}}
\nc{\lmx}{\mu} \nc{\lsum}{\nu}
\nc{\ddh}{\bar{z}}
\begin{document}
\title[Two conjectures of closed $k$-Schur Katalan functions]{Alternating dual Pieri rule conjecture and $k$-branching conjecture of closed $k$-Schur Katalan functions}
%
\author{Yaozhou Fang}
\address{School of Mathematics and Statistics, Lanzhou University
Lanzhou, 730000, China
}
\email{fangyzh21@lzu.edu.cn}

\author{Xing Gao$^*$}\thanks{*Corresponding author}
\address{School of Mathematics and Statistics, Lanzhou University,
Lanzhou, 730000, China; Gansu Provincial Research Center for Basic Disciplines of Mathematics and Statistics, Lanzhou, 730070, China
}
\email{gaoxing@lzu.edu.cn}

%

\date{\today}
\begin{abstract}
For closed $k$-Schur Katalan functions $\fg{\lambda}{k}$ with $k$ a positive integer and $\lambda$ a $k$-bounded partition, Blasiak, Morse and Seelinger proposed the alternating dual Pieri rule conjecture and the $k$-branching conjecture. In the present paper, we positively prove the first one for large enough $k$ and for strictly decreasing partitions $\lambda$ respectively, as well as the second one for strictly decreasing partitions $\lambda$.
\end{abstract}

\makeatletter
\@namedef{subjclassname@2020}{\textup{2020} Mathematics Subject Classification}
\makeatother
\subjclass[2020]{
05E05, 
05E10, 
14N15, 
}

\keywords{$k$-Schur function, Katalan function, closed $k$-Schur Katalan function, positivity}

\maketitle

\tableofcontents

\setcounter{section}{0}

\allowdisplaybreaks

\section{Introduction}
In this paper, we focus on the alternating dual Pieri rule conjecture and the $k$-branching conjecture of closed $k$-Schur Katalan functions $\fg{\lambda}{k}$, posed by Blasiak, Morse and Seelinger in~\cite{BMS}.
In details, we prove the first one for large enough $k$ and for strictly decreasing partitions $\lambda$ respectively, and the second one for strictly decreasing partitions $\lambda$.
Some usual notations are listed in page four below.

\subsection{From $k$-Schur functions to closed $k$-Schur Katalan functions}
Lapointe, Lascoux and Morse~\cite{LLM} proposed the concept of $k$-Schur function, as a generalization of the Schur function,
to study a refinement of Macdonald's positivity conjecture. The research of $k$-Schur functions has yielded fruitful outcomes~\cite{BMPS20,Lam,LLMSSZ,LLMS,LS,LLM,LM03,LM05}. In particular, $k$-Schur functions form a linear basis of the Hopf algbra $\Lambda_{(k)}:= \QQ\{h_1,\ldots,h_k\}$ of symmetric functions~\cite{LLM}.
Geometrically, Lam~\cite{Lam} showed that $k$-Schur functions are Schubert classes for $H_*({\rm Gr})$ under the isomorphism studied by Bott~\cite{Bot}, where ${\rm Gr}$ is the affine Grassmannian. Later, based on the above results, Lam and Shimozono~\cite{LS} exposed an isomorphism from localizations of $\mathcal{QH}^*(F\ell_{k+1})$ to localizations of $H_*({\rm Gr})$, where $\mathcal{QH}^*(F\ell_{k+1})$ is the quantum cohomology ring of the complete flag variety $F\ell_{k+1}$. In that paper, Lam and Shimozono showed that $k$-Schur functions are the core of the image of quantum Schubert polynomials studied by Fomin, Gelfand and Postnikov~\cite{FGP} under the isomorphism in~\cite{LS}. This conclusion established a connection between two very different objects---$k$-Schur functions and quantum Schubert polynomials, and further made properties of both sides interchangeable.

Recently, Ikeda, Iwao and Maeno~\mcite{IIM} learned a $K$-theoretic version $\Phi$ of the isomorphism in~\cite{LS}, from localizations of $\mathcal{QK}(F\ell_{k+1})$ to localizations of $K_*({\rm Gr})$. Here $\mathcal{QK}(F\ell_{k+1})$ is the quantum $K$-theory ring of $F\ell_{k+1}$ studied by Givental and Lee~\mcite{GL}, and $K_*({\rm Gr})$ is the $K$-homology of ${\rm Gr}$ isomorphic to $\Lambda_{(k)}$ as Hopf algebras~\cite{LSS}.
\begin{equation*}
\begin{tikzcd}[column sep=scriptsize, row sep=scriptsize]
\mathcal{QH}^*(F\ell_{k+1})_{{\rm loc}} \arrow{rr}{\text{\cite{LS}}}[swap]{\backsimeq} \arrow{dd}[swap]{\text{$K$-theoretic}}{\text{version}} &&H_*({\rm Gr})_{{\rm loc}} \arrow{dd}[swap]{\text{$K$-theoretic}}{\text{version}}\\
\\
\mathcal{QK}(F\ell_{k+1})_{{\rm loc}} \arrow{rr}{\text{\cite{IIM}}~\Phi}[swap]{\backsimeq} &&K_*({\rm Gr})_{{\rm loc}}
\end{tikzcd}
\end{equation*}

Also, Lenart and Maeno~\mcite{LeMa} exposed a quantum $K$-theoretic version of quantum Schubert polynomials, named quantum Grothendieck polynomials $\mathfrak{B}_{w}^{Q}\in\mathcal{QK}(F\ell_{k+1})$ with $w$ in the symmetric group $S_{k+1}$.
For a better study of the image $\Phi(\mathfrak{B}_{w}^{Q})$, Blasiak, Morse and Seelinger~\mcite{BMS} initiated a family of inhomogenous symmetric functions, called {\bf Katalan functions} (see Definition~\mref{defn:KataFunc}).
There Blasiak et al. also constructed a special kind of Katalan functions $\tilde{\mathfrak{g}}_{\lambda}^{(k)}$, named {\bf closed $k$-Schur Katalan functions} (see Definition~\mref{defn:cKataFunc}), which form a linear basis of $\Lambda_{(k)}$. Later, Ikeda, Iwao and Naito~\mcite{IIN} obtained that closed $k$-Schur Katalan functions $\fg{\lambda}{k}$ are exactly the core of the image of quantum Grothendieck polynomials under $\Phi$.

Although the above work has been completed, there are still some conjectures remaining unsolved about closed $k$-Schur Katalan functions~\cite[Conjecture~2.12]{BMS}.

\subsection{Alternating dual Pieri rule conjecture and $k$-branching conjecture}
Based on closed $k$-Schur Katalan functions, Blasiak et al. proposed the following two conjectures, out of the six conjectures in~\cite[Conjecture~2.12]{BMS}.

\begin{conjecture}
Let $\lambda\in\pkl$ and $m\in\ZZ_{\geq0}$.
\begin{enumerate}
\item (alternating dual Pieri rule)
The coefficients $c_{\lambda\mu}$ in
\begin{equation*}
G_{1^m}^\perp\fg{\lambda}{k} = \sum_{\mu\in{\rm P}}c_{\lambda\mu}\fg{\mu}{k}
\end{equation*}
satisfy $(-1)^{|\lambda|-|\mu|-m}c_{\lambda\mu}\in\ZZ_{\geq0}$.
\mlabel{it:aimalt}

\item ($k$-branching)
The coefficients $a_{\lambda\mu}$ in
\begin{equation*}
\fg{\lambda}{k} = \sum_{\mu\in{\rm P}}a_{\lambda\mu}\fg{\mu}{k+1}
\end{equation*}
satisfy $(-1)^{|\lambda|-|\mu|} a_{\lambda\mu}\in\ZZ_{\geq0}$.
\mlabel{it:aimk}
\end{enumerate}
\mlabel{conj:aim}
\end{conjecture}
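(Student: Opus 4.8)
The plan is to prove Conjecture~\ref{conj:aim} in the two announced ranges by first pinning down the closed $k$-Schur Katalan function $\fg{\lambda}{k}$ explicitly --- through its raising-operator (Katalan function) presentation in Definition~\ref{defn:cKataFunc} --- and then importing the required signed positivity from known results on stable and dual stable Grothendieck functions $G_\lambda$, $g_\lambda$ and on $K$-theoretic $k$-Schur functions.

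For part~\ref{it:aimalt} with $k$ large, the first step is a stabilization lemma: inspecting the root ideal that defines $\fg{\lambda}{k}$, one shows that once $k$ exceeds an explicit bound of order $\lambda_1+\ell(\lambda)$ the defining data collapse, so that $\fg{\lambda}{k}=g_\lambda$. Next, since the bottom-degree term of $G_{1^m}$ is $e_m$, the operator $G_{1^m}^\perp$ strictly lowers degree and, expanding in the Schur basis, every $\mu$ appearing in $G_{1^m}^\perp g_\lambda$ satisfies $\mu\subseteq\lambda$; hence $\fg{\mu}{k}=g_\mu$ for all such $\mu$, and the problem reduces to the adjunction identity $c_{\lambda\mu}=\langle g_\lambda,\,G_{1^m}G_\mu\rangle$. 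This last number is the Grothendieck structure constant $c^{\lambda}_{1^m,\mu}$, which by Buch's theorem satisfies $(-1)^{|\lambda|-|\mu|-m}c_{\lambda\mu}\in\ZZ_{\geq0}$, proving part~\ref{it:aimalt} in this range.

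For strictly decreasing $\lambda$ the approach is different: here all multiplicities of $\lambda$ equal one, so the root ideal defining $\fg{\lambda}{k}$ acquires a simple explicit shape and we expect the raising-operator product to resolve into a Jacobi--Trudi-type alternant in the $h_i$ with the standard $K$-theoretic lower-order corrections --- equivalently, $\fg{\lambda}{k}$ should be identified with a concrete element of the $K$-homology Schubert basis. Granting such a closed form, part~\ref{it:aimalt} follows by applying $G_{1^m}^\perp$ to it and re-expanding, while part~\ref{it:aimk} follows by comparing the closed forms at levels $k$ and $k+1$; in both cases the resulting coefficients are again structure constants (respectively branching coefficients) whose alternating signs are known.

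The principal obstacle lies in these last two reductions. Even though $\lambda$ is strictly decreasing, the partitions $\mu$ occurring on the right-hand side of either expansion are generally not, so the explicit formula is not available for them and a naive induction on $\ell(\lambda)$ or on $|\lambda|$ does not close. To get around this one works throughout in a fixed basis for which signed positivity holds in all degrees --- the $K$-$k$-Schur functions, or Grothendieck functions after stabilization --- using that the change of basis between $\{\fg{\mu}{k}\}$ and that basis is signed-unitriangular, and only converts back at the very end. Carrying out this conversion while keeping track of the inhomogeneous correction terms is the technical heart of the argument.
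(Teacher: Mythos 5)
Your plan splits into two very different regimes, and they fare very differently against the paper.

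For the large-$k$ range your argument is a valid alternative. The paper's Theorem~\ref{thm:aim1} simply observes that $e_{d}^{\perp}g_{\lambda}=0$ for $d>\ell$ (Lemma~\ref{lem:eKatalan}), so only the $e_{\ell}^{\perp}$ term of $G_{1^{\ell}}^{\perp}$ survives and one gets $G_{1^{\ell}}^{\perp}g_{\lambda}=g_{\lambda-1^{\ell}}$ directly and with coefficient $1$. You instead dualize to $c_{\lambda\mu}=\langle g_{\lambda},G_{1^{m}}G_{\mu}\rangle$ and invoke Buch's alternating positivity for stable Grothendieck products, also using $\mu\subseteq\lambda$ to stay in the stable range. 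Your route is heavier machinery but in principle covers general $m$ rather than only $m=\ell$; the paper's is essentially one line. Both work.

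For strictly decreasing $\lambda$, your proposal has a genuine gap. You "grant" that $\fg{\lambda}{k}$ has a Jacobi--Trudi-type closed form when $\lambda$ is strictly decreasing, but no such formula is established here (or elsewhere to my knowledge): even for strictly decreasing $\lambda$ the root ideal $\Delta^{k}(\lambda)$ and the multiset of lowering operators in Definition~\ref{defn:cKataFunc} do not collapse to an alternant, and the paper never derives such an identity. You then correctly diagnose the second obstruction --- the partitions $\mu$ produced by the expansion are generally not strictly decreasing, so a naive induction on $\ell(\lambda)$ or $|\lambda|$ doesn't close --- but your proposed escape (pass to a "fixed basis" with known signed positivity via a "signed-unitriangular change of basis" and convert back at the end) is not substantiated: you neither specify which basis has the needed positivity property for $G_{1^m}^{\perp}$, nor show that the change of basis preserves the alternating sign pattern after composing with $G_{1^m}^{\perp}$, which is exactly the hard part. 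The paper instead never leaves the $\fg{\mu}{k}$ basis: it proves a new Mirror Lemma (Lemma~\ref{lem:mirr2}), analyzes $L_{z}\fg{\lambda}{k}$ through Theorem~\ref{thm:finalform} and Lemma~\ref{lem:indkey}, and shows in Proposition~\ref{prop:ind} that when $\lambda_{z}>\lambda_{z+1}$ every coefficient of $L_{z}\fg{\lambda}{k}$ in the $\fg{\mu}{k}$ basis is $0$ or $(-1)^{|\lambda|-|\mu|-1}$. That statement is strong enough to iterate over products $\prod_{z\in S}L_{z}$ when $\lambda$ is strictly decreasing (each factor keeps a "descent" at the next position), and that is what resolves the nonclosure problem you flagged but did not solve. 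Finally, for part~\ref{it:aimk} your "compare closed forms at levels $k$ and $k+1$" misses the mechanism the paper actually uses: the shift-invariance identity $G_{1^{\ell}}^{\perp}\fg{\lambda+1^{\ell}}{k+1}=\fg{\lambda}{k}$ from~\cite[Prop.~2.16(c)]{BMS}, which reduces the $k$-branching statement to Theorem~\ref{thm:aim2} after noting $\lambda+1^{\ell}$ is strictly decreasing whenever $\lambda$ is.
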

Here $G_{1^m}$ with $m\in\ZZ_{\geq0}$ is a special class of {\bf stable Grothendieck polynomials} exposed in~\cite{FK94,FK96,Las90}, and has the form
\begin{equation}
G_{1^m} = \sum_{i\geq 0}(-1)^i\binom{m-1+i}{m-1}e_{m+i}.
\mlabel{eq:G}
\end{equation}

It is worth mentioning that Ikeda, Iwao and Naito~\mcite{IIN} proved other two of the  six conjectures listed in~\cite[Conjecture~2.12]{BMS}.
The $k$-branching conjecture is an important positivity property of closed $k$-Schur Katalan functions.
Indeed, there is a series of natural filters
$$\Lambda_{(1)}\subset\Lambda_{(2)}\subset\cdots$$
by the definition of $\Lambda_{(k)}$. The validness of $k$-branching conjecture
elegantly describes the process of embedding from $\Lambda_{(k)}$ to $\Lambda_{(k+1)}$.
The alternating dual Pieri rule is a sufficient condition for the $k$-branching property~\cite[Remark~2.13]{BMS}.

\subsection{Main results and outline of the present paper}
In this paper, we prove respectively that Conjecture~\mref{conj:aim}~\mref{it:aimalt} is valid for sufficiently large enough positive integers $k$ and for strictly decreasing partitions.
We also verify that Conjecture~\mref{conj:aim}~\mref{it:aimk} is true
for strictly decreasing partitions. Formally, we obtain the following three theorems,
inspired by the method in~\cite{BMPS19,BMS} with studying of Euler characteristics of vector bundles on the flag variety~\cite{Br,Ch,Pan,SW}.

\begin{theorem}
Let $\lambda$ be a partition of length $\ell$. Then
$
G_{1^\ell}^{\perp}g_{\lambda} = g_{\lambda-1^\ell}
$.
\mlabel{thm:aim1}
\end{theorem}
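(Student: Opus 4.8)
The statement is an extremal, single‑term instance of the alternating dual Pieri phenomenon, arising when $m$ equals the full length $\ell$ of $\lambda$; this degeneracy is what makes a direct computation feasible. The plan is to pass to the complete‑homogeneous basis, where $G_{1^\ell}^\perp$ becomes transparent precisely because $\ell$ is the full length of $\lambda$. From the raising‑operator description of the Katalan function $g_\lambda$ (Definition~\ref{defn:KataFunc}), write $g_\lambda=\sum_\mu c_\mu h_\mu$; every composition $\mu$ that occurs has at most $\ell$ parts, since the raising and Katalan operators act on compositions indexed by $\{1,\dots,\ell\}$ and merely shuffle weight among these $\ell$ slots. Then, using the explicit form $G_{1^\ell}=\sum_{i\ge0}(-1)^i\binom{\ell-1+i}{\ell-1}e_{\ell+i}$ of~\eqref{eq:G}, one has $G_{1^\ell}^\perp=\sum_{i\ge0}(-1)^i\binom{\ell-1+i}{\ell-1}e_{\ell+i}^\perp$, a finite sum on any element of bounded degree.

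The elementary input is the action of $e_j^\perp$ on an $h$-monomial: from $\Delta(h_n)=\sum_{a+b=n}h_a\otimes h_b$ and $\langle e_j,h_\nu\rangle=\delta_{\nu,(1^j)}$ one obtains $e_j^\perp(h_{\mu_1}\cdots h_{\mu_r})=\sum_{S\subseteq[r],\,|S|=j}\big(\prod_{i\in S}h_{\mu_i-1}\big)\big(\prod_{i\notin S}h_{\mu_i}\big)$, with the conventions $h_0=1$ and $h_{<0}=0$. Applying this to each $h_\mu$ in $g_\lambda$, all of which have at most $\ell$ parts: for $i\ge1$ there is no subset $S$ of size $\ell+i$, so $e_{\ell+i}^\perp g_\lambda=0$; for $i=0$ only the $h_\mu$ with exactly $\ell$ parts contribute, and the unique admissible choice $S=[\ell]$ simply decrements every part, $e_\ell^\perp h_\mu=h_{\mu-1^\ell}$. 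Hence the alternating sum collapses to its leading term,
\[
G_{1^\ell}^\perp g_\lambda \;=\; e_\ell^\perp g_\lambda \;=\; \sum_{\mu:\,\ell(\mu)=\ell} c_\mu\, h_{\mu-1^\ell}.
\]

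It remains to recognize the right‑hand side as $g_{\lambda-1^\ell}$, and this is where the real work lies. I would establish a shift‑invariance property of the Katalan construction on a fixed number $\ell$ of rows: the coefficient $c_\mu$ of $h_\mu$ in $g_\lambda$ depends only on the translation vector $\mu-\lambda$, because each monomial in the raising and Katalan operators translates the indexing composition by a fixed integer vector, and in this formalism the only constraint deciding which translates survive is nonnegativity of all entries (again $h_0=1$, $h_{<0}=0$). Then $\mu\mapsto\mu-1^\ell$ is a coefficient‑preserving bijection from the length‑$\ell$ compositions occurring in $g_\lambda$ onto the nonnegative compositions occurring in the length‑$\ell$ Katalan expansion of $\lambda-1^\ell$; combined with the standard fact that $g_{\lambda-1^\ell}$ may be computed on $\ell\ge\ell(\lambda-1^\ell)$ rows, this yields $\sum_{\ell(\mu)=\ell}c_\mu h_{\mu-1^\ell}=g_{\lambda-1^\ell}$, completing the argument. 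The main obstacle is exactly this identification: one must verify that the Katalan operators, not merely the raising operators $R_{ij}$, commute with the global shift by $1^\ell$, and that passing from $\lambda$ down to $\lambda-1^\ell$ introduces no spurious vanishing. An alternative route avoids it altogether: using that $G_{1^\ell}^\perp$ is adjoint to multiplication by $G_{1^\ell}$ for the Hall inner product and that $\{g_\nu\}$, $\{G_\nu\}$ are dual bases, $G_{1^\ell}^\perp g_\lambda=\sum_\nu\langle g_\lambda,G_{1^\ell}G_\nu\rangle\,g_\nu$, so the claim reduces to showing that the coefficient of $G_\lambda$ in the product of stable Grothendieck polynomials $G_{1^\ell}G_\nu$ equals $\delta_{\nu,\lambda-1^\ell}$; by the $K$-theoretic Pieri rule this holds because inside the $\ell$-row shape $\lambda$ a vertical strip of size at least $\ell$ can only be $1^\ell$, and the corresponding Pieri coefficient is $1$.
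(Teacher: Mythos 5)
Both halves of your argument are sound, but they sit very differently relative to the paper. Your first route reproduces the paper's structure --- the alternating sum defining $G_{1^\ell}^\perp$ collapses to $e_\ell^\perp$ because $e_d^\perp g_\lambda=0$ for $d>\ell$, and $e_\ell^\perp$ decrements every row by one --- but you do not invoke Lemma~\ref{lem:eKatalan} (quoted from BMS), which states $e_d^\perp K(\Psi;M;\gamma)=\sum_{S\subseteq[\ell],\,|S|=d}K(\Psi;M;\gamma-\epsilon_S)$. Taking $\Psi=M=\varnothing$, so that $K(\varnothing;\varnothing;\gamma)=g_\gamma$ by Remark~\ref{re:BMS}~(b), that lemma delivers both the vanishing for $d>\ell$ and the single-term identity $e_\ell^\perp g_\lambda=g_{\lambda-1^\ell}$ at once, since the only size-$\ell$ subset of $[\ell]$ is $[\ell]$ itself. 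What you flag as the ``main obstacle'' --- whether the coefficient of $h_\mu$ in $g_\lambda$ depends only on $\mu-\lambda$, and whether the shift to $\lambda-1^\ell$ creates or kills unwanted terms --- is precisely what this lemma packages; re-deriving it in the $h$-monomial basis is viable but forces you to track the position-dependent factors $k_m^{(r)}$ and the $h_0=1$ bookkeeping, which the formal identity at the level of $K(\Psi;M;\gamma)$ bypasses entirely. Your second route, via Hall-duality of $\{g_\nu\}$ with $\{G_\nu\}$ and the $K$-theoretic Pieri rule for $G_{1^m}G_\nu$, is a genuinely different and self-contained proof: inside an $\ell$-row shape $\lambda$, a vertical strip of size at least $\ell$ must be $1^\ell$, with Pieri coefficient $(-1)^{0}\binom{\ell-1}{0}=1$. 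It avoids the Katalan formalism completely, at the cost of importing the Pieri rule; the paper's choice is the more economical one here because Lemma~\ref{lem:eKatalan} is already in hand and is reused immediately in the proof of Theorem~\ref{thm:aim2}.
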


Theorem~\mref{thm:aim1} is exactly Conjecture~\mref{conj:aim}~\mref{it:aimalt} in the case of $k$ being a large enough positive integer.
Indeed, let $\lambda$ be a partition of length $\ell$ and $k\in\ZZ_{\geq 0}$ be large enough such that $\lambda_1+\ell-1\leq k$.
Then by \cite[Proposition~2.16~(d)]{BMS},
$$\fg{\lambda}{k}=g_\lambda,\quad \fg{\lambda-1^\ell}{k}=g_{\lambda-1^\ell}\, \text{ with }\, |\lambda| - |\lambda-1^\ell|-\ell = -2\ell.$$
Hence Theorem~\mref{thm:aim1} is exactly
\begin{equation*}
G_{1^\ell}^{\perp}\fg{\lambda}{k} = \fg{\lambda-1^\ell}{k} \,\text{ with }\,(-1)^{|\lambda| - |\lambda-1^\ell|-\ell}\cdot 1 = 1\in\ZZ_{\geq0}.
\end{equation*}

Denote by $$\spkl:=\{ \lambda\in\pkl\mid \lambda_x>\lambda_{x+1}, \forall x\in[\ell-1]\}$$
the set of strictly decreasing $k$-bounded partitions of length $\ell$, with the convention ${\rm sP}_0^k:={\rm P}_0^k:=\emptyset$.

\begin{theorem}
Let $\lambda\in\spkl$ and $m\in\ZZ_{\geq0}$. Then the coefficients $c_{\lambda\mu}$ in
\begin{equation*}
G_{1^m}^{\perp}\fg{\lambda}{k} = \sum_{\mu\in{\rm P}} c_{\lambda\mu} \fg{\mu}{k}
\end{equation*}
satisfy $(-1)^{|\lambda|-|\mu|-m}c_{\lambda\mu}\in\ZZ_{\geq0}$.
\mlabel{thm:aim2}
\end{theorem}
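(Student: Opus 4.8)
The plan is to convert the sign statement into a cohomology-vanishing statement, exploiting the realization of closed $k$-Schur Katalan functions as $K$-theoretic Euler characteristics of vector bundles on flag varieties, in the spirit of \cite{BMPS19,BMS} and \cite{Br,Ch,Pan,SW}. First I would make the dual Pieri operator explicit: since $G_{1^m}=\sum_{i\ge 0}(-1)^i\binom{m-1+i}{m-1}e_{m+i}$, the operator $G_{1^m}^{\perp}$ is adjoint to multiplication by $G_{1^m}$, which obeys Lenart's alternating column Pieri rule. Passing to the dual stable Grothendieck functions $g_\nu=\fg{\nu}{\infty}$ (Hall-dual to the $G_\nu$), this rule immediately gives the alternating dual Pieri property for the $g_\lambda$ themselves (the $k=\infty$ case), so the entire difficulty is the change of basis between the $g_\nu$ and the closed $k$-Schur Katalan functions $\fg{\mu}{k}$ (and the dual basis): one must show that $\fg{\lambda}{k}$ expands in the $g_\nu$ with coefficients of sign $(-1)^{|\lambda|-|\nu|}$, and, dually, that the basis dual to $\{\fg{\mu}{k}\}$ expands in the $G_\tau$ with sign $(-1)^{|\tau|-|\mu|}$. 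Granting these two sign-coherences, a direct computation using Lenart's rule and $\langle g_\nu,G_\tau\rangle=\delta_{\nu\tau}$ collapses $c_{\lambda\mu}$ to $(-1)^{|\lambda|-|\mu|-m}$ times a sum of products of nonnegative integers, which is the assertion; the global sign is pinned down by the $k\to\infty$, $m=\ell$ case, which is exactly Theorem~\ref{thm:aim1} ($G_{1^{\ell}}^{\perp}g_\lambda=g_{\lambda-1^{\ell}}$, parity $(-1)^{|\lambda|-|\lambda-1^{\ell}|-\ell}=1$).

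To obtain these sign-coherences I would invoke the raising-operator (Katalan) description of $\fg{\lambda}{k}$ attached to the root ideal $\dkl$: it realizes $\fg{\lambda}{k}$ as $\chi(X,\cale^{(k)}_\lambda)$ for an explicit vector bundle $\cale^{(k)}_\lambda$ on a (partial) flag variety $X$, built by iterated extensions dictated by $\dkl$ (for $k$ large this degenerates to $g_\lambda$, matching \cite[Prop.~2.16]{BMS}). Straightening the Katalan expression into the $g_\nu$ basis expresses each coefficient as an alternating sum of cohomology groups of $\cale^{(k)}_\lambda$ and its twists, and the claimed sign-coherence is precisely that this alternating sum is concentrated in a single cohomological degree. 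Here the hypothesis $\lambda\in\spkl$ is indispensable: strict decrease of the parts of $\lambda$ should translate exactly into a dominance (nef-ness) condition on the weights defining the filtration of $\cale^{(k)}_\lambda$, so that Borel--Weil--Bott, together with the cohomology-vanishing inputs of \cite{Br,Ch,Pan,SW}, forces $H^{>0}=0$, every surviving term becomes an honest $\dim H^0\ge 0$, and the parities line up. Running the same argument on the dual side yields the dual sign-coherence.

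The main obstacle will therefore be this vanishing step, and in particular the precise matching of the "strictly decreasing" combinatorics with a dominance condition on the bundle: for a general $k$-bounded $\lambda$ the relevant weights are not dominant, higher cohomology contributes with competing signs, and this is exactly why the full Blasiak--Morse--Seelinger conjecture remains open. Two auxiliary inputs help organize the bookkeeping: the $k$-Schur dual Pieri rule, which is manifestly nonnegative and controls the top-degree coefficients $|\mu|=|\lambda|-m$ (where the parity is $+1$), and Theorem~\ref{thm:aim1}, which fixes the normalization and supplies a base/consistency case; with these in hand the remaining coefficients follow by the cohomology computation above applied to the successive twists appearing in the straightening of $\fg{\lambda}{k}$.
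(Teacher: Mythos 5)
Your proposal takes a fundamentally different route from the paper, and it has genuine gaps that prevent it from being a proof.

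The core of your argument is to factor the change of basis through the dual stable Grothendieck functions: write $\fg{\lambda}{k}=\sum_\nu d_{\lambda\nu}g_\nu$, apply Lenart's alternating Pieri rule to $G_{1^m}^\perp g_\nu$, then re-expand in the $\fg{\mu}{k}$. To make the signs collapse you need two ``sign-coherences'': $(-1)^{|\lambda|-|\nu|}d_{\lambda\nu}\geq 0$ and, for the inverse change of basis, $(-1)^{|\tau|-|\mu|}e_{\tau\mu}\geq 0$. You explicitly say ``Granting these two sign-coherences \ldots'' and then appeal to a cohomology-vanishing/Borel--Weil--Bott argument to supply them. The problem is that these two claims are themselves (versions of) open parts of the Blasiak--Morse--Seelinger Conjecture~2.12: the $g$-alternating expansion of $\fg{\lambda}{k}$ and its dual statement. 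Neither is established by the present paper, nor by \cite{IIN}, nor by the vanishing theorems you cite from \cite{Br,Ch,Pan,SW} (those apply to the homogeneous Catalan-function setting of \cite{BMPS19}, not to the inhomogeneous $K$-theoretic Katalan functions). Moreover, no precise vector bundle $\mathcal{E}^{(k)}_\lambda$ with $\chi(X,\mathcal{E}^{(k)}_\lambda)=\fg{\lambda}{k}$ has been constructed, so the Euler-characteristic reading of the Katalan raising-operator formula is at this point a heuristic, not an input one can compute with. You are reducing the theorem to statements at least as hard as the theorem.

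A second, structural issue: your two sign-coherences, if true, would yield Conjecture~\ref{conj:aim}~\ref{it:aimalt} for \emph{all} $k$-bounded $\lambda$, not just strictly decreasing ones. Yet the theorem is only claimed (and only true by the methods of this paper) for $\lambda\in\spkl$; indeed Example~\ref{ex:Lg} shows the sign pattern of $L_z\fg{\lambda}{k}$ already fails when $\lambda_z=\lambda_{z+1}$. Your explanation that ``strict decrease $\Rightarrow$ dominance $\Rightarrow$ vanishing'' is not connected to the actual combinatorics. In the paper the strictly-decreasing hypothesis enters very concretely: $e_{m+i}^\perp$ is expanded via Lemma~\ref{lem:eKatalan} into lowering operators $\prod_{z\in S}L_z$, and each application of $L_z$ must land on a weight with $\lambda_z>\lambda_{z+1}$ in order for Proposition~\ref{prop:ind} (whose proof uses Theorem~\ref{thm:finalform}, Lemma~\ref{lem:indkey}, and the Mirror Lemmas) to produce a sign-coherent expansion $L_z\fg{\lambda}{k}=\sum_\mu a_{\lambda\mu}\fg{\mu}{k}$ with $a_{\lambda\mu}\in\{0,(-1)^{|\lambda|-|\mu|-1}\}$. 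Strict decrease of $\lambda$ guarantees exactly that this condition propagates under successive lowerings, because the processed $\mu$ agrees with $\lambda$ in positions $[z-1]$; no cohomological dominance is involved. To turn your sketch into a proof you would have to independently establish the two sign-coherences with a mechanism that genuinely uses $\lambda\in\spkl$, which is not indicated, or else abandon the factorization through $g_\nu$ and work directly with the Katalan straightening identities as the paper does.
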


\begin{theorem}
Let $\lambda\in\spkl$. Then the coefficients $a_{\lambda\mu}$ in
\begin{equation*}
\fg{\lambda}{k} = \sum_{\mu\in{\rm P}}a_{\lambda\mu}\fg{\mu}{k+1}
\end{equation*}
satisfy $(-1)^{|\lambda|-|\mu|} a_{\lambda\mu} \in\ZZ_{\geq0}$.
\mlabel{thm:aim3}
\end{theorem}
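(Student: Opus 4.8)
The plan is to obtain Theorem~\mref{thm:aim3} from the alternating dual Pieri rule of Theorem~\mref{thm:aim2}, by making precise the implication ``alternating dual Pieri rule $\Rightarrow$ $k$-branching'' recorded in~\cite[Remark~2.13]{BMS} and verifying that it remains valid once one restricts to strictly decreasing partitions. First I would set up the duality framework: with respect to the Hall pairing, the dual space of $\Lambda_{(k)}$ is a quotient of $\Lambda$, the inclusion $\Lambda_{(k)}\hookrightarrow\Lambda_{(k+1)}$ is adjoint to the associated projection of quotients, and in terms of the dual basis $\{\frakg^{\ast(k)}_\mu\}$ of $\{\fg{\mu}{k}\}$ the branching coefficient is $a_{\lambda\mu}=\langle\fg{\lambda}{k},\frakg^{\ast(k+1)}_\mu\rangle$, while the coefficients $c_{\lambda\mu}$ of Theorem~\mref{thm:aim2} encode the action of multiplication by $G_{1^m}$ on the dual side. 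Since $G_{1^m}^{\perp}$ is a single operator on $\Lambda$ restricting compatibly to every $\Lambda_{(j)}$, the dual-Pieri data at levels $k$ and $k+1$ can be played off against each other. Note that the top-degree part of the branching is exactly the $k$-branching of the $k$-Schur functions, so the new content is the inhomogeneous part, where the sign $(-1)^{|\lambda|-|\mu|}$ has to be produced.

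I would then realize the passage from level $k$ to level $k+1$ through a finite chain of dual-Pieri-type moves. Viewing $\fg{\lambda}{k}$ as the $K$-theoretic Catalan function attached to the root ideal $\Delta^{k}(\lambda)$, one has $\Delta^{k+1}(\lambda)\subseteq\Delta^{k}(\lambda)$ with a controlled set of extra roots; peeling these off one root at a time via the Catalan-function recursions that underlie the proof of Theorem~\mref{thm:aim2} rewrites $\fg{\lambda}{k}$ as a signed sum of level-$(k+1)$ Katalan functions, the sign introduced at each step being the one predicted by the alternating dual Pieri rule. At the level of the straightened basis this amounts to stripping columns off $\lambda$ using $G_{1^{\ell}}^{\perp}$ with $\ell$ the current length and iterating down to $\fg{\emptyset}{k}=1$ (the $k\to\infty$ instance of such a step being Theorem~\mref{thm:aim1}). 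I would run this as an induction on $|\lambda|$ over strictly decreasing partitions, using repeatedly that $G_{1^{\ell(\nu)}}^{\perp}$ sends a strictly decreasing $\nu$ to $\nu-1^{\ell(\nu)}$, which is again strictly decreasing, so that Theorem~\mref{thm:aim2} is genuinely applicable at every stage and the signs accumulated along the chain telescope to $(-1)^{|\lambda|-|\mu|}$.

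The main obstacle is exactly this compatibility between the reduction and the hypothesis that $\lambda$ is strictly decreasing: one must check that \emph{every} partition occurring at an intermediate stage---not merely the leading ones, but also those arising from the lower-degree $K$-theoretic corrections $-\binom{m}{m-1}e_{m+1}+\cdots$ inside $G_{1^m}$ and from the inhomogeneous tails of the $\fg{\mu}{k}$---again lies in the strictly decreasing class, so that Theorem~\mref{thm:aim2} may legitimately be quoted there, and that the various alternating signs combine without cancellation. I expect this to require the explicit combinatorial form of the dual Pieri rule that emerges from the proof of Theorem~\mref{thm:aim2}, so as to pin down precisely which $\mu$ satisfy $c_{\lambda\mu}\neq0$ when $\lambda$ is strictly decreasing. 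If the intermediate partitions do escape the strictly decreasing class, the fallback is to isolate exactly which dual-Pieri coefficients are actually needed and to establish their signs directly, by the Euler-characteristic and vanishing-theorem computations on flag varieties used for Theorem~\mref{thm:aim2}, rather than invoking that theorem as a black box.
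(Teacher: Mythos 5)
Your proposal correctly identifies the general strategy (derive the $k$-branching statement from the alternating dual Pieri rule, which is exactly the implication recorded in~\cite[Remark~2.13]{BMS}), but it misses the one concrete ingredient that makes the paper's proof a single line and that would also collapse all of your proposed machinery: the \emph{shift invariance}
\[
G_{1^\ell}^{\perp}\,\fg{\lambda+1^\ell}{k+1}=\fg{\lambda}{k}
\]
from \cite[Proposition~2.16~(c)]{BMS}. With this identity in hand, the passage from level $k$ to level $k+1$ is not a chain of root-peeling moves or an induction on $|\lambda|$: one simply observes that if $\lambda\in\spkl$ then $\lambda+1^\ell$ is a strictly decreasing $(k+1)$-bounded partition, applies Theorem~\mref{thm:aim2} with $m=\ell$ to the left-hand side, and checks that the sign $(-1)^{|\lambda+1^\ell|-|\mu|-\ell}=(-1)^{|\lambda|-|\mu|}$ comes out as required. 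There is no issue of intermediate partitions leaving the strictly decreasing class because there are no intermediate steps.

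Beyond the omission, two further points in your sketch are shaky. First, you propose iterating $G_{1^{\ell(\nu)}}^{\perp}$ on the grounds that it sends a strictly decreasing $\nu$ to $\nu-1^{\ell(\nu)}$; that is true for the determinant $g_\nu$ (Theorem~\mref{thm:aim1}, the $k\to\infty$ case) but \emph{not} for $\fg{\nu}{k}$ at finite $k$, where by Theorem~\mref{thm:aim2} the image is a genuinely signed \emph{sum}. So the clean telescoping of signs you envision does not happen, and the control of which $\mu$ appear is exactly the content you flag as the "main obstacle." Second, the duality/Hall-pairing reformulation in your first paragraph is a reasonable heuristic explanation of why the implication ought to hold, but it does not by itself produce a proof: one still needs to exhibit a level-$(k+1)$ expression for $\fg{\lambda}{k}$ to which Theorem~\mref{thm:aim2} applies, and that is precisely what shift invariance supplies. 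In short, your proposal is circling the right idea but never lands on the decisive identity, and the substitute argument you outline has a gap you yourself identify and do not close.
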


{\bf Outline.} In Section~\mref{sec:equiv}, we first recall some background notions and results on Katalan functions and their combinatorial interpretations, especially the concept of closed $k$-Schur Katalan function as a particular case, lowering operators and the Mirror Lemma.
Then, we prove a new Mirror lemma (Lemma~\mref{lem:mirr2}) and a conclusion of Katalan functions for later use (Proposition~\mref{prop:multin}).
Section~\mref{sec:lower} is devoted to give an explicit study of the action of lowering operators $L_z$ on closed $k$-Schur Katalan functions $\fg{\lambda}{k}$ (Theorem~\mref{thm:finalform}).
In Section~\mref{sec:comproof}, we provide in detail the proofs of the above mentioned
Theorems~\mref{thm:aim1},~\mref{thm:aim2} and~\mref{thm:aim3}, respectively. The most significant result of this paper is Theorems~\mref{thm:aim2}, whose proof requires Theorem~\mref{thm:finalform} derived in Section~\mref{sec:lower}.

\vskip 0.1in

{\bf Notations.}
We fix some notations used throughout the paper.
\begin{enumerate}
\item Write $\ZZ$ to denote the set of all integers.

\item For two nonnegative integers $a\leq b$, let $[a,b]:=\{ a, a+1,\ldots,b \}$ and $[b]:=[1,b]$ with the convention $[0]:=\emptyset$.

\item Denote by $\epsilon_{i}$ the sequence of length $\ell$ with a $1$ in position $i$ and $0$'s elsewhere. Set $\varepsilon_{\alpha}:=\epsilon_{i}-\epsilon_{j}$ with $\alpha=(i,j)$.

\item
Let ${\rm P}$ be the set of partitions and ${\rm P}^{k}$ (resp. ${\rm P}_{\ell}^{k}$) be the set of $k$-bounded partitions (resp.  $k$-bounded partitions of length $\ell\in\mathbb{Z}_{\geq0}$) for $k\in \ZZ_{\geq 0}$.

\item For $\gamma := (\gamma_1,\ldots,\gamma_\ell)\in\ZZ^\ell$ with $\ell\in\ZZ_{\geq 0}$, define the {\bf size} of $\gamma$ by $|\gamma|:= \sum_{i=1}^\ell\gamma_i$.


\item For $d\in \ZZ_{\geq 0}$, denote by $e_d$ (resp. $h_d$) the elementary (resp. complete homogeneous) symmetric function.
\end{enumerate}

\section{Katalan functions and a new Mirror Lemma}
\mlabel{sec:equiv}
In this section, we first review some basic notations and facts of Katalan functions, including the concept of closed $k$-Schur Katalan function as a particular case. Then, we prove a new Mirror Lemma as a key result for later proofs of the main results in this paper.

\subsection{Katalan functions and closed $k$-Schur Katalan functions}\mlabel{ss:Katalan}
This subsection is devoted to recall the concepts of Katalan function and closed $k$-Schur Katalan function.

For $\ell\in\ZZ_{\geq 0}$, consider the set $$\Delta_{\ell}^{+}:=\{(i,j) \mid 1\le i<j\le\ell \},$$
with the convention $\Delta_{0}^{+}:=\Delta_{1}^{+} := \emptyset$.
A $\textbf{root ideal}$ $\Psi$ is an upper order ideal of the poset $\Delta_{\ell}^{+}$ with partial order given by $(a,b)\le(c,d)$ if $a\ge c$ and $b\le d$. Namely, if $(a,b)\in\Psi$, then $(c,d)\in\Psi$ for $(c,d)\in\Delta_{\ell}^{+}$ with $(c,d)\ge(a,b)$.
Let $m\in\mathbb{Z}$ and $r\in\mathbb{Z}_{\geq 0}$.
Recall $h_m$ is the complete homogeneous symmetric function, and denote by $k_{m}^{(r)}$ the
inhomogeneous version of $h_m$ given by
\begin{equation*}
    k_{m}^{(r)} :=\sum_{i=0}^{m}\binom{r+i-1}{i}h_{m-i}.
\end{equation*}
Notice that $h_{m}$ is the monomial of highest degree of $k_{m}^{(r)}$.
Here we use the convention that
$$k_{m}^{(r)}=0\,\text{ for }\,  m<0 \,\text{ and }\, k_{m}^{(0)}=h_{m}.$$
For $\gamma\in\mathbb{Z}^{\ell}$, define $$g_{\gamma}:=\text{det}(k_{\gamma_{i}+j-i}^{(i-1)})_{1\le i,j\le\ell}.$$
If $\gamma$ is a partition, $g_{\gamma}$ is called the {\bf dual stable Grothendieck polynomial} studied in~\cite{Las01,Le}.
The following is the concept of Katalan function.

\begin{defn}(\cite[Definition~2.1]{BMS})
For a root ideal $\Psi\subseteq\Delta_{\ell}^{+}$, a multiset $M$ with $\text{supp}(M)\subseteq [\ell]$ and $\gamma\in\mathbb{Z}^{\ell}$, define the $\textbf{Katalan function}$
\begin{equation*}
K(\Psi;M;\gamma):=\prod_{z\in M}(1-L_{z})\prod_{(i,j)\in\Psi}(1-R_{ij})^{-1}g_{\gamma},
\end{equation*}
where $L_{z}$ is a $\textbf{lowering operator}$ acting on $g_{\gamma}$ by $L_{z}g_{\gamma}:=g_{\gamma-\epsilon_{z}}$ and $R_{ij}$ is an operator acting on $g_{\gamma}$ by $R_{ij}g_{\gamma}:=g_{\gamma+\epsilon_{i}-\epsilon_{j}}$.
\mlabel{defn:KataFunc}
\end{defn}

We collect here some simple but useful facts.

\begin{remark}
Let $\Psi\subseteq\Delta_{\ell}^{+}$ be a root ideal, $M$ a multiset with $\text{supp}(M)\subseteq [\ell]$ and $\gamma\in\mathbb{Z}^{\ell}$.
\begin{enumerate}
\item By~\cite[p.18]{BMS},
$$L_z K(\Psi;M;\gamma)=K(\Psi;M;\gamma-\epsilon_z), \quad \forall z\in[\ell].$$

\item From~\cite[Proposition~2.3~(b)]{BMS}, $$K(\varnothing;\varnothing;\gamma) = g_\gamma.$$

\item
In terms of~\cite[(2.3)]{BMS},
\begin{equation*}
g_{\gamma}=\prod_{1\le i<j\le\ell}(1-R_{ij})k_{\gamma}, \,\text{ where } \, k_{\gamma}:= k_{\gamma_{1}}^{(0)} \cdots k_{\gamma_{\ell}}^{(\ell-1)}.
\end{equation*}
So a Katalan function has another expression
\begin{equation*}
K(\Psi;M;\gamma)=\prod_{z\in M}(1-L_{z})\prod_{(i,j)\in\Delta_{\ell}^{+}\setminus\Psi}(1-R_{ij})k_{\gamma},
\end{equation*}
where $L_{z}k_{\gamma}=k_{\gamma-\epsilon_{z}}$ and $R_{ij}k_{\gamma}=k_{\gamma+\epsilon_{i}-\epsilon_{j}}$. Expanding the right hand side of the above equation, $K(\Psi;M;\gamma)$ is a linear summation of some $k_\mu$'s with $\mu\in \ZZ^{\ell}$.

\end{enumerate}
\mlabel{re:BMS}
\end{remark}

Given a multiset $M$ on $[\ell]$ and $a\in M$, denote by $m_{M}(a)$ the number of occurrences of $a$ in $M$. A Katalan function $K(\Psi;M;\gamma)$ can be represented by the $\ell\times\ell$ grid of boxes (labelled by matrix-style coordinates), with the boxes of $\Psi$ shaded, $m_{M}(a)$ $\bullet$'s in column $a$ and the entries of $\gamma$ on the diagonal of these boxes. Let us expose an example for illustration.

\begin{exam}
For $\ell=4$, $\Psi=\{ (1,3),(1,4),(2,4) \}$, $M=\{2,3,4,4\}$ and $\gamma=(3,2,1,3)$, we represent the related Katalan function by
\begin{equation*}
K(\Psi;M;\gamma)=
\begin{tikzpicture}[scale=.4,line width=0.5pt,baseline=(a.base)]
\draw (0,2) rectangle (1,1);\node at(0.5,1.5){\scriptsize\( 3 \)};
\draw (1,2) rectangle (2,1);\node at(1.5,1.5){\scriptsize\( \bullet \)};
\filldraw[red,draw=black] (2,2) rectangle (3,1);\node at(2.5,1.5){\scriptsize\( \bullet \)};
\filldraw[red,draw=black] (3,2) rectangle (4,1);\node at(3.5,1.5){\scriptsize\( \bullet \)};
\draw (0,1) rectangle (1,0);
\draw (1,1) rectangle (2,0);\node at(1.5,0.5){\scriptsize\( 2 \)};
\draw (2,1) rectangle (3,0);
\filldraw[red,draw=black] (3,1) rectangle (4,0);\node at(3.5,0.5){\scriptsize\( \bullet \)};
\node (a) [align=center] {\\[-5pt] };
\draw (0,0) rectangle (1,-1);
\draw (1,0) rectangle (2,-1);
\draw (2,0) rectangle (3,-1);\node at(2.5,-0.5){\scriptsize\( 1 \)};
\draw (3,0) rectangle (4,-1);
\draw (0,-1) rectangle (1,-2);
\draw (1,-1) rectangle (2,-2);
\draw (2,-1) rectangle (3,-2);
\draw (3,-1) rectangle (4,-2);\node at(3.5,-1.5){\scriptsize\( 3 \)};
\end{tikzpicture}.
\end{equation*}
\end{exam}

For $\lambda\in\text{P}_{\ell}^{k}$, define a root ideal
\begin{equation}
	\Delta^{k}(\lambda):=\{(i,j)\in\Delta_{\ell}^{+} \mid k-\lambda_{i}+i<j \} \subseteq \Delta_{\ell}^{+}.
	\mlabel{eq:dkl}
\end{equation}
For $\mathcal{R}\subseteq\Delta_{\ell}^{+}$, denote by
\begin{equation}
L(\mathcal{R}):=\bigsqcup_{(i,j)\in\mathcal{R}}\{j\}
\mlabel{eq:Lroot}
\end{equation}
the multiset of second components of elements in $\mathcal{R}$.
In particular, if $\mathcal{R}$ is a root ideal, we denote $K(\Psi;\mathcal{R};\gamma):= K(\Psi;L(\mathcal{R});\gamma)$.
Now we come to the main concept studied in the present paper.

\begin{defn}(\cite[Definition 2.11]{BMS})
For $\lambda\in\text{P}_{\ell}^{k}$, the {\bf closed $k$-Schur Katalan function} is
\begin{equation*}
	\tilde{\mathfrak{g}}_{\lambda}^{(k)} :=K(\Delta^{k}(\lambda);\Delta^{k}(\lambda);\lambda).
\end{equation*}
\mlabel{defn:cKataFunc}
\end{defn}

\subsection{A new Mirror Lemma}
This subsection is devoted to prove a new Mirror Lemma for later uses.
For this, let us review some necessary concepts and the Mirror Lemma of Katalan functions~\cite{BMPS19,BMS}.

Let $\Psi\subseteq\Delta_{\ell}^{+}$ be a root ideal and $x\in[\ell]$.
\begin{enumerate}
\item A root $\alpha\in\Psi$ is called {\bf removable} if $\Psi\setminus \alpha$ is still a root ideal; a root $\alpha\in\Delta_{\ell}^{+}\setminus\Psi$ is called {\bf addable} if $\Psi\cup\alpha$ is also a root ideal.

\item If there exists $j\in[\ell]$ such that $(x,j)$ is removable in $\Psi$, we denote $\text{down}_{\Psi}(x):=j$, else we say that $\text{down}_{\Psi}(x)$ is undefined; if there exists $j\in[\ell]$ such that $(j,x)$ is removable in $\Psi$, we denote $\text{up}_{\Psi}(x):=j$, else we say that $\text{up}_{\Psi}(x)$ is undefined.

\item The {\bf bounce graph} of $\Psi$ is a graph on the vertex set $[\ell]$ with edges $(x,\text{down}_{\Psi}(x))$ for each $x\in[\ell]$ such that $\text{down}_{\Psi}(x)$ is defined. The bounce
    graph of $\Psi$ is a disjoint union of paths called {\bf bounce paths} of $\Psi$.

\item Denote by $\text{top}_{\Psi}(x)$ (resp. $\text{bot}_{\Psi}(x)$) the minimum (resp. maximum) vertex, as a number in $[\ell]$, in the unique bounce path of $\Psi$ containing $x$.

\item For $a,b\in[\ell]$ in the same bounce path of $\Psi$ with $a\le b$, define
\begin{align*}
\text{path}_{\Psi}(a,b):=(a,\text{down}_{\Psi}(a),
\text{down}_{\Psi}^{2}(a),\ldots,b),
\end{align*}
and $|\text{path}_{\Psi}(a,b)|$ to be the length of $\text{path}_{\Psi}(a,b)$.
Denote $\text{uppath}_{\Psi}(x):=\text{path}_{\Psi}(\text{top}_{\Psi}(x),x)$ for $x\in[\ell]$.
\end{enumerate}

\begin{defn}
A root ideal $\Psi\subseteq\Delta_{\ell}^{+}$ is said to have
\begin{enumerate}
\item a {\bf wall} in rows $r, r+1$ if rows $r, r+1$ have the same length;

\item a {\bf ceiling} in columns $c, c+1$ if columns $c, c+1$ have the same length;

\item a {\bf mirror} in rows $r, r+1$ if $\Psi$ has removable roots $(r,c)$ and $(r+1,c+1)$ for some $c\in[r+2,\ell-1]$;

\item the {\bf bottom} in row $\bott$ of $\Psi$ if $(\bott,p)\in\Psi$ for some $p\in [\ell]$ and $(\bott+1,q)\notin\Psi$ for all $q\in [\ell]$.
\end{enumerate}
\end{defn}

\begin{lemma}(\cite[Lemma 3.3]{BMS})
Let $\Psi\subseteq\Delta_{\ell}^{+}$ be a root ideal, $M$ a multiset with \textup{supp}$(M)\subseteq [\ell]$ and $\gamma\in\mathbb{Z}^{\ell}$. Let $z\in[\ell]$ such that
\begin{enumerate}
\item $\Psi$ has a ceiling in columns $z,z+1$;

\item $\Psi$ has a wall in rows $z,z+1$;

\item $m_M(z)+1 = m_M(z+1)$.
\end{enumerate}
Then
\begin{equation*}
K(\Psi;M;\gamma) + K(\Psi;M;s_z\gamma-\epsilon_z+\epsilon_{z+1}) = 0,
\end{equation*}
where $s_z$ is a simple permutation in the symmetric group $S_\ell$ of degree $\ell$ with
$$s_z\gamma := (\gamma_1,\ldots,\gamma_{z-1},\gamma_{z+1},\gamma_z,\gamma_{z+2},\ldots,
\gamma_\ell).$$
\mlabel{lem:K+K=0}
\end{lemma}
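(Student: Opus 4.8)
\smallskip
\noindent\textbf{Sketch of proof.}
The plan is to transport the whole identity to the basis of products of complete homogeneous symmetric functions, where the superscript twist in $k_{m}^{(r)}$ becomes a power of the elementary lowering operator and the required cancellation comes from the single raising operator $R_{z,z+1}$. For the reduction step, write $L_{i}$ for the operator lowering the $i$-th entry of an index vector, note that the defining formula $k_{m}^{(r)}=\sum_{i\ge 0}\binom{r+i-1}{i}h_{m-i}$ reads $k_{m}^{(r)}=(1-L)^{-r}h_{m}$ after a binomial expansion, and combine this with $k_{\gamma}=k_{\gamma_{1}}^{(0)}\cdots k_{\gamma_{\ell}}^{(\ell-1)}$, with $g_{\gamma}=\prod_{1\le i<j\le\ell}(1-R_{ij})\,k_{\gamma}$ from Remark~\ref{re:BMS}, and with the fact that all the $L_{i}$ and $R_{ij}$ commute. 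One obtains, for every $\gamma\in\ZZ^{\ell}$,
\[
K(\Psi;M;\gamma)=\Big(\prod_{(i,j)\in\Delta_{\ell}^{+}\setminus\Psi}(1-R_{ij})\Big)\Big(\prod_{i=1}^{\ell}(1-L_{i})^{\,m_{M}(i)-(i-1)}\Big)h_{\gamma},
\]
where a word in the $L_{i}$'s and $R_{ij}$'s acts on the index vector and each resulting vector $\mu$ is sent at the very end to $h_{\mu}:=h_{\mu_{1}}\cdots h_{\mu_{\ell}}$. (If an exponent is negative the corresponding factor is an infinite but locally finite series; one works in the evident completion.)

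Next set $\tilde\gamma:=s_{z}\gamma-\epsilon_{z}+\epsilon_{z+1}$, so that $\tilde\gamma_{z}=\gamma_{z+1}-1$, $\tilde\gamma_{z+1}=\gamma_{z}+1$ and $\tilde\gamma_{i}=\gamma_{i}$ otherwise, and let $\tau$ be the involution $\mu\mapsto s_{z}\mu-\epsilon_{z}+\epsilon_{z+1}$ on index vectors. A direct check gives $\tau L_{z}\tau=L_{z+1}$, $\tau L_{z+1}\tau=L_{z}$ and $\tau L_{i}\tau=L_{i}$ for $i\ne z,z+1$; hence hypothesis~(c), which says precisely that the exponents $m_{M}(z)-(z-1)$ and $m_{M}(z+1)-z$ coincide, forces $\tau$ to commute with $\prod_{i}(1-L_{i})^{\,m_{M}(i)-(i-1)}$. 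Consequently $K(\Psi;M;\tilde\gamma)$ arises from the displayed formula upon replacing $h_{\gamma}$ by $\tau h_{\gamma}$, so that
\[
K(\Psi;M;\gamma)+K(\Psi;M;\tilde\gamma)=\Big(\prod_{(i,j)\in\Delta_{\ell}^{+}\setminus\Psi}(1-R_{ij})\Big)(1+\tau)\,x,\qquad x:=\Big(\prod_{i=1}^{\ell}(1-L_{i})^{\,m_{M}(i)-(i-1)}\Big)h_{\gamma}.
\]

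The heart of the matter is that hypotheses~(a) and~(b) make the remaining raising operators $\tau$-symmetric. Since $\tau R_{ij}\tau=R_{s_{z}(i),s_{z}(j)}$ (with $R_{ba}:=R_{ab}^{-1}$ for $b>a$), one checks from the definition of a root ideal that the ceiling in columns $z,z+1$ forces $(z,z+1)\notin\Psi$ and forces columns $z$ and $z+1$ to occupy the same rows of $\Psi$, whereas the wall in rows $z,z+1$ forces rows $z$ and $z+1$ to begin in the same column; it follows that $s_{z}$ maps the root set $(\Delta_{\ell}^{+}\setminus\Psi)\setminus\{(z,z+1)\}$ onto itself, so that $\mathcal{Q}:=\prod_{(i,j)\in(\Delta_{\ell}^{+}\setminus\Psi)\setminus\{(z,z+1)\}}(1-R_{ij})$ commutes with $\tau$. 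Therefore $K(\Psi;M;\gamma)+K(\Psi;M;\tilde\gamma)=(1-R_{z,z+1})(1+\tau)\,\mathcal{Q}\,x$, and it remains to observe that $(1-R_{z,z+1})(1+\tau)$ annihilates each index vector once the $h$'s are formed: from a vector $\mu$ it produces $h_{\mu}-h_{\mu+\epsilon_{z}-\epsilon_{z+1}}+h_{s_{z}\mu-\epsilon_{z}+\epsilon_{z+1}}-h_{s_{z}\mu}$, which vanishes because $h_{s_{z}\mu}=h_{\mu}$ and $s_{z}\mu-\epsilon_{z}+\epsilon_{z+1}$ is $\mu+\epsilon_{z}-\epsilon_{z+1}$ with the entries in positions $z$ and $z+1$ transposed.

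The step I expect to be the main obstacle is the root-ideal combinatorics of the previous paragraph: one has to turn the wall and ceiling hypotheses into the exact statements $(z,z+1)\notin\Psi$ and the $s_{z}$-stability of $(\Delta_{\ell}^{+}\setminus\Psi)\setminus\{(z,z+1)\}$, being aware that dropping either hypothesis lets the operators $R_{i,z}$, $R_{z,j}$, $R_{i,z+1}$, $R_{z+1,j}$ genuinely break the symmetry, so that $K(\Psi;M;\gamma)+K(\Psi;M;\tilde\gamma)$ need not vanish then. A secondary, purely technical matter is to keep track throughout that the $R_{ij}$ and $L_{i}$ act on the index lattice $\ZZ^{\ell}$, the cancellation becoming visible only after the passage $\mu\mapsto h_{\mu}$ to symmetric functions.
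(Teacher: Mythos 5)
The paper itself does not prove this lemma; it is quoted from \cite[Lemma 3.3]{BMS}, so there is no in-paper proof to compare against. I therefore assess the proposal on its own.

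Your argument is correct, and it is a clean reconstruction of the standard straightening argument for such identities. The key steps all check out. First, the reformulation
\[
K(\Psi;M;\gamma)=\Big(\prod_{(i,j)\in\Delta_{\ell}^{+}\setminus\Psi}(1-R_{ij})\Big)\Big(\prod_{i=1}^{\ell}(1-L_{i})^{\,m_{M}(i)-(i-1)}\Big)h_{\gamma}
\]
follows from Remark~\ref{re:BMS}(c) and the identity $k_m^{(r)}=(1-L)^{-r}h_m$, provided one regards everything formally in a completion of the free module on $\ZZ^\ell$ and passes to $\Lambda$ at the very end (as you indicate); the sums are locally finite because the $L_i$ strictly decrease degree and the $R_{ij}$ preserve it, so only finitely many index vectors with all entries nonnegative occur. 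Second, the conjugation identities $\tau L_{z}\tau=L_{z+1}$, $\tau L_{z+1}\tau=L_{z}$, $\tau L_{i}\tau=L_i$ (for $i\neq z,z+1$), and $\tau R_{ij}\tau=R_{s_z(i),s_z(j)}$ are all straightforward to verify directly on $\ZZ^\ell$, and hypothesis~(c) makes the exponents at positions $z$ and $z+1$ equal, so $\tau$ commutes with the lowering block. Third, your combinatorial claim about $\Psi$ is right: the ceiling hypothesis forces $(z,z+1)\notin\Psi$ (column $z+1$ of $\Psi$ would have length $\geq z$, exceeding the $\leq z-1$ possible for column $z$) and forces the columns $z$ and $z+1$ to be row-by-row identical; the wall hypothesis forces the rows $z$ and $z+1$ to be column-by-column identical; and a case check shows $s_z$ stabilizes $(\Delta_\ell^+\setminus\Psi)\setminus\{(z,z+1)\}$, always sending a pair $(i,j)$ with $i<j$ to a pair with $s_z(i)<s_z(j)$, so $\mathcal{Q}$ is $\tau$-invariant. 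Finally, for each $\mu$ the expression $(1-R_{z,z+1})(1+\tau)$ produces $h_\mu-h_{\mu+\epsilon_z-\epsilon_{z+1}}+h_{s_z\mu-\epsilon_z+\epsilon_{z+1}}-h_{s_z\mu}$, and the two pairs cancel in $\Lambda$ because $h_\nu$ only depends on $\nu$ up to permutation of its entries. The proof is complete and correct; I have nothing to add beyond noting that since the underlying arithmetic is carried out in the formal module rather than in $\Lambda$, you were right to flag and handle the completion and the evaluation map $h_\mu\mapsto h_{\mu_1}\cdots h_{\mu_\ell}$ with care, since that map is far from injective and the cancellation genuinely lives in $\Lambda$.
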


\begin{lemma}(\cite[Proposition 3.9]{BMS})
Let $\Psi\subseteq\Delta_{\ell}^{+}$ be a root ideal, $M$ a multiset with \textup{supp}$(M)\subseteq [\ell]$ and $\gamma\in\mathbb{Z}^{\ell}$. Then
\begin{enumerate}
\item for any removable root $\beta$ of $\Psi$,
\begin{equation*}
K(\Psi;M;\gamma)=K(\Psi\setminus\beta;M;\gamma)+K(\Psi;M;\gamma+\varepsilon_{\beta});
\end{equation*}
\mlabel{it:relk1}

\item for any addable root $\alpha$ of $\Psi$,
\begin{equation*}
K(\Psi;M;\gamma)=K(\Psi\cup\alpha;M;\gamma)
-K(\Psi\cup\alpha;M;\gamma+\varepsilon_{\alpha});
\end{equation*}
\mlabel{it:relk2}

\item for any $m\in M$,
\begin{equation*}
K(\Psi;M;\gamma)=K(\Psi;M\setminus m;\gamma)-K(\Psi;M\setminus m;\gamma-\epsilon_{m}),
\end{equation*}
where $M\setminus m$ means removing only one element $m$ from $M$;
\mlabel{it:relk3}

\item for any $m\in[\ell]$,
\begin{equation*}
K(\Psi;M;\gamma)=K(\Psi;M\sqcup m;\gamma)+K(\Psi;M;\gamma-\epsilon_{m}).
\end{equation*}
\mlabel{it:relk4}
\end{enumerate}
\mlabel{lem:relk}
\end{lemma}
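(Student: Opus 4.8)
The plan is to deduce all four identities from the raising/lowering operator formalism underlying Definition~\mref{defn:KataFunc}, working throughout with the finite expansion recorded in Remark~\mref{re:BMS}(c): for any root ideal $\Omega\subseteq\Delta_{\ell}^{+}$ and any multiset $N$ on $[\ell]$,
\[
K(\Omega;N;\gamma)=\prod_{z\in N}(1-L_{z})\prod_{(i,j)\in\Delta_{\ell}^{+}\setminus\Omega}(1-R_{ij})\,k_{\gamma},
\]
which is a \emph{finite} $\QQ$-linear combination of the $k_{\mu}$'s. Here one reads $L_{z}$ and $R_{ij}$ as operators on the free module with basis the symbols $\gamma\in\ZZ^{\ell}$ (Katalan functions being recovered via $\gamma\mapsto k_{\gamma}$); since these operators act by translating the exponent vector they commute with one another, and so each of~(a)--(d) falls out by factoring a single operator $(1-R_{\beta})$, $(1-R_{\alpha})$ or $(1-L_{m})$ out of one of the two products and expanding $1-X$. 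I do not expect a genuine obstacle --- the argument is essentially bookkeeping --- but two points must be handled with care: \emph{removing} a box from $\Psi$ \emph{adds} it to the complement $\Delta_{\ell}^{+}\setminus\Psi$ (and conversely), so the factored $(1-R)$-term sits on whichever side carries that box in its complement; and removability of $\beta$ (resp.\ addability of $\alpha$) is precisely the condition ensuring that every term on the right-hand side of~(a) (resp.~(b)) is a legitimate Katalan function.

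For~(a): since $\beta$ is removable, $\Psi\setminus\beta$ is again a root ideal, with complement $(\Delta_{\ell}^{+}\setminus\Psi)\sqcup\{\beta\}$; hence the product defining $K(\Psi\setminus\beta;M;\gamma)$ carries the extra factor $(1-R_{\beta})$. Expanding it, commuting $R_{\beta}$ past the remaining (commuting) operators, and applying $R_{\beta}k_{\gamma}=k_{\gamma+\varepsilon_{\beta}}$ gives
\[
K(\Psi\setminus\beta;M;\gamma)=K(\Psi;M;\gamma)-K(\Psi;M;\gamma+\varepsilon_{\beta}),
\]
which rearranges to~(a). For~(b): addability of $\alpha$ says $\alpha\in\Delta_{\ell}^{+}\setminus\Psi$ and $\Psi\cup\alpha$ is a root ideal, so the product $\prod_{(i,j)\in\Delta_{\ell}^{+}\setminus\Psi}(1-R_{ij})$ appearing in $K(\Psi;M;\gamma)$ \emph{itself} factors as $(1-R_{\alpha})\prod_{(i,j)\in\Delta_{\ell}^{+}\setminus(\Psi\cup\alpha)}(1-R_{ij})$; expanding that $(1-R_{\alpha})$ and applying $R_{\alpha}k_{\gamma}=k_{\gamma+\varepsilon_{\alpha}}$ yields $K(\Psi;M;\gamma)=K(\Psi\cup\alpha;M;\gamma)-K(\Psi\cup\alpha;M;\gamma+\varepsilon_{\alpha})$, which is~(b).

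Parts~(c) and~(d) are the same manipulation with lowering operators, the root ideal now fixed. For $m\in M$ I would factor $\prod_{z\in M}(1-L_{z})=(1-L_{m})\prod_{z\in M\setminus m}(1-L_{z})$ inside $K(\Psi;M;\gamma)$, expand, and use $L_{m}k_{\gamma}=k_{\gamma-\epsilon_{m}}$ to get $K(\Psi;M;\gamma)=K(\Psi;M\setminus m;\gamma)-K(\Psi;M\setminus m;\gamma-\epsilon_{m})$, namely~(c). For arbitrary $m\in[\ell]$ the identical computation, now started from $K(\Psi;M\sqcup m;\gamma)=(1-L_{m})\prod_{z\in M}(1-L_{z})\prod_{(i,j)\in\Delta_{\ell}^{+}\setminus\Psi}(1-R_{ij})\,k_{\gamma}$, gives $K(\Psi;M\sqcup m;\gamma)=K(\Psi;M;\gamma)-K(\Psi;M;\gamma-\epsilon_{m})$, which rearranges to~(d). (Should one prefer to keep the original defining product $\prod_{(i,j)\in\Psi}(1-R_{ij})^{-1}$ inside a suitable completion, the one expansion step above is replaced by the geometric-series identity $(1-R_{\beta})^{-1}=1+R_{\beta}(1-R_{\beta})^{-1}$ for~(a) and its analogue for~(b); the finite version above sidesteps completions altogether.)
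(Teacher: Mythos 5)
Your proof is correct, and since the paper cites this lemma directly from \cite[Proposition~3.9]{BMS} without reproducing a proof, there is no in-paper argument to compare against; what you give is the standard operator-manipulation proof (factor out the single $(1-R_{\beta})$, $(1-R_{\alpha})$, or $(1-L_m)$ from the commuting product in the finite expansion of Remark~\mref{re:BMS}(c), expand, and translate the subscript of $k_{\gamma}$), which is precisely the argument one expects behind the cited proposition. The one point worth making explicit, which you note correctly, is that commutativity of the $L_z$'s and $R_{ij}$'s — all of them being translations of the exponent vector $\gamma$ — is what licenses pulling the relevant $(1-X)$ factor to the front before expanding, and that removability of $\beta$ (resp.\ addability of $\alpha$) is exactly what guarantees that $\Psi\setminus\beta$ (resp.\ $\Psi\cup\alpha$) is again a root ideal, so that every Katalan function appearing on the right-hand side is well defined.
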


\begin{lemma}({\rm Mirror Lemma}, \cite[Lemma 4.6]{BMS})
Let $\Psi\subseteq\Delta_{\ell}^{+}$ be a root ideal, $M$ a multiset with \textup{supp}$(M)\subseteq [\ell]$ and $\gamma\in\mathbb{Z}^{\ell}$. Let $1\le y\le z<\ell$ be indices in a same bounce path  of $\Psi$ satisfying
\begin{enumerate}
\item $\Psi$ has a ceiling in columns $y,y+1$;
\item $\Psi$ has a mirror in rows $x,x+1$ for each $x\in\textup{path}_{\Psi}(y,\textup{up}_{\Psi}(z))$;
\item $\Psi$ has a wall in rows $z,z+1$;
\item $\gamma_{x}=\gamma_{x+1}$ for all $x\in\textup{path}_{\Psi}(y,\textup{up}_{\Psi}(z))$;
\item $\gamma_{z}+1=\gamma_{z+1}$;
\item $m_{M}(x)+1=m_{M}(x+1)$ for all $x\in\textup{path}_{\Psi}(\textup{down}_{\Psi}(y),z)$.
\end{enumerate}
If $m_{M}(y)+1=m_{M}(y+1)$, then $K(\Psi;M;\gamma)=0$; if $m_{M}(y)=m_{M}(y+1)$, then $K(\Psi;M;\gamma)=K(\Psi;M;\gamma-\epsilon_{z+1})$.
\mlabel{lem:mirr1}
\end{lemma}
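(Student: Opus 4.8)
The plan is to prove the two alternatives simultaneously by induction on $n:=|\textup{path}_{\Psi}(y,z)|$, the number of vertices on the bounce path running from $y$ down to $z$, and, at each level of the induction, to establish the ``$K(\Psi;M;\gamma)=0$'' alternative first and then deduce the ``$K(\Psi;M;\gamma)=K(\Psi;M;\gamma-\epsilon_{z+1})$'' alternative from it. It helps to decode the hypotheses first: since a mirror in rows $x,x+1$ consists of removable roots $(x,\textup{down}_{\Psi}(x))$ and $(x+1,\textup{down}_{\Psi}(x)+1)$, conditions~(a) and~(b) force the bounce path $y=c_{0}\to c_{1}\to\cdots\to c_{n-1}=z$ through $y$ and the bounce path through $y+1$ to run parallel, the latter lying exactly one row lower, until the wall of~(c) closes them off in rows $z,z+1$; conditions~(d)--(f) then say that along this doubled staircase $\gamma$ is locally constant while the column multiplicities of $M$ jump by $1$ from the upper strand to the lower one, the only slack sitting at the very top, controlled by~(e) and by the choice of alternative.

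\emph{Base case $n=1$.} Here $y=z$, so~(b),~(d),~(f) are vacuous and there remain only a ceiling in columns $z,z+1$, a wall in rows $z,z+1$, and $\gamma_{z}+1=\gamma_{z+1}$. If $m_{M}(z)+1=m_{M}(z+1)$, then Lemma~\mref{lem:K+K=0} applies and yields $K(\Psi;M;\gamma)+K(\Psi;M;s_{z}\gamma-\epsilon_{z}+\epsilon_{z+1})=0$; since $\gamma_{z}+1=\gamma_{z+1}$ one checks directly that $s_{z}\gamma-\epsilon_{z}+\epsilon_{z+1}=\gamma$, hence $2\,K(\Psi;M;\gamma)=0$ and $K(\Psi;M;\gamma)=0$. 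If instead $m_{M}(z)=m_{M}(z+1)$, then Lemma~\mref{lem:relk}\mref{it:relk4} with $m=z+1$ gives $K(\Psi;M;\gamma)=K(\Psi;M\sqcup(z+1);\gamma)+K(\Psi;M;\gamma-\epsilon_{z+1})$, and $M\sqcup(z+1)$ falls under the case just settled, so its Katalan function vanishes and $K(\Psi;M;\gamma)=K(\Psi;M;\gamma-\epsilon_{z+1})$.

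\emph{Inductive step $n\ge2$.} Put $w:=\textup{down}_{\Psi}(y)=c_{1}$. The mirror in rows $y,y+1$ (the case $x=y$ of~(b)) supplies removable roots $(y,w)$ and $(y+1,w+1)$ of $\Psi$. I would collapse the top step of the staircase by peeling these roots off through repeated use of Lemma~\mref{lem:relk}\mref{it:relk1}, transferring $\bullet$'s between columns $y$ and $y+1$ via Lemma~\mref{lem:relk}\mref{it:relk3}--\mref{it:relk4} as dictated by~(f), and absorbing the residual $\gamma$-shift coming from~(e). The goal is an identity expressing $K(\Psi;M;\gamma)$ as $K(\Psi';M';\gamma')$ plus a combination of Katalan functions each of which vanishes (by the base case, or by the ``$K=0$'' alternative on a path of length $<n$ or of length $n$), where $(\Psi',M',\gamma')$ satisfies~(a)--(f) for $\textup{path}_{\Psi}(w,z)$, a path with $n-1$ vertices, and where $\gamma'=\gamma$ in the first alternative and $\gamma'=\gamma-\epsilon_{z+1}$ in the second. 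The induction hypothesis applied to $(\Psi',M',\gamma')$ then closes both cases.

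The whole difficulty is concentrated in the inductive step. One must check that collapsing the top mirror leaves a genuine root ideal $\Psi'$ still possessing a ceiling in columns $w,w+1$, mirrors along $\textup{path}_{\Psi}(w,\textup{up}_{\Psi}(z))$, and a wall in rows $z,z+1$; that $M'$ and $\gamma'$ still satisfy~(d)--(f); and --- the most error-prone point --- that the many auxiliary Katalan functions generated by Lemma~\mref{lem:relk} really do cancel in pairs or vanish, and that the surviving lowering is by $\epsilon_{z+1}$ rather than by $\epsilon_{w+1}$ (which requires sliding the lowering operator down the lower strand, legitimate precisely because of the ceiling and wall along it). All of this is bounce-path bookkeeping --- tracking how $\textup{down}_{\Psi}$, $\textup{up}_{\Psi}$ and the bounce paths of $\Psi$ restrict to $\Psi'$, and how the removed roots interact with the $\bullet$'s stacked in columns $y$ and $y+1$ --- and it, rather than any single identity, is the heart of the proof.
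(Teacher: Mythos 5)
The statement you are proving is Lemma~\ref{lem:mirr1}, which this paper does not actually prove: it is quoted verbatim from \cite[Lemma~4.6]{BMS} and then used as a black box. So there is no in-paper proof to compare against; I can only assess whether your argument stands on its own.

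Your base case $n=1$ (i.e.\ $y=z$) is correct and complete. Conditions (b), (d), (f) are vacuous, and the direct computation $s_z\gamma-\epsilon_z+\epsilon_{z+1}=\gamma$ under $\gamma_z+1=\gamma_{z+1}$ combined with Lemma~\ref{lem:K+K=0} gives $2K(\Psi;M;\gamma)=0$; the second alternative then follows from Lemma~\ref{lem:relk}~\ref{it:relk4} applied with $m=z+1$ and the first alternative. This is exactly right.

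The inductive step, however, is not a proof --- it is a plan, and you say so yourself (``The whole difficulty is concentrated in the inductive step \dots it, rather than any single identity, is the heart of the proof''). You never write down the claimed identity $K(\Psi;M;\gamma)=K(\Psi';M';\gamma')+(\text{vanishing terms})$, never specify $\Psi'$, $M'$, $\gamma'$, and never verify that $(\Psi',M',\gamma')$ satisfies (a)--(f) for the shorter path. Two concrete points that must be addressed and are not: (1) conditions (a)--(b) at $x=y$ give a \emph{mirror}, not a ceiling, in columns $w,w+1$ where $w=\textup{down}_\Psi(y)$ --- column $w$ has length $y$ and column $w+1$ has length $y+1$ --- so one must first add the addable root $(y+1,w)$ via Lemma~\ref{lem:relk}~\ref{it:relk2} and then control the correction term $K(\Psi\cup(y+1,w);M;\gamma+\epsilon_{y+1}-\epsilon_w)$; it is not obvious from what you wrote that this correction term vanishes or folds into the induction. (2) You claim that after collapsing the top mirror ``the surviving lowering is by $\epsilon_{z+1}$ rather than by $\epsilon_{w+1}$,'' which is the crux of the second alternative, but this sliding of the lowering index down the lower strand is exactly the content of the lemma being proved and cannot be invoked informally. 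Until these cancellations are exhibited explicitly, the argument has a genuine gap: a reader cannot check that the induction closes.

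If you want a model for the level of explicitness required, look at the paper's proof of its own Mirror Lemma~II (Lemma~\ref{lem:mirr2}), which is proved by induction on $z-y$ and writes out every application of Lemmas~\ref{lem:K+K=0} and \ref{lem:relk} together with the verification that the auxiliary Katalan function in each step vanishes by Lemma~\ref{lem:mirr1}. Your inductive step needs to be carried out at that level of detail before it can be counted as a proof.
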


The following is a new Mirror Lemma, which is a key result serving for our later uses.

\begin{lemma}({\rm Mirror Lemma II})
Let $\Psi\subseteq\Delta_{\ell}^{+}$ be a root ideal, $M$ a multiset with \textup{supp}$(M)\subseteq [\ell]$ and $\gamma\in\mathbb{Z}^{\ell}$. Let $1\le y\le z<\ell$ be indices in a same bounce path  of $\Psi$. Suppose that
\begin{enumerate}
\item $\Psi$ has an addable root $\alpha=(\textup{up}_{\Psi}(y+1),y)$ and a removable root $\beta=(\textup{up}_{\Psi}(y+1),y+1)$;
\item $\Psi$ has a mirror in rows $x,x+1$ for each $x\in\textup{path}_{\Psi}(y,\textup{up}_{\Psi}(z))$;
\item $\Psi$ has a wall in rows $z,z+1$;
\item $m_{M}(x)+1=m_{M}(x+1)$ for all $x\in\textup{path}_{\Psi}(y,z)$;
\item $\gamma_{x}=\gamma_{x+1}$ for all $x\in\textup{path}_{\Psi}(y,\textup{up}_{\Psi}(z))$;
\item $\gamma_{z}+1=\gamma_{z+1}$.
    \end{enumerate}
Then
\begin{equation*}
K(\Psi;M;\gamma)=
K(\Psi\cup\alpha;M\sqcup y;\gamma+\epsilon_{\text{\rm up}_{\Psi}(y+1)}
-\epsilon_{z+1}).
\end{equation*}
\mlabel{lem:mirr2}
\end{lemma}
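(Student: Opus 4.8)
The plan is to derive Mirror Lemma II from the original Mirror Lemma (Lemma~\ref{lem:mirr1}) together with the four local relations of Lemma~\ref{lem:relk}. The key observation is that the hypotheses of Mirror Lemma II differ from those of Mirror Lemma only at the "top" of the relevant bounce segment: instead of a ceiling in columns $y,y+1$ and the equality $m_M(y)=m_M(y+1)$ or $m_M(y)+1=m_M(y+1)$, we are given an addable root $\alpha=(\mathrm{up}_\Psi(y+1),y)$, a removable root $\beta=(\mathrm{up}_\Psi(y+1),y+1)$, and the stronger multiplicity condition $m_M(x)+1=m_M(x+1)$ for all $x\in\mathrm{path}_\Psi(y,z)$ (note this now includes $x=\mathrm{down}_\Psi(y)$ through $z$ \emph{and} $x=y$ itself in the sense that $y$ sits at the top). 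So the first step is to use Lemma~\ref{lem:relk}\eqref{it:relk2} on the addable root $\alpha$ to write
\[
K(\Psi;M;\gamma)=K(\Psi\cup\alpha;M;\gamma)-K(\Psi\cup\alpha;M;\gamma+\varepsilon_\alpha),
\]
and then analyze each of the two terms on the right. The enlarged root ideal $\Psi\cup\alpha$ now has a ceiling in columns $y,y+1$ (because adding $(\mathrm{up}_\Psi(y+1),y)$ makes column $y$ reach up as far as column $y+1$ does, the latter via the removable root $\beta$), and the bounce-path structure on $\mathrm{path}_\Psi(y,\mathrm{up}_\Psi(z))$ relevant to conditions (2),(3),(5),(6) of Mirror Lemma is preserved. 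Thus Mirror Lemma should apply to $K(\Psi\cup\alpha;M;\cdot)$ with the same $y,z$.

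The second step is the bookkeeping: I would check that $\gamma$ and $\gamma+\varepsilon_\alpha=\gamma+\epsilon_{\mathrm{up}_\Psi(y+1)}-\epsilon_y$ each satisfy the $\gamma$-hypotheses (4),(5) of Mirror Lemma for $\Psi\cup\alpha$, using hypothesis (5) of Mirror Lemma II that $\gamma_x=\gamma_{x+1}$ along $\mathrm{path}_\Psi(y,\mathrm{up}_\Psi(z))$ — in particular $\gamma_{\mathrm{up}_\Psi(y+1)}=\gamma_y$, so adding $\varepsilon_\alpha$ breaks exactly the equality at the top row pair while leaving the equalities further down intact. For the multiset side, hypothesis (4) of Mirror Lemma II gives $m_M(y)+1=m_M(y+1)$, which is the "vanishing" branch of Mirror Lemma: so $K(\Psi\cup\alpha;M;\gamma')=0$ for the appropriate $\gamma'$. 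The delicate point is that for one of the two terms the relevant equality $\gamma_y=\gamma_{y+1}$ holds and for the other it is $\gamma_y+1=\gamma_{y+1}$ (after the shift by $\varepsilon_\alpha$ lowers $\gamma_y$), so one term vanishes outright by the first conclusion of Mirror Lemma while the other reduces by the second conclusion $K=K(\cdots;\gamma-\epsilon_{z+1})$. I expect after this that
\[
K(\Psi;M;\gamma)=-K(\Psi\cup\alpha;M;\gamma+\varepsilon_\alpha-\epsilon_{z+1}),
\]
or something of this shape, and then I would use Lemma~\ref{lem:relk}\eqref{it:relk4} to trade the sign for an extra $\bullet$ in column $y$: $-K(\Psi\cup\alpha;M;\delta-\epsilon_y')$-type manipulations convert $-K(\Psi\cup\alpha;M;\cdot)$ into $K(\Psi\cup\alpha;M\sqcup y;\cdot)$ at the cost of tracking which $\gamma$-argument survives. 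Matching the final $\gamma$-argument against $\gamma+\epsilon_{\mathrm{up}_\Psi(y+1)}-\epsilon_{z+1}$ is then a direct comparison.

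The main obstacle I anticipate is verifying that $\Psi\cup\alpha$ genuinely satisfies \emph{all} the hypotheses of the original Mirror Lemma — especially that the ceiling hypothesis (1) is created by adding $\alpha$, and that conditions (2) (mirrors along the path) and (6) (the multiplicity chain along $\mathrm{path}_\Psi(\mathrm{down}_\Psi(y),z)$) are not disturbed when we pass from $\Psi$ to $\Psi\cup\alpha$, since adding a root can in principle change which roots are removable and hence alter the bounce graph near row $\mathrm{up}_\Psi(y+1)$. I would handle this by a careful local picture of rows $\mathrm{up}_\Psi(y+1)$ through $y+1$: the removable root $\beta=(\mathrm{up}_\Psi(y+1),y+1)$ guarantees that column $y+1$ "caps" at row $\mathrm{up}_\Psi(y+1)$, and adding $\alpha$ in the same row at column $y$ turns $(\mathrm{up}_\Psi(y+1),y)$ into a removable root of $\Psi\cup\alpha$ while making $\beta$ no longer removable — precisely the swap needed so that $y,y+1$ become a ceiling pair and the new bounce edge at the top is $(\mathrm{up}_\Psi(y+1),y)$ instead of $(\mathrm{up}_\Psi(y+1),y+1)$. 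Once that local picture is pinned down, all remaining hypotheses of Mirror Lemma transfer from hypotheses (2)–(6) of Mirror Lemma II essentially verbatim, and the rest is the algebra of Lemma~\ref{lem:relk} outlined above.
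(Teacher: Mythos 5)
Your opening move — applying Lemma~\ref{lem:relk}~\ref{it:relk2} to the addable root $\alpha$ and then killing $K(\Psi\cup\alpha;M;\gamma)$ with the vanishing branch of Lemma~\ref{lem:mirr1} — is exactly how the paper starts, and your verification that $\Psi\cup\alpha$ acquires the ceiling in columns $y,y+1$ is the right sanity check. But the rest of the plan has a concrete gap. After the first term vanishes you are left with $-K(\Psi\cup\alpha;M;\gamma+\varepsilon_\alpha)$, and you propose to finish by applying Lemma~\ref{lem:mirr1} to this term and then trading the sign via Lemma~\ref{lem:relk}~\ref{it:relk4}. Neither step works as stated. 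First, $\gamma+\varepsilon_\alpha$ fails the $\gamma$-hypotheses of Lemma~\ref{lem:mirr1}: for $z=y$ one has $(\gamma+\varepsilon_\alpha)_z+1\neq(\gamma+\varepsilon_\alpha)_{z+1}$ since $\gamma_z+1=\gamma_{z+1}$, and for $z>y$ one has $(\gamma+\varepsilon_\alpha)_y\neq(\gamma+\varepsilon_\alpha)_{y+1}$ since $\gamma_y=\gamma_{y+1}$. Second, the two summands carry the \emph{same} multiset $M$, so they cannot fall under the two different branches of Lemma~\ref{lem:mirr1} as you suggest ("one term vanishes outright by the first conclusion ... while the other reduces by the second conclusion"); only the $m_M(y)+1=m_M(y+1)$ branch is in play.

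The missing ingredient is Lemma~\ref{lem:K+K=0}: the paper uses it to convert $-K(\Psi\cup\alpha;M;\gamma+\epsilon_{\textup{up}_\Psi(z+1)}-\epsilon_z)$ into $+K(\Psi\cup\alpha;M;\gamma+\epsilon_{\textup{up}_\Psi(z+1)}-\epsilon_{z+1})$ (using the ceiling/wall in columns/rows $z,z+1$ of $\Psi\cup\alpha$), and only then applies Lemma~\ref{lem:relk}~\ref{it:relk4} to absorb $y$ into the multiset, after which the correction term does vanish by Lemma~\ref{lem:mirr1}. More importantly, this only closes the case $z=y$: the difference between the target $\gamma+\epsilon_{\textup{up}_\Psi(y+1)}-\epsilon_{z+1}$ and the current $\gamma+\varepsilon_\alpha$ is a $(-\epsilon_y)$ that has to be "transported" down the bounce path to $(-\epsilon_{z+1})$, and a single application of Lemma~\ref{lem:K+K=0} only moves it one column. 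The paper handles this by an induction on $z-y$: in the inductive step it introduces the addable root $\alpha_1=(\textup{up}_\Psi(z+1),z)$ (available because of the mirror in rows $\textup{up}_\Psi(z),\textup{up}_\Psi(z+1)$), repeats the same three-lemma dance to land in $K(\Psi\cup\alpha_1;M;\gamma+\epsilon_{\textup{up}_\Psi(z+1)}-\epsilon_{z+1})$, applies the inductive hypothesis with a shorter path, and finally strips off $\alpha_1$ via Lemma~\ref{lem:relk}~\ref{it:relk1}. Without this induction (or an equivalent descent along the bounce path), your plan does not reach the stated conclusion for $z>y$.
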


\begin{proof}
We apply induction on $z-y\geq 0$. For the initial step of $z-y=0$,
\begin{align}
K(\Psi;M;\gamma)=&\ K(\Psi\cup\alpha;M;\gamma) - K(\Psi\cup\alpha;M;\gamma + \varepsilon_{\alpha}) \hspace{1cm} (\text{by Lemma~\ref{lem:relk}~\ref{it:relk2}})\notag\\
=&\  - K(\Psi\cup\alpha;M;\gamma + \varepsilon_{\alpha}) \hspace{1cm} (\text{the first summand vanishes by Lemma~\ref{lem:mirr1}})\notag\\
=&\  - K(\Psi\cup\alpha;M;\gamma + \epsilon_{{\rm up}_\Psi(z+1)}-\epsilon_z) \hspace{1cm} (\text{by $\alpha=(\textup{up}_{\Psi}(y+1),y)$ and $z=y$})\notag\\
=&\ K(\Psi\cup\alpha;M;\gamma+\epsilon_{\text{\rm up}_{\Psi}(z+1)}
-\epsilon_{z+1})\notag\\
& \hspace{0.5cm} (\text{by $s_z(\gamma + \epsilon_{{\rm up}_\Psi(z+1)}-\epsilon_z)-\epsilon_z +\epsilon_{z+1} = \gamma+\epsilon_{\text{\rm up}_{\Psi}(z+1)}
-\epsilon_{z+1}$ and Lemma~\ref{lem:K+K=0}})\notag\\
=&\ K(\Psi\cup\alpha;M\sqcup z;\gamma+\epsilon_{\text{\rm up}_{\Psi}(z+1)}
-\epsilon_{z+1}) + K(\Psi\cup\alpha;M;\gamma+\epsilon_{\text{\rm up}_{\Psi}(z+1)}
-\epsilon_{z+1}-\epsilon_{z}) \mlabel{eq:mirr2-ini} \\
&\ \hspace{9cm} (\text{by Lemma~\ref{lem:relk}~\ref{it:relk4}}).\notag
\end{align}
Since $\Psi\cup\alpha$ has a ceiling in columns $z,z+1$, a wall in rows $z,z+1$ and
\[
\big(\gamma+\epsilon_{\text{\rm up}_{\Psi}(z+1)}
-\epsilon_{z+1}-\epsilon_{z}\big)_{z} + 1 = \gamma_z-1+1 \overset{\text{(f)}}{=} (\gamma_{z+1}-1)-1+1  = \big(\gamma+\epsilon_{\text{\rm up}_{\Psi}(z+1)}
-\epsilon_{z+1}-\epsilon_{z}\big)_{z+1}
\]
by Lemma~\mref{lem:mirr1}, we obtain
\[
K(\Psi\cup\alpha;M;\gamma+\epsilon_{\text{\rm up}_{\Psi}(z+1)}
-\epsilon_{z+1}-\epsilon_{z}) = 0.
\]
Substituting the above equation into~(\mref{eq:mirr2-ini}) and using $z=y$ yield
\begin{align*}
K(\Psi;M;\gamma)=
K(\Psi\cup\alpha;M\sqcup z;\gamma+\epsilon_{\text{\rm up}_{\Psi}(z+1)}
-\epsilon_{z+1}) =
K(\Psi\cup\alpha;M\sqcup y;\gamma+\epsilon_{\text{\rm up}_{\Psi}(y+1)}
-\epsilon_{z+1}).
\end{align*}

For the inductive step of $z-y>0$, denote $\alpha_1:=({\rm up}_\Psi(z+1),z)$.
Since there is a mirror in rows ${\rm up}_\Psi(z),{\rm up}_\Psi(z+1)$, we have that $\alpha_1$ is an addable root of $\Psi$. Hence
\begin{align}
K(\Psi;M;\gamma)=&\ K(\Psi\cup\alpha_1;M;\gamma) - K(\Psi\cup\alpha_1;M;\gamma + \varepsilon_{\alpha_1}) \hspace{1cm} (\text{by Lemma~\ref{lem:relk}~\ref{it:relk2}})\notag\\
=&\  - K(\Psi\cup\alpha_1;M;\gamma + \varepsilon_{\alpha_1}) \hspace{1cm} (\text{the first summand vanishes by Lemma~\ref{lem:mirr1}})\notag\\
=&\  - K(\Psi\cup\alpha_1;M;\gamma + \epsilon_{{\rm up}_\Psi(z+1)}-\epsilon_z)\hspace{1cm}(\text{by $\alpha_1:=({\rm up}_\Psi(z+1),z)$})\notag\\
=&\ K(\Psi\cup\alpha_1;M;\gamma + \epsilon_{{\rm up}_\Psi(z+1)}-\epsilon_{z+1})\mlabel{eq:mirr2-2-1}\\
&\ (\text{by $s_z(\gamma + \epsilon_{{\rm up}_\Psi(z+1)}-\epsilon_z)-\epsilon_z +\epsilon_{z+1} = \gamma+\epsilon_{\text{\rm up}_{\Psi}(z+1)}
-\epsilon_{z+1}$ and Lemma~\ref{lem:K+K=0}}).\notag
\end{align}
Since $\Psi\cup\alpha_1$ has a wall in rows ${\rm up}_\Psi(z),{\rm up}_\Psi(z+1)$ and
\begin{align*}
&\ \big( \gamma + \epsilon_{{\rm up}_\Psi(z+1)}-\epsilon_{z+1} \big)_{{\rm up}_\Psi(z)} + 1 = \gamma_{{\rm up}_\Psi(z)} + 1 \\ \overset{\text{(e)}}{=}&\ \gamma_{{\rm up}_\Psi(z)+1} + 1
= \gamma_{{\rm up}_\Psi(z+1)} + 1 = \big( \gamma + \epsilon_{{\rm up}_\Psi(z+1)}-\epsilon_{z+1} \big)_{{\rm up}_\Psi(z+1)},
\end{align*}
we have
\begin{align}
K(\Psi;M;\gamma) =&\ K(\Psi\cup\alpha_1;M;\gamma + \epsilon_{{\rm up}_\Psi(z+1)}-\epsilon_{z+1})\hspace{1cm}(\text{by~(\ref{eq:mirr2-2-1})})\notag\\
=&\ K(\Psi\cup\{\alpha_1,\alpha\};M\sqcup y;\gamma + \epsilon_{{\rm up}_\Psi(z+1)}-\epsilon_{z+1} + \epsilon_{{\rm up}_\Psi(y+1)}-\epsilon_{{\rm up}_\Psi(z+1)})\notag\\
&\ \hspace{5cm}(\text{by the inductive hypothesis})\notag\\
=&\ K(\Psi\cup\{\alpha_1,\alpha\};M\sqcup y;\gamma+ \epsilon_{{\rm up}_\Psi(y+1)}-\epsilon_{z+1} )\notag\\
=&\ K(\Psi\cup\alpha;M\sqcup y;\gamma+ \epsilon_{{\rm up}_\Psi(y+1)}-\epsilon_{z+1} )\mlabel{eq:mirr2-ind}\\
&\ + K(\Psi\cup\{\alpha_1,\alpha\};M\sqcup y;\gamma+ \epsilon_{{\rm up}_\Psi(y+1)}-\epsilon_{z+1} + \epsilon_{{\rm up}_\Psi(z+1)}-\epsilon_{z}) \notag\\
&\ \hspace{4cm} (\text{by removing $\alpha_1$ and Lemma~\ref{lem:relk}~\ref{it:relk1}}).\notag
\end{align}
Since $\Psi\cup\{\alpha_1,\alpha\}$ has a ceiling in columns $y,y+1$, a wall in rows ${\rm up}_\Psi(z),{\rm up}_\Psi(z+1)$ and
\begin{align*}
\big( \gamma+ \epsilon_{{\rm up}_\Psi(y+1)}-\epsilon_{z+1} + \epsilon_{{\rm up}_\Psi(z+1)}-\epsilon_{z} \big)_{{\rm up}_\Psi(z)} + 1 =&\ \gamma_{{\rm up}_\Psi(z)} + 1\\
=&\ \gamma_{{\rm up}_\Psi(z)+1} + 1\hspace{1cm}(\text{by (e)})\\
=&\ \gamma_{{\rm up}_\Psi(z+1)} + 1\hspace{1cm}(\text{by ${\rm up}_\Psi(z)+1={\rm up}_\Psi(z+1)$})\\
=&\ \big( \gamma+ \epsilon_{{\rm up}_\Psi(y+1)}-\epsilon_{z+1} + \epsilon_{{\rm up}_\Psi(z+1)}-\epsilon_{z} \big)_{{\rm up}_\Psi(z+1)},
\end{align*}
it follows from Lemma~\mref{lem:mirr1} that
\[
K(\Psi\cup\{\alpha_1,\alpha\};M\sqcup y;\gamma+ \epsilon_{{\rm up}_\Psi(y+1)}-\epsilon_{z+1} + \epsilon_{{\rm up}_\Psi(z+1)}-\epsilon_{z}) = 0,
\]
which, together with~(\ref{eq:mirr2-ind}), implies that
\[
K(\Psi;M;\gamma)= K(\Psi\cup\alpha;M\sqcup y;\gamma+ \epsilon_{{\rm up}_\Psi(y+1)}-\epsilon_{z+1} ).
\]
This completes the proof.
\end{proof}

We end this subsection with the following property of Katalan functions.

\begin{prop}
Let $\Psi\subseteq\Delta_{\ell}^{+}$ be a root ideal, $M$ a multiset with \textup{supp}$(M)\subseteq [\ell]$, $\gamma\in\mathbb{Z}^{\ell}$ and $z\in[\ell]$. If $n>\gamma_{z}+\ell-z$, then
$L_{z}^{n} K(\Psi;M;\gamma)=0.
$
\mlabel{prop:multin}
\end{prop}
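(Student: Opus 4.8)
The plan is to reduce the statement to a vanishing fact about a single Katalan function, and then to prove that fact by expanding in products $k_\mu$. First I would iterate Remark~\ref{re:BMS}~(a) to get
\[
L_z^n K(\Psi;M;\gamma) = K(\Psi;M;\gamma - n\epsilon_{z}),
\]
so it suffices to show that $K(\Psi;M;\gamma')=0$ for any $\gamma'\in\ZZ^{\ell}$ with $\gamma'_z + \ell - z < 0$; applying this with $\gamma' := \gamma - n\epsilon_{z}$, the hypothesis $n>\gamma_z+\ell-z$ is exactly the condition $\gamma'_z + \ell - z < 0$.

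Next I would use the expression from Remark~\ref{re:BMS}~(c),
\[
K(\Psi;M;\gamma') = \prod_{z'\in M}(1-L_{z'})\prod_{(i,j)\in\Delta_\ell^+\setminus\Psi}(1-R_{ij})\, k_{\gamma'}.
\]
Since $L_{z'}$ and $R_{ij}$ act on the subscript by translation (by $-\epsilon_{z'}$ and by $\epsilon_{i}-\epsilon_{j}$ respectively), these operators commute, and expanding both products writes $K(\Psi;M;\gamma')$ as a finite $\ZZ$-linear combination of terms $k_\mu$ with
\[
\mu = \gamma' - \sum_{z'\in M_0}\epsilon_{z'} + \sum_{(i,j)\in T}(\epsilon_{i}-\epsilon_{j}),
\]
where $M_0$ runs over sub-multisets of $M$ and $T$ over subsets of $\Delta_\ell^+\setminus\Psi$.

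The crucial step is the estimate $\mu_z \le \gamma'_z + (\ell - z)$ for every $\mu$ occurring above: the only positive contributions to the $z$-th coordinate come from roots $(z,j)\in T$, and since $T\subseteq\Delta_\ell^+$ forces $z<j\le\ell$, there are at most $\ell-z$ such roots, each adding $1$, while the factors $(1-L_{z'})$ and the roots $(i,z)\in T$ only decrease $\mu_z$. Hence if $\gamma'_z + \ell - z < 0$, then $\mu_z < 0$ for every such $\mu$, so $k_{\mu_z}^{(z-1)} = 0$ by the convention $k_m^{(r)}=0$ for $m<0$, and therefore $k_\mu = k_{\mu_1}^{(0)}\cdots k_{\mu_\ell}^{(\ell-1)} = 0$. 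Thus every term in the expansion vanishes, giving $K(\Psi;M;\gamma')=0$, which finishes the proof.

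I do not expect a genuine obstacle here: the argument is essentially bookkeeping. The only point requiring care is the bound $\mu_z \le \gamma'_z + \ell - z$, i.e.\ observing that the raising operators $R_{ij}$ occurring in the expansion come from $\Delta_\ell^+$, so that the $z$-th coordinate can be increased at most $\ell-z$ times.
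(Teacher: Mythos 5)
Your proposal is correct and follows essentially the same approach as the paper's proof: both expand $K(\Psi;M;\gamma)$ (or its image under $L_z^n$) as a finite $\ZZ$-linear combination of $k_\mu$'s via Remark~\ref{re:BMS}~(c) and then observe that the $z$-th coordinate of every resulting $\mu$ forces the factor $k^{(z-1)}_{\mu_z - n}$ (resp.\ $k^{(z-1)}_{\mu_z}$) to vanish. The only difference is that you apply $L_z^n$ first and then expand, whereas the paper expands first and then applies $L_z^n$ termwise; you also spell out the bound $\mu_z \le \gamma_z + \ell - z$ more explicitly than the paper does, which the paper asserts without comment.
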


\begin{proof}
Notice that $k_{m}^{(r)}=0$ for $m<0$.
In terms of Remark~\mref{re:BMS}~(c), a Katalan function $K(\Psi;M;\gamma)$ can be written as a linear summation
of some $k_{\mu}$'s with $\mu\in\mathbb{Z}^{\ell}$.
For each summand $k_{\mu}$,
\[
n>\gamma_{z}+\ell-z\geq \mu_z\,\text{ and so }\, \mu_z-n<0,
\]
whence
$$L_z^n k_{\mu} = k_{\mu - n\epsilon_z} = k_{\mu_{1}}^{(0)} \cdots  k_{\mu_{z}-n}^{(z-1)}\cdots k_{\mu_{\ell}}^{(\ell-1)}=0.$$
Hence the result is valid by the additivity of the lowering operator $L_z$.
\end{proof}

\section{Lowering operators acting on closed $k$-Schur Katalan functions}
\mlabel{sec:lower}
This section is to exploit on the effect of lowering operators $L_z$ acting on closed $k$-Schur Katalan functions $\tilde{\mathfrak{g}}_{\lambda}^{(k)}$. The main results are summarized in Theorem~\mref{thm:finalform}. Let us expose our idea first.
For $\lambda\in\pkl$, it follow from Definition~\ref{defn:cKataFunc} and Remark~\ref{re:BMS}~(a) that
\begin{equation}
L_z \fg{\lambda}{k} =L_z K(\dkl;\dkl;\lambda)
=K(\dkl;\dkl;\lambda-\epsilon_z).
\mlabel{eq:LamToMu-a}
\end{equation}
Here since $\lambda - \epsilon_z$ may not be a partition,
we generalize closed $k$-Schur Katalan functions $$K(\Delta^{k}(\lambda);\Delta^{k}(\lambda);\lambda)$$
by relaxing the condition that $\lambda$ is a partition.
For this, recall~\mcite{BMPS19}
$$\tilde{\text{P}}_{\ell}^{k}:=\{ \mu\in\mathbb{Z}_{\le k}^{\ell} \mid \mu_{1}+\ell-1\ge \mu_{2}+\ell-2\ge\cdots\ge\mu_{\ell} \},$$
which is employed to study Catalan functions and $k$-schur positivity.
Notice that $\pkl\subseteq\tilde{\text{P}}_{\ell}^{k}$.

\begin{remark}
Let $\mu\in\tilde{\text{P}}_{\ell}^{k}$.
\begin{enumerate}
\item The $\Delta^{k}(\mu):=\{(i,j)\in\Delta_{\ell}^{+} \mid k-\mu_{i}+i<j \}$ is a root ideal~\cite[p.~943]{BMPS19}.

\item In the root ideal $\Delta^{k}(\mu)$, for $z\in[\ell-1]$,
\begin{itemize}
\item if $\mu_{z}+1=\mu_{z+1}$, then there is a wall in rows $z,z+1$;

\item if $k>\mu_{z}=\mu_{z+1}$ and $z\in[\bott-1]$ with $\bott:=\bott_\mu$ the bottom of $\dkmu$, then there is a mirror in rows $z,z+1$;

\item if $\mu_{z}>\mu_{z+1}$ and $\down{\dkmu}{z}+1\leq\ell$, then there is a ceiling in columns
    $\textup{down}_{\Delta^{k}(\mu)}(z)$, $\textup{down}_{\Delta^{k}(\mu)}(z)+1$~\cite[Remark 5.1]{BMS}.
\end{itemize}

\item Let $z\in[\bott]$ with $\bott:=\bott_\mu$ the bottom of $\dkmu$. If $\mu_z\geq\mu_{z+1}$, then $(z, \down{\dkmu}{z})$ is removable in $\Delta^{k}(\mu)$.

\item Suppose that there exists $z\in[\ell-1]$ such that $\mu_{z}+1=\mu_{z+1}$ and $\mu_x\geq\mu_{x+1}$ for all $x\in[\ell-1]\setminus z$. For the relation between $\textup{top}_{\Delta^{k}(\mu)}(z)$ and $\textup{top}_{\Delta^{k}(\mu)}(z+1)$, there are only two possibilities:
\begin{equation}
\begin{aligned}
&{\rm (I)} \quad y_1:={\rm top}_{\dkmu}(z+1)> {\rm top}_{\dkmu}(z),\\
&{\rm (II)} \quad y_2:=\textup{top}_{\Delta^{k}(\mu)}(z)>\textup{top}_{\Delta^{k}(\mu)}(z+1).
\end{aligned}
\mlabel{eq:noteq}
\end{equation}
That is, ${\rm top}_{\dkmu}(z+1)\neq {\rm top}_{\dkmu}(z)$.
In both cases, $\Delta^{k}(\mu)$ has a mirror in rows $x,x+1$ for each $x\in\textup{path}_{\Delta^{k}(\mu)}            (y_i,\textup{up}_{\Delta^{k}(\mu)}(z))$, $i=1,2$.  See Fig.~1 (resp. Fig.~2) for an illustration of (I) (resp. (II)).
\begin{equation*}
\begin{tikzpicture}[scale=.25,line width=0.5pt,baseline=(a.base)]
\draw (0,0) rectangle (1,-1);
\draw (1,0) rectangle (2,-1);
\filldraw[red,draw=black] (2,0) rectangle (3,-1);\node at(2.5,-0.5){\tiny\( \bullet \)};
\filldraw[red,draw=black] (3,0) rectangle (4,-1);\node at(3.5,-0.5){\tiny\( \bullet \)};
\filldraw[red,draw=black] (4,0) rectangle (5,-1);\node at(4.5,-0.5){\tiny\( \bullet \)};
\filldraw[red,draw=black] (5,0) rectangle (6,-1);\node at(5.5,-0.5){\tiny\( \bullet \)};
\filldraw[red,draw=black] (6,0) rectangle (7,-1);\node at(6.5,-0.5){\tiny\( \bullet \)};
\filldraw[red,draw=black] (7,0) rectangle (8,-1);\node at(7.5,-0.5){\tiny\( \bullet \)};
\filldraw[red,draw=black] (8,0) rectangle (9,-1);\node at(8.5,-0.5){\tiny\( \bullet \)};
\filldraw[red,draw=black] (9,0) rectangle (10,-1);\node at(9.5,-0.5){\tiny\( \bullet \)};
\filldraw[red,draw=black] (10,0) rectangle (11,-1);\node at(10.5,-0.5){\tiny\( \bullet \)};
\filldraw[red,draw=black] (11,0) rectangle (12,-1);\node at(11.5,-0.5){\tiny\( \bullet \)};
\filldraw[red,draw=black] (12,0) rectangle (13,-1);\node at(12.5,-0.5){\tiny\( \bullet \)};
\filldraw[red,draw=black] (13,0) rectangle (14,-1);\node at(13.5,-0.5){\tiny\( \bullet \)};
\filldraw[red,draw=black] (14,0) rectangle (15,-1);\node at(14.5,-0.5){\tiny\( \bullet \)};
\draw (0,-1) rectangle (1,-2);
\draw (1,-1) rectangle (2,-2);
\draw (2,-1) rectangle (3,-2);
\filldraw[red,draw=black] (3,-1) rectangle (4,-2);\node at(3.5,-1.5){\tiny\( \bullet \)};
\filldraw[red,draw=black] (4,-1) rectangle (5,-2);\node at(4.5,-1.5){\tiny\( \bullet \)};
\filldraw[red,draw=black] (5,-1) rectangle (6,-2);\node at(5.5,-1.5){\tiny\( \bullet \)};
\filldraw[red,draw=black] (6,-1) rectangle (7,-2);\node at(6.5,-1.5){\tiny\( \bullet \)};
\filldraw[red,draw=black] (7,-1) rectangle (8,-2);\node at(7.5,-1.5){\tiny\( \bullet \)};
\filldraw[red,draw=black] (8,-1) rectangle (9,-2);\node at(8.5,-1.5){\tiny\( \bullet \)};
\filldraw[red,draw=black] (9,-1) rectangle (10,-2);\node at(9.5,-1.5){\tiny\( \bullet \)};
\filldraw[red,draw=black] (10,-1) rectangle (11,-2);\node at(10.5,-1.5){\tiny\( \bullet \)};
\filldraw[red,draw=black] (11,-1) rectangle (12,-2);\node at(11.5,-1.5){\tiny\( \bullet \)};
\filldraw[red,draw=black] (12,-1) rectangle (13,-2);\node at(12.5,-1.5){\tiny\( \bullet \)};
\filldraw[red,draw=black] (13,-1) rectangle (14,-2);\node at(13.5,-1.5){\tiny\( \bullet \)};
\filldraw[red,draw=black] (14,-1) rectangle (15,-2);\node at(14.5,-1.5){\tiny\( \bullet \)};
\draw (0,-2) rectangle (1,-3);
\draw (1,-2) rectangle (2,-3);
\draw (2,-2) rectangle (3,-3);
\draw (3,-2) rectangle (4,-3);
\filldraw[red,draw=black] (4,-2) rectangle (5,-3);\node at(4.5,-2.5){\tiny\( \bullet \)};
\filldraw[red,draw=black] (5,-2) rectangle (6,-3);\node at(5.5,-2.5){\tiny\( \bullet \)};
\filldraw[red,draw=black] (6,-2) rectangle (7,-3);\node at(6.5,-2.5){\tiny\( \bullet \)};
\filldraw[red,draw=black] (7,-2) rectangle (8,-3);\node at(7.5,-2.5){\tiny\( \bullet \)};
\filldraw[red,draw=black] (8,-2) rectangle (9,-3);\node at(8.5,-2.5){\tiny\( \bullet \)};
\filldraw[red,draw=black] (9,-2) rectangle (10,-3);\node at(9.5,-2.5){\tiny\( \bullet \)};
\filldraw[red,draw=black] (10,-2) rectangle (11,-3);\node at(10.5,-2.5){\tiny\( \bullet \)};
\filldraw[red,draw=black] (11,-2) rectangle (12,-3);\node at(11.5,-2.5){\tiny\( \bullet \)};
\filldraw[red,draw=black] (12,-2) rectangle (13,-3);\node at(12.5,-2.5){\tiny\( \bullet \)};
\filldraw[red,draw=black] (13,-2) rectangle (14,-3);\node at(13.5,-2.5){\tiny\( \bullet \)};
\filldraw[red,draw=black] (14,-2) rectangle (15,-3);\node at(14.5,-2.5){\tiny\( \bullet \)};
\draw (0,-3) rectangle (1,-4);
\draw (1,-3) rectangle (2,-4);
\draw (2,-3) rectangle (3,-4);
\draw (3,-3) rectangle (4,-4);
\draw (4,-3) rectangle (5,-4);
\filldraw[blue!60,draw=black] (5,-3) rectangle (6,-4);\node at(5.5,-3.5){\tiny\( \bullet \)};
\filldraw[blue!60,draw=black] (6,-3) rectangle (7,-4);\node at(6.5,-3.5){\tiny\( \bullet \)};
\filldraw[red,draw=black] (7,-3) rectangle (8,-4);\node at(7.5,-3.5){\tiny\( \bullet \)};
\filldraw[red,draw=black] (8,-3) rectangle (9,-4);\node at(8.5,-3.5){\tiny\( \bullet \)};
\filldraw[red,draw=black] (9,-3) rectangle (10,-4);\node at(9.5,-3.5){\tiny\( \bullet \)};
\filldraw[red,draw=black] (10,-3) rectangle (11,-4);\node at(10.5,-3.5){\tiny\( \bullet \)};
\filldraw[red,draw=black] (11,-3) rectangle (12,-4);\node at(11.5,-3.5){\tiny\( \bullet \)};
\filldraw[red,draw=black] (12,-3) rectangle (13,-4);\node at(12.5,-3.5){\tiny\( \bullet \)};
\filldraw[red,draw=black] (13,-3) rectangle (14,-4);\node at(13.5,-3.5){\tiny\( \bullet \)};
\filldraw[red,draw=black] (14,-3) rectangle (15,-4);\node at(14.5,-3.5){\tiny\( \bullet \)};
\draw (0,-4) rectangle (1,-5);
\draw (1,-4) rectangle (2,-5);
\draw (2,-4) rectangle (3,-5);
\draw (3,-4) rectangle (4,-5);
\draw (4,-4) rectangle (5,-5);
\draw (5,-4) rectangle (6,-5);
\draw (6,-4) rectangle (7,-5);
\filldraw[red,draw=black] (7,-4) rectangle (8,-5);\node at(7.5,-4.5){\tiny\( \bullet \)};
\filldraw[red,draw=black] (8,-4) rectangle (9,-5);\node at(8.5,-4.5){\tiny\( \bullet \)};
\filldraw[red,draw=black] (9,-4) rectangle (10,-5);\node at(9.5,-4.5){\tiny\( \bullet \)};
\filldraw[red,draw=black] (10,-4) rectangle (11,-5);\node at(10.5,-4.5){\tiny\( \bullet \)};
\filldraw[red,draw=black] (11,-4) rectangle (12,-5);\node at(11.5,-4.5){\tiny\( \bullet \)};
\filldraw[red,draw=black] (12,-4) rectangle (13,-5);\node at(12.5,-4.5){\tiny\( \bullet \)};
\filldraw[red,draw=black] (13,-4) rectangle (14,-5);\node at(13.5,-4.5){\tiny\( \bullet \)};
\filldraw[red,draw=black] (14,-4) rectangle (15,-5);\node at(14.5,-4.5){\tiny\( \bullet \)};
\draw (0,-5) rectangle (1,-6);
\draw (1,-5) rectangle (2,-6);
\draw (2,-5) rectangle (3,-6);
\draw (3,-5) rectangle (4,-6);
\draw (4,-5) rectangle (5,-6);
\draw (5,-5) rectangle (6,-6);
\draw (6,-5) rectangle (7,-6);
\draw (7,-5) rectangle (8,-6);
\draw (8,-5) rectangle (9,-6);
\filldraw[blue!60,draw=black] (9,-5) rectangle (10,-6);\node at(9.5,-5.5){\tiny\( \bullet \)};
\filldraw[red,draw=black] (10,-5) rectangle (11,-6);\node at(10.5,-5.5){\tiny\( \bullet \)};
\filldraw[red,draw=black] (11,-5) rectangle (12,-6);\node at(11.5,-5.5){\tiny\( \bullet \)};
\filldraw[red,draw=black] (12,-5) rectangle (13,-6);\node at(12.5,-5.5){\tiny\( \bullet \)};
\filldraw[red,draw=black] (13,-5) rectangle (14,-6);\node at(13.5,-5.5){\tiny\( \bullet \)};
\filldraw[red,draw=black] (14,-5) rectangle (15,-6);\node at(14.5,-5.5){\tiny\( \bullet \)};
\draw (0,-6) rectangle (1,-7);
\draw (1,-6) rectangle (2,-7);
\draw (2,-6) rectangle (3,-7);
\draw (3,-6) rectangle (4,-7);
\draw (4,-6) rectangle (5,-7);
\draw (5,-6) rectangle (6,-7);
\draw (6,-6) rectangle (7,-7);
\draw (7,-6) rectangle (8,-7);
\draw (8,-6) rectangle (9,-7);
\draw (9,-6) rectangle (10,-7);
\filldraw[blue!60,draw=black] (10,-6) rectangle (11,-7);\node at(10.5,-6.5){\tiny\( \bullet \)};
\filldraw[red,draw=black] (11,-6) rectangle (12,-7);\node at(11.5,-6.5){\tiny\( \bullet \)};
\filldraw[red,draw=black] (12,-6) rectangle (13,-7);\node at(12.5,-6.5){\tiny\( \bullet \)};
\filldraw[red,draw=black] (13,-6) rectangle (14,-7);\node at(13.5,-6.5){\tiny\( \bullet \)};
\filldraw[red,draw=black] (14,-6) rectangle (15,-7);\node at(14.5,-6.5){\tiny\( \bullet \)};
\draw (0,-7) rectangle (1,-8);
\draw (1,-7) rectangle (2,-8);
\draw (2,-7) rectangle (3,-8);
\draw (3,-7) rectangle (4,-8);
\draw (4,-7) rectangle (5,-8);
\draw (5,-7) rectangle (6,-8);
\draw (6,-7) rectangle (7,-8);
\draw (7,-7) rectangle (8,-8);
\draw (8,-7) rectangle (9,-8);
\draw (9,-7) rectangle (10,-8);
\draw (10,-7) rectangle (11,-8);
\draw (11,-7) rectangle (12,-8);
\filldraw[red,draw=black] (12,-7) rectangle (13,-8);\node at(12.5,-7.5){\tiny\( \bullet \)};
\filldraw[red,draw=black] (13,-7) rectangle (14,-8);\node at(13.5,-7.5){\tiny\( \bullet \)};
\filldraw[red,draw=black] (14,-7) rectangle (15,-8);\node at(14.5,-7.5){\tiny\( \bullet \)};
\draw (0,-8) rectangle (1,-9);
\draw (1,-8) rectangle (2,-9);
\draw (2,-8) rectangle (3,-9);
\draw (3,-8) rectangle (4,-9);
\draw (4,-8) rectangle (5,-9);
\draw (5,-8) rectangle (6,-9);
\draw (6,-8) rectangle (7,-9);
\draw (7,-8) rectangle (8,-9);
\draw (8,-8) rectangle (9,-9);
\draw (9,-8) rectangle (10,-9);
\draw (10,-8) rectangle (11,-9);
\draw (11,-8) rectangle (12,-9);
\draw (12,-8) rectangle (13,-9);
\filldraw[red,draw=black] (13,-8) rectangle (14,-9);\node at(13.5,-8.5){\tiny\( \bullet \)};
\filldraw[red,draw=black] (14,-8) rectangle (15,-9);\node at(14.5,-8.5){\tiny\( \bullet \)};
\draw (0,-9) rectangle (1,-10);
\draw (1,-9) rectangle (2,-10);
\draw (2,-9) rectangle (3,-10);
\draw (3,-9) rectangle (4,-10);
\draw (4,-9) rectangle (5,-10);
\draw (5,-9) rectangle (6,-10);
\draw (6,-9) rectangle (7,-10);
\draw (7,-9) rectangle (8,-10);
\draw (8,-9) rectangle (9,-10);
\draw (9,-9) rectangle (10,-10);
\draw (10,-9) rectangle (11,-10);
\draw (11,-9) rectangle (12,-10);
\draw (12,-9) rectangle (13,-10);
\draw (13,-9) rectangle (14,-10);
\filldraw[blue!60,draw=black] (14,-9) rectangle (15,-10);\node at(14.5,-9.5){\tiny\( \bullet \)};
\draw (0,-10) rectangle (1,-11);
\draw (1,-10) rectangle (2,-11);
\draw (2,-10) rectangle (3,-11);
\draw (3,-10) rectangle (4,-11);
\draw (4,-10) rectangle (5,-11);
\draw (5,-10) rectangle (6,-11);
\draw (6,-10) rectangle (7,-11);
\draw (7,-10) rectangle (8,-11);
\draw (8,-10) rectangle (9,-11);
\draw (9,-10) rectangle (10,-11);
\draw (10,-10) rectangle (11,-11);
\draw (11,-10) rectangle (12,-11);
\draw (12,-10) rectangle (13,-11);
\draw (13,-10) rectangle (14,-11);
\filldraw[blue!60,draw=black] (14,-10) rectangle (15,-11);\node at(14.5,-10.5){\tiny\( \bullet \)};
\draw (0,-11) rectangle (1,-12);
\draw (1,-11) rectangle (2,-12);
\draw (2,-11) rectangle (3,-12);
\draw (3,-11) rectangle (4,-12);
\draw (4,-11) rectangle (5,-12);
\draw (5,-11) rectangle (6,-12);
\draw (6,-11) rectangle (7,-12);
\draw (7,-11) rectangle (8,-12);
\draw (8,-11) rectangle (9,-12);
\draw (9,-11) rectangle (10,-12);
\draw (10,-11) rectangle (11,-12);
\draw (11,-11) rectangle (12,-12);
\draw (12,-11) rectangle (13,-12);
\draw (13,-11) rectangle (14,-12);
\draw (14,-11) rectangle (15,-12);
\draw (0,-12) rectangle (1,-13);
\draw (1,-12) rectangle (2,-13);
\draw (2,-12) rectangle (3,-13);
\draw (3,-12) rectangle (4,-13);
\draw (4,-12) rectangle (5,-13);
\draw (5,-12) rectangle (6,-13);
\draw (6,-12) rectangle (7,-13);
\draw (7,-12) rectangle (8,-13);
\draw (8,-12) rectangle (9,-13);
\draw (9,-12) rectangle (10,-13);
\draw (10,-12) rectangle (11,-13);
\draw (11,-12) rectangle (12,-13);
\draw (12,-12) rectangle (13,-13);
\draw (13,-12) rectangle (14,-13);
\draw (14,-12) rectangle (15,-13);
\draw (0,-13) rectangle (1,-14);
\draw (1,-13) rectangle (2,-14);
\draw (2,-13) rectangle (3,-14);
\draw (3,-13) rectangle (4,-14);
\draw (4,-13) rectangle (5,-14);
\draw (5,-13) rectangle (6,-14);
\draw (6,-13) rectangle (7,-14);
\draw (7,-13) rectangle (8,-14);
\draw (8,-13) rectangle (9,-14);
\draw (9,-13) rectangle (10,-14);
\draw (10,-13) rectangle (11,-14);
\draw (11,-13) rectangle (12,-14);
\draw (12,-13) rectangle (13,-14);
\draw (13,-13) rectangle (14,-14);
\draw (14,-13) rectangle (15,-14);
\draw (0,-14) rectangle (1,-15);
\draw (1,-14) rectangle (2,-15);
\draw (2,-14) rectangle (3,-15);
\draw (3,-14) rectangle (4,-15);
\draw (4,-14) rectangle (5,-15);
\draw (5,-14) rectangle (6,-15);
\draw (6,-14) rectangle (7,-15);
\draw (7,-14) rectangle (8,-15);
\draw (8,-14) rectangle (9,-15);
\draw (9,-14) rectangle (10,-15);
\draw (10,-14) rectangle (11,-15);
\draw (11,-14) rectangle (12,-15);
\draw (12,-14) rectangle (13,-15);
\draw (13,-14) rectangle (14,-15);
\draw (14,-14) rectangle (15,-15);
\node at(7.7,-16){\scriptsize\( \tiny{\textbf{Fig.~1.} }\) };
\draw[black,line width=0.5pt] (0,0)--(15,-15);
%
\draw[orange,line width=0.8pt] (3.5,-2)--(3.5,-3.5);
\draw[orange,line width=0.8pt] (3.5,-3.5)--(5,-3.5);
\draw[orange,line width=0.8pt] (5.5,-4)--(5.5,-5.5);
\draw[orange,line width=0.8pt] (5.5,-5.5)--(9,-5.5);
\draw[orange,line width=0.8pt] (9.5,-6)--(9.5,-9.5);
\draw[orange,line width=0.8pt] (9.5,-9.5)--(14,-9.5);
%
\draw[green,line width=0.8pt] (10.5,-7)--(10.5,-10.5);
\draw[green,line width=0.8pt] (10.5,-10.5)--(14,-10.5);
\draw[green,line width=0.8pt] (14.5,-11)--(14.5,-14.5);
\draw[green,line width=0.8pt] (14.5,-14.5)--(15,-14.5);
\draw[black,line width=0.8pt,->] (15,-9.5)--(15.8,-9.5);
\node at (16.1,-9.5){ \tiny{\textbf{$z$}} };
\draw[black,line width=0.8pt,->] (15,-10.5)--(15.8,-10.5);
\node at (16.9,-10.5){ \tiny{\textbf{$z+1$}} };
\draw[black,line width=0.8pt,->] (15,-5.5)--(15.8,-5.5);
\node at (16.3,-5.5){ \tiny{\textbf{$y_1$}} };
\end{tikzpicture}
\hspace{2cm}
\begin{tikzpicture}[scale=.25,line width=0.5pt,baseline=(a.base)]
\draw (0,0) rectangle (1,-1);
\draw (1,0) rectangle (2,-1);
\filldraw[red,draw=black] (2,0) rectangle (3,-1);\node at(2.5,-0.5){\tiny\( \bullet \)};
\filldraw[red,draw=black] (3,0) rectangle (4,-1);\node at(3.5,-0.5){\tiny\( \bullet \)};
\filldraw[red,draw=black] (4,0) rectangle (5,-1);\node at(4.5,-0.5){\tiny\( \bullet \)};
\filldraw[red,draw=black] (5,0) rectangle (6,-1);\node at(5.5,-0.5){\tiny\( \bullet \)};
\filldraw[red,draw=black] (6,0) rectangle (7,-1);\node at(6.5,-0.5){\tiny\( \bullet \)};
\filldraw[red,draw=black] (7,0) rectangle (8,-1);\node at(7.5,-0.5){\tiny\( \bullet \)};
\filldraw[red,draw=black] (8,0) rectangle (9,-1);\node at(8.5,-0.5){\tiny\( \bullet \)};
\filldraw[red,draw=black] (9,0) rectangle (10,-1);\node at(9.5,-0.5){\tiny\( \bullet \)};
\filldraw[red,draw=black] (10,0) rectangle (11,-1);\node at(10.5,-0.5){\tiny\( \bullet \)};
\filldraw[red,draw=black] (11,0) rectangle (12,-1);\node at(11.5,-0.5){\tiny\( \bullet \)};
\filldraw[red,draw=black] (12,0) rectangle (13,-1);\node at(12.5,-0.5){\tiny\( \bullet \)};
\filldraw[red,draw=black] (13,0) rectangle (14,-1);\node at(13.5,-0.5){\tiny\( \bullet \)};
\filldraw[red,draw=black] (14,0) rectangle (15,-1);\node at(14.5,-0.5){\tiny\( \bullet \)};
\draw (0,-1) rectangle (1,-2);
\draw (1,-1) rectangle (2,-2);
\draw (2,-1) rectangle (3,-2);
\filldraw[red,draw=black] (3,-1) rectangle (4,-2);\node at(3.5,-1.5){\tiny\( \bullet \)};
\filldraw[red,draw=black] (4,-1) rectangle (5,-2);\node at(4.5,-1.5){\tiny\( \bullet \)};
\filldraw[red,draw=black] (5,-1) rectangle (6,-2);\node at(5.5,-1.5){\tiny\( \bullet \)};
\filldraw[red,draw=black] (6,-1) rectangle (7,-2);\node at(6.5,-1.5){\tiny\( \bullet \)};
\filldraw[red,draw=black] (7,-1) rectangle (8,-2);\node at(7.5,-1.5){\tiny\( \bullet \)};
\filldraw[red,draw=black] (8,-1) rectangle (9,-2);\node at(8.5,-1.5){\tiny\( \bullet \)};
\filldraw[red,draw=black] (9,-1) rectangle (10,-2);\node at(9.5,-1.5){\tiny\( \bullet \)};
\filldraw[red,draw=black] (10,-1) rectangle (11,-2);\node at(10.5,-1.5){\tiny\( \bullet \)};
\filldraw[red,draw=black] (11,-1) rectangle (12,-2);\node at(11.5,-1.5){\tiny\( \bullet \)};
\filldraw[red,draw=black] (12,-1) rectangle (13,-2);\node at(12.5,-1.5){\tiny\( \bullet \)};
\filldraw[red,draw=black] (13,-1) rectangle (14,-2);\node at(13.5,-1.5){\tiny\( \bullet \)};
\filldraw[red,draw=black] (14,-1) rectangle (15,-2);\node at(14.5,-1.5){\tiny\( \bullet \)};
\draw (0,-2) rectangle (1,-3);
\draw (1,-2) rectangle (2,-3);
\draw (2,-2) rectangle (3,-3);
\draw (3,-2) rectangle (4,-3);
\filldraw[blue!60,draw=black] (4,-2) rectangle (5,-3);\node at(4.5,-2.5){\tiny\( \bullet \)};
\filldraw[blue!60,draw=black] (5,-2) rectangle (6,-3);\node at(5.5,-2.5){\tiny\( \bullet \)};
\filldraw[red,draw=black] (6,-2) rectangle (7,-3);\node at(6.5,-2.5){\tiny\( \bullet \)};
\filldraw[red,draw=black] (7,-2) rectangle (8,-3);\node at(7.5,-2.5){\tiny\( \bullet \)};
\filldraw[red,draw=black] (8,-2) rectangle (9,-3);\node at(8.5,-2.5){\tiny\( \bullet \)};
\filldraw[red,draw=black] (9,-2) rectangle (10,-3);\node at(9.5,-2.5){\tiny\( \bullet \)};
\filldraw[red,draw=black] (10,-2) rectangle (11,-3);\node at(10.5,-2.5){\tiny\( \bullet \)};
\filldraw[red,draw=black] (11,-2) rectangle (12,-3);\node at(11.5,-2.5){\tiny\( \bullet \)};
\filldraw[red,draw=black] (12,-2) rectangle (13,-3);\node at(12.5,-2.5){\tiny\( \bullet \)};
\filldraw[red,draw=black] (13,-2) rectangle (14,-3);\node at(13.5,-2.5){\tiny\( \bullet \)};
\filldraw[red,draw=black] (14,-2) rectangle (15,-3);\node at(14.5,-2.5){\tiny\( \bullet \)};
\draw (0,-3) rectangle (1,-4);
\draw (1,-3) rectangle (2,-4);
\draw (2,-3) rectangle (3,-4);
\draw (3,-3) rectangle (4,-4);
\draw (4,-3) rectangle (5,-4);
\draw (5,-3) rectangle (6,-4);
\filldraw[blue!60,draw=black] (6,-3) rectangle (7,-4);\node at(6.5,-3.5){\tiny\( \bullet \)};
\filldraw[red,draw=black] (7,-3) rectangle (8,-4);\node at(7.5,-3.5){\tiny\( \bullet \)};
\filldraw[red,draw=black] (8,-3) rectangle (9,-4);\node at(8.5,-3.5){\tiny\( \bullet \)};
\filldraw[red,draw=black] (9,-3) rectangle (10,-4);\node at(9.5,-3.5){\tiny\( \bullet \)};
\filldraw[red,draw=black] (10,-3) rectangle (11,-4);\node at(10.5,-3.5){\tiny\( \bullet \)};
\filldraw[red,draw=black] (11,-3) rectangle (12,-4);\node at(11.5,-3.5){\tiny\( \bullet \)};
\filldraw[red,draw=black] (12,-3) rectangle (13,-4);\node at(12.5,-3.5){\tiny\( \bullet \)};
\filldraw[red,draw=black] (13,-3) rectangle (14,-4);\node at(13.5,-3.5){\tiny\( \bullet \)};
\filldraw[red,draw=black] (14,-3) rectangle (15,-4);\node at(14.5,-3.5){\tiny\( \bullet \)};
\draw (0,-4) rectangle (1,-5);
\draw (1,-4) rectangle (2,-5);
\draw (2,-4) rectangle (3,-5);
\draw (3,-4) rectangle (4,-5);
\draw (4,-4) rectangle (5,-5);
\draw (5,-4) rectangle (6,-5);
\draw (6,-4) rectangle (7,-5);
\filldraw[red,draw=black] (7,-4) rectangle (8,-5);\node at(7.5,-4.5){\tiny\( \bullet \)};
\filldraw[red,draw=black] (8,-4) rectangle (9,-5);\node at(8.5,-4.5){\tiny\( \bullet \)};
\filldraw[red,draw=black] (9,-4) rectangle (10,-5);\node at(9.5,-4.5){\tiny\( \bullet \)};
\filldraw[red,draw=black] (10,-4) rectangle (11,-5);\node at(10.5,-4.5){\tiny\( \bullet \)};
\filldraw[red,draw=black] (11,-4) rectangle (12,-5);\node at(11.5,-4.5){\tiny\( \bullet \)};
\filldraw[red,draw=black] (12,-4) rectangle (13,-5);\node at(12.5,-4.5){\tiny\( \bullet \)};
\filldraw[red,draw=black] (13,-4) rectangle (14,-5);\node at(13.5,-4.5){\tiny\( \bullet \)};
\filldraw[red,draw=black] (14,-4) rectangle (15,-5);\node at(14.5,-4.5){\tiny\( \bullet \)};
\draw (0,-5) rectangle (1,-6);
\draw (1,-5) rectangle (2,-6);
\draw (2,-5) rectangle (3,-6);
\draw (3,-5) rectangle (4,-6);
\draw (4,-5) rectangle (5,-6);
\draw (5,-5) rectangle (6,-6);
\draw (6,-5) rectangle (7,-6);
\draw (7,-5) rectangle (8,-6);
\draw (8,-5) rectangle (9,-6);
\filldraw[blue!60,draw=black] (9,-5) rectangle (10,-6);\node at(9.5,-5.5){\tiny\( \bullet \)};
\filldraw[red,draw=black] (10,-5) rectangle (11,-6);\node at(10.5,-5.5){\tiny\( \bullet \)};
\filldraw[red,draw=black] (11,-5) rectangle (12,-6);\node at(11.5,-5.5){\tiny\( \bullet \)};
\filldraw[red,draw=black] (12,-5) rectangle (13,-6);\node at(12.5,-5.5){\tiny\( \bullet \)};
\filldraw[red,draw=black] (13,-5) rectangle (14,-6);\node at(13.5,-5.5){\tiny\( \bullet \)};
\filldraw[red,draw=black] (14,-5) rectangle (15,-6);\node at(14.5,-5.5){\tiny\( \bullet \)};
\draw (0,-6) rectangle (1,-7);
\draw (1,-6) rectangle (2,-7);
\draw (2,-6) rectangle (3,-7);
\draw (3,-6) rectangle (4,-7);
\draw (4,-6) rectangle (5,-7);
\draw (5,-6) rectangle (6,-7);
\draw (6,-6) rectangle (7,-7);
\draw (7,-6) rectangle (8,-7);
\draw (8,-6) rectangle (9,-7);
\draw (9,-6) rectangle (10,-7);
\filldraw[blue!60,draw=black] (10,-6) rectangle (11,-7);\node at(10.5,-6.5){\tiny\( \bullet \)};
\filldraw[red,draw=black] (11,-6) rectangle (12,-7);\node at(11.5,-6.5){\tiny\( \bullet \)};
\filldraw[red,draw=black] (12,-6) rectangle (13,-7);\node at(12.5,-6.5){\tiny\( \bullet \)};
\filldraw[red,draw=black] (13,-6) rectangle (14,-7);\node at(13.5,-6.5){\tiny\( \bullet \)};
\filldraw[red,draw=black] (14,-6) rectangle (15,-7);\node at(14.5,-6.5){\tiny\( \bullet \)};
\draw (0,-7) rectangle (1,-8);
\draw (1,-7) rectangle (2,-8);
\draw (2,-7) rectangle (3,-8);
\draw (3,-7) rectangle (4,-8);
\draw (4,-7) rectangle (5,-8);
\draw (5,-7) rectangle (6,-8);
\draw (6,-7) rectangle (7,-8);
\draw (7,-7) rectangle (8,-8);
\draw (8,-7) rectangle (9,-8);
\draw (9,-7) rectangle (10,-8);
\draw (10,-7) rectangle (11,-8);
\draw (11,-7) rectangle (12,-8);
\filldraw[red,draw=black] (12,-7) rectangle (13,-8);\node at(12.5,-7.5){\tiny\( \bullet \)};
\filldraw[red,draw=black] (13,-7) rectangle (14,-8);\node at(13.5,-7.5){\tiny\( \bullet \)};
\filldraw[red,draw=black] (14,-7) rectangle (15,-8);\node at(14.5,-7.5){\tiny\( \bullet \)};
\draw (0,-8) rectangle (1,-9);
\draw (1,-8) rectangle (2,-9);
\draw (2,-8) rectangle (3,-9);
\draw (3,-8) rectangle (4,-9);
\draw (4,-8) rectangle (5,-9);
\draw (5,-8) rectangle (6,-9);
\draw (6,-8) rectangle (7,-9);
\draw (7,-8) rectangle (8,-9);
\draw (8,-8) rectangle (9,-9);
\draw (9,-8) rectangle (10,-9);
\draw (10,-8) rectangle (11,-9);
\draw (11,-8) rectangle (12,-9);
\draw (12,-8) rectangle (13,-9);
\filldraw[red,draw=black] (13,-8) rectangle (14,-9);\node at(13.5,-8.5){\tiny\( \bullet \)};
\filldraw[red,draw=black] (14,-8) rectangle (15,-9);\node at(14.5,-8.5){\tiny\( \bullet \)};
\draw (0,-9) rectangle (1,-10);
\draw (1,-9) rectangle (2,-10);
\draw (2,-9) rectangle (3,-10);
\draw (3,-9) rectangle (4,-10);
\draw (4,-9) rectangle (5,-10);
\draw (5,-9) rectangle (6,-10);
\draw (6,-9) rectangle (7,-10);
\draw (7,-9) rectangle (8,-10);
\draw (8,-9) rectangle (9,-10);
\draw (9,-9) rectangle (10,-10);
\draw (10,-9) rectangle (11,-10);
\draw (11,-9) rectangle (12,-10);
\draw (12,-9) rectangle (13,-10);
\draw (13,-9) rectangle (14,-10);
\filldraw[blue!60,draw=black] (14,-9) rectangle (15,-10);\node at(14.5,-9.5){\tiny\( \bullet \)};
\draw (0,-10) rectangle (1,-11);
\draw (1,-10) rectangle (2,-11);
\draw (2,-10) rectangle (3,-11);
\draw (3,-10) rectangle (4,-11);
\draw (4,-10) rectangle (5,-11);
\draw (5,-10) rectangle (6,-11);
\draw (6,-10) rectangle (7,-11);
\draw (7,-10) rectangle (8,-11);
\draw (8,-10) rectangle (9,-11);
\draw (9,-10) rectangle (10,-11);
\draw (10,-10) rectangle (11,-11);
\draw (11,-10) rectangle (12,-11);
\draw (12,-10) rectangle (13,-11);
\draw (13,-10) rectangle (14,-11);
\filldraw[blue!60,draw=black] (14,-10) rectangle (15,-11);\node at(14.5,-10.5){\tiny\( \bullet \)};
\draw (0,-11) rectangle (1,-12);
\draw (1,-11) rectangle (2,-12);
\draw (2,-11) rectangle (3,-12);
\draw (3,-11) rectangle (4,-12);
\draw (4,-11) rectangle (5,-12);
\draw (5,-11) rectangle (6,-12);
\draw (6,-11) rectangle (7,-12);
\draw (7,-11) rectangle (8,-12);
\draw (8,-11) rectangle (9,-12);
\draw (9,-11) rectangle (10,-12);
\draw (10,-11) rectangle (11,-12);
\draw (11,-11) rectangle (12,-12);
\draw (12,-11) rectangle (13,-12);
\draw (13,-11) rectangle (14,-12);
\draw (14,-11) rectangle (15,-12);
\draw (0,-12) rectangle (1,-13);
\draw (1,-12) rectangle (2,-13);
\draw (2,-12) rectangle (3,-13);
\draw (3,-12) rectangle (4,-13);
\draw (4,-12) rectangle (5,-13);
\draw (5,-12) rectangle (6,-13);
\draw (6,-12) rectangle (7,-13);
\draw (7,-12) rectangle (8,-13);
\draw (8,-12) rectangle (9,-13);
\draw (9,-12) rectangle (10,-13);
\draw (10,-12) rectangle (11,-13);
\draw (11,-12) rectangle (12,-13);
\draw (12,-12) rectangle (13,-13);
\draw (13,-12) rectangle (14,-13);
\draw (14,-12) rectangle (15,-13);
\draw (0,-13) rectangle (1,-14);
\draw (1,-13) rectangle (2,-14);
\draw (2,-13) rectangle (3,-14);
\draw (3,-13) rectangle (4,-14);
\draw (4,-13) rectangle (5,-14);
\draw (5,-13) rectangle (6,-14);
\draw (6,-13) rectangle (7,-14);
\draw (7,-13) rectangle (8,-14);
\draw (8,-13) rectangle (9,-14);
\draw (9,-13) rectangle (10,-14);
\draw (10,-13) rectangle (11,-14);
\draw (11,-13) rectangle (12,-14);
\draw (12,-13) rectangle (13,-14);
\draw (13,-13) rectangle (14,-14);
\draw (14,-13) rectangle (15,-14);
\draw (0,-14) rectangle (1,-15);
\draw (1,-14) rectangle (2,-15);
\draw (2,-14) rectangle (3,-15);
\draw (3,-14) rectangle (4,-15);
\draw (4,-14) rectangle (5,-15);
\draw (5,-14) rectangle (6,-15);
\draw (6,-14) rectangle (7,-15);
\draw (7,-14) rectangle (8,-15);
\draw (8,-14) rectangle (9,-15);
\draw (9,-14) rectangle (10,-15);
\draw (10,-14) rectangle (11,-15);
\draw (11,-14) rectangle (12,-15);
\draw (12,-14) rectangle (13,-15);
\draw (13,-14) rectangle (14,-15);
\draw (14,-14) rectangle (15,-15);
\node at(7.7,-16){\scriptsize\( \tiny{\textbf{Fig.~2.} }\) };
\draw[black,line width=0.5pt] (0,0)--(15,-15);
%
\draw[orange,line width=0.8pt] (9.5,-6)--(9.5,-9.5);
\draw[orange,line width=0.8pt] (9.5,-9.5)--(14,-9.5);
%
\draw[green,line width=0.8pt] (3.5,-2)--(3.5,-3.5);
\draw[green,line width=0.8pt] (3.5,-3.5)--(6,-3.5);
\draw[green,line width=0.8pt] (6.5,-4)--(6.5,-6.5);
\draw[green,line width=0.8pt] (6.5,-6.5)--(10,-6.5);
\draw[green,line width=0.8pt] (10.5,-7)--(10.5,-10.5);
\draw[green,line width=0.8pt] (10.5,-10.5)--(14,-10.5);
\draw[green,line width=0.8pt] (14.5,-11)--(14.5,-14.5);
\draw[green,line width=0.8pt] (14.5,-14.5)--(15,-14.5);
\draw[black,line width=0.8pt,->] (15,-9.5)--(15.8,-9.5);
\node at (16.1,-9.5){ \tiny{\textbf{$z$}} };
\draw[black,line width=0.8pt,->] (15,-10.5)--(15.8,-10.5);
\node at (16.9,-10.5){ \tiny{\textbf{$z+1$}} };
\draw[black,line width=0.8pt,->] (15,-5.5)--(15.8,-5.5);
\node at (16.3,-5.5){ \tiny{\textbf{$y_2$}} };
\end{tikzpicture}
\end{equation*}
Here in Fig.~1, the orange (resp. green) line represents the bounce path $(2,4,6,z)$ (resp. $(7,z+1, 15)$) containing $z$ (resp. $z+1$) and
\begin{align*}
{\rm top}_{\dkmu}(z+1) =&\ 7 > 2 = {\rm top}_{\dkmu}(z).
\end{align*}
In Fig.~2, the orange (resp. green) line represents the bounce path $(6,z)$ (resp. $(2,4,7,z+1, 15)$) containing $z$ (resp. $z+1$) and
\begin{align*}
\textup{top}_{\Delta^{k}(\mu)}(z)=&\ 6>2 = \textup{top}_{\Delta^{k}(\mu)}(z+1).
\end{align*}
The blue part makes $\Delta^{k}(\mu)$ satisfing (I)  and (II), respectively.
\end{enumerate}
\mlabel{re:factroot}
\end{remark}

To study the lowering operator acting on closed $k$-Schur Katalan functions, we generalize the concept of closed $k$-Schur Katalan function as follows.

\begin{defn}
For $\mu\in\tilde{\text{P}}_{\ell}^{k}$, the {\bf generalized closed $k$-Schur Katalan function} is
\begin{equation*}
\fg{\mu}{k}:=K(\Delta^{k}(\mu);\Delta^{k}(\mu);\mu).
\end{equation*}
\mlabel{def:geneCK}
\end{defn}

The following result will be needed later.

\begin{lemma}
Let $\lambda\in\pkl$, $z\in[\ell]$ and $\bott:=\bott_\lambda$ be the bottom of $\dkl$. Denote $\mu:=\mu_{\lambda,z}:= \lambda-\epsilon_z\in\tpkl$.
\begin{enumerate}
\item
If $z\in[\bott+1,\ell]$, then
$
L_{z}\fg{\lambda}{k} = \fg{\mu}{k}.
$
\mlabel{it:LamToMu1}

\item
If $z\in[\bott]$, then
$
L_{z}\fg{\lambda}{k}=\fg{\mu}{k}-
L_{\down{\dkl}{z}}\fg{\mu}{k}+
L_{\down{\dkl}{z}}\fg{\lambda}{k}.
$
\mlabel{it:LamToMu2}
\end{enumerate}
\mlabel{lem:LamToMu}
\end{lemma}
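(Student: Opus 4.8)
The plan is to start from the identity $L_z\fg{\lambda}{k}=K(\dkl;\dkl;\mu)$ recorded in~\eqref{eq:LamToMu-a} (here $\mu=\lambda-\epsilon_z$) and to rewrite the right-hand side by first comparing the root ideals $\dkl$ and $\Delta^k(\mu)$ and then invoking the structural relations of Lemma~\ref{lem:relk}. The comparison is the heart of the matter. Since $\mu=\lambda-\epsilon_z$ agrees with $\lambda$ in every coordinate except the $z$-th, where it is smaller by $1$, the defining conditions $k-\mu_i+i<j$ of $\Delta^k(\mu)$ coincide with $k-\lambda_i+i<j$ for all $i\neq z$, while in row $z$ the threshold column increases from $k-\lambda_z+z+1$ to $k-\lambda_z+z+2$. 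As $\lambda$ is a partition, these thresholds $k-\lambda_i+i+1$ are strictly increasing in $i$; hence the nonempty rows of $\dkl$ are precisely rows $1,\dots,\bott$, each such row has a unique removable root (its leftmost box), and by Remark~\ref{re:factroot}(c) the removable root in row $z$ is $(z,\down{\dkl}{z})$ with $\down{\dkl}{z}=k-\lambda_z+z+1$. Therefore $\Delta^k(\mu)=\dkl$ when $z\in[\bott+1,\ell]$, while $\Delta^k(\mu)=\dkl\setminus(z,\down{\dkl}{z})$ when $z\in[\bott]$; in the latter case $L(\dkl)=L(\Delta^k(\mu))\sqcup\{\down{\dkl}{z}\}$.

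Part~(a) then follows at once: for $z\in[\bott+1,\ell]$ the root ideals $\Delta^k(\mu)$ and $\dkl$ coincide, hence so do the multisets $L(\Delta^k(\mu))$ and $L(\dkl)$, and therefore $L_z\fg{\lambda}{k}=K(\dkl;\dkl;\mu)=K(\Delta^k(\mu);\Delta^k(\mu);\mu)=\fg{\mu}{k}$ by Definition~\ref{def:geneCK}.

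For part~(b), fix $z\in[\bott]$ and write $d:=\down{\dkl}{z}$ and $\beta:=(z,d)$, so that $\varepsilon_\beta=\epsilon_z-\epsilon_d$ and $\mu+\varepsilon_\beta=\lambda-\epsilon_d$. Applying Lemma~\ref{lem:relk}~\ref{it:relk1} to the removable root $\beta$ of $\dkl$, with multiset $L(\dkl)$ and $\gamma=\mu$, gives
\[
K(\dkl;\dkl;\mu)=K(\Delta^k(\mu);L(\dkl);\mu)+K(\dkl;\dkl;\lambda-\epsilon_d).
\]
By Remark~\ref{re:BMS}(a) the last term equals $L_d\fg{\lambda}{k}$. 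For the first term on the right I would use $L(\dkl)=L(\Delta^k(\mu))\sqcup\{d\}$ together with Lemma~\ref{lem:relk}~\ref{it:relk3} applied with $m=d$, which gives $K(\Delta^k(\mu);L(\dkl);\mu)=\fg{\mu}{k}-L_d\fg{\mu}{k}$. Combining these with~\eqref{eq:LamToMu-a} yields exactly
\[
L_z\fg{\lambda}{k}=\fg{\mu}{k}-L_{\down{\dkl}{z}}\fg{\mu}{k}+L_{\down{\dkl}{z}}\fg{\lambda}{k}.
\]

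The only step that needs genuine care is the root-ideal comparison of the first paragraph: one must confirm that decrementing $\lambda_z$ by one either leaves $\dkl$ untouched (when its row $z$ is already empty, i.e. $z>\bott$) or deletes exactly the removable box of row $z$, and in the latter case identify that box with column $\down{\dkl}{z}$. Both points hinge on the strict monotonicity of the thresholds $k-\lambda_i+i+1$, which is precisely where the hypothesis that $\lambda$ is a partition enters; with that in hand, the remainder is a direct application of Lemma~\ref{lem:relk}, and I foresee no essential obstacle.
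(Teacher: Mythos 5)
Your proof is correct and follows essentially the same route as the paper's: for $z\in[\bott+1,\ell]$ show $\Delta^k(\mu)=\dkl$ directly, and for $z\in[\bott]$ remove the removable root $\beta=(z,\down{\dkl}{z})$ via Lemma~\ref{lem:relk}~\ref{it:relk1} and then strip $\down{\dkl}{z}$ from the multiset via Lemma~\ref{lem:relk}~\ref{it:relk3}, identifying the resulting terms with Remark~\ref{re:BMS}(a). The only cosmetic difference is that the paper cites Remark~\ref{re:factroot}(c) for removability of $\beta$, whereas you re-derive it from the monotonicity of the thresholds $k-\lambda_i+i$; both are sound.
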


\begin{proof}
(\mref{it:LamToMu1}) Since $z\in[\bott+1,\ell]$, there is no root in the row $z$ in $\Delta^{k}(\lambda)$.
It follows from~(\mref{eq:dkl}) that
$k-\lambda_{z}+z \geq \ell.$ Further,
$$k-\mu_{z}+z = k-(\lambda_{z}-1)+z > k-\lambda_{z}+z \geq \ell,$$
which means that the row $z$ has no root in the root ideal $\Delta^{k}(\mu)$.
Hence $\Delta^{k}(\lambda) = \Delta^{k}(\mu)$ and so
\begin{align*}
L_z \fg{\lambda}{k}
=&\ K(\dkl;\dkl;\mu)\hspace{1cm}(\text{by~(\ref{eq:LamToMu-a}) and $\mu=\lambda-\epsilon_z$})\\
=&\ K(\dkmu;\dkmu;\mu)\\
=&\ \fg{\mu}{k} \hspace{1cm} (\text{by Definition~\ref{def:geneCK}}).
\end{align*}

(\mref{it:LamToMu2})
If $z\in[\bott]$, then $\beta:=(z,\text{down}_{\Delta^{k}(\lambda)}(z))$ is a removable root in $\Delta^{k}(\lambda)$ by Remark~\mref{re:factroot}~(c), and so $\dkl\setminus\beta = \dkmu$. In terms of~(\mref{eq:Lroot}),
\begin{align*}
L(\dkmu) = L(\dkl\setminus\beta)
= L(\dkl)\setminus L(\beta)=L(\dkl)\setminus \text{down}_{\Delta^{k}(\lambda)}(z).
\end{align*}
Hence
\begin{align*}
L_z\fg{\lambda}{k} =&\ K(\dkl;\dkl;\mu)\hspace{1cm}(\text{by~(\ref{eq:LamToMu-a}) and $\mu=\lambda-\epsilon_z$}) \\
=&\ K(\dkl\setminus\beta;\dkl;\mu)+ K(\dkl;\dkl;\mu+\varepsilon_{\beta})\hspace{1cm} (\text{by Lemma~\ref{lem:relk}~\ref{it:relk1}}) \\
=&\ K(\dkl\setminus\beta;\dkl;\mu)+ K\big(\dkl;\dkl;\lambda-\epsilon_z+\epsilon_z-\epsilon_{\down{\dkl}{z}}\big)\\
=&\ K(\dkl\setminus\beta;\dkl;\mu) + L_{\down{\dkl}{z}}K(\dkl;\dkl;\lambda)\\
&\ \hspace{5cm} (\text{by Remark~\ref{re:BMS}~(a) for the second summand})\\
=&\ K(\dkl\setminus\beta;\dkl;\mu) + L_{\down{\dkl}{z}}\fg{\lambda}{k}\\
&\ \hspace{4cm} (\text{by Definition~\ref{defn:cKataFunc} for the second summand})\\
=&\ K(\dkl\setminus\beta;L(\dkl)\setminus\down{\dkl}{z};\mu)\\
&\ -
K(\dkl\setminus\beta;L(\dkl)\setminus\down{\dkl}{z};\mu-\epsilon_{\down{\dkl}{z}}) + L_{\down{\dkl}{z}}\fg{\lambda}{k}\\
&\ \hspace{4cm} (\text{by Lemma~\ref{lem:relk}~\ref{it:relk3} for the first summand})\\
=&\ K(\dkmu;\dkmu;\mu)-K(\dkmu;\dkmu;\mu-\epsilon_{\down{\dkl}{z}})+ L_{\down{\dkl}{z}}\fg{\lambda}{k}\\
&\ \hspace{1cm} (\text{by $\dkl\setminus\beta = \dkmu$ and $L(\dkl)\setminus\down{\dkl}{z} = L(\dkmu)$})\\
=&\ K(\dkmu;\dkmu;\mu)-L_{\down{\dkl}{z}}K(\dkmu;\dkmu;\mu)+ L_{\down{\dkl}{z}}\fg{\lambda}{k}\\
&\ \hspace{5cm} (\text{by Remark~\ref{re:BMS}~(a) for the second summand})\\
=&\ \fg{\mu}{k}-
L_{\down{\dkl}{z}}\fg{\mu}{k}+
L_{\down{\dkl}{z}}\fg{\lambda}{k} \hspace{1cm} (\text{by Definition~\ref{def:geneCK}}).
\end{align*}
This completes the proof.
\end{proof}

In Lemma~\mref{lem:LamToMu},
if $z = \ell$ or $\lambda_z>\lambda_{z+1}$ with $z\in[\ell-1]$, then $\mu\in{\rm P}^k_\ell$.
Now consider the case of $\lambda_z = \lambda_{z+1}$. Then $\mu\in\tpkl$ and $\mu\notin\pkl$ by
$$\mu_z = \lambda_z -1 = \lambda_{z+1} -1 =\mu_{z+1}-1.$$
The following result is a straightening of $L_z\fg{\lambda}{k}$ in Lemma~\mref{lem:LamToMu}, in the sense that
the subscripts $\mu-\epsilon_{z+1}$ and $\mu+\epsilon_{\textup{up}_{\Delta^{k}(\mu)}
(y+1)}-\epsilon_{z+1}$ are more like partitions, compared to the original $\mu\in \tpkl$.

\begin{lemma}
Let $\mu\in\tpkl$ and $z\in[\ell-1]$ such that $\mu_{z}+1=\mu_{z+1}$ and $\mu_{x}\geq\mu_{x+1}$ for all $x\in[\ell-1]\setminus z$.
\begin{enumerate}
\item If ${\rm top}_{\dkmu}(z+1)> {\rm top}_{\dkmu}(z)$,
then $\fg{\mu}{k}=\fg{\mu-\epsilon_{z+1}}{k}$.
\mlabel{it:strai1}

\item
If $y:=\textup{top}_{\Delta^{k}(\mu)}(z)>
\textup{top}_{\Delta^{k}(\mu)}(z+1)$,
then
$\fg{\mu}{k} =\fg{\mu+\epsilon_{\textup{up}_{\Delta^{k}(\mu)}
(y+1)}-\epsilon_{z+1}}{k}.
$ \mlabel{it:strai2}
\end{enumerate}
\mlabel{lem:strai}
\end{lemma}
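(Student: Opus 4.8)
The plan is to reduce both statements to the Mirror Lemmas applied to the presentation $\fg{\mu}{k}=K(\dkmu;\dkmu;\mu)$ of Definition~\ref{def:geneCK}, and then to reconcile the root ideal produced by the Mirror Lemma with the root ideal $\Delta^k(\cdot)$ carried by the target generalized closed $k$-Schur Katalan function by means of the relations of Lemma~\ref{lem:relk}. All of the structural input -- walls, ceilings, mirrors, and the positions of $\textup{top},\textup{up},\textup{down}$ in $\dkmu$ -- is supplied by Remark~\ref{re:factroot}~(b),(d); in particular the wall in rows $z,z+1$ and the fact that $\mu-\epsilon_{z+1}$ and $\mu+\epsilon_{\uup{\dkmu}{y+1}}-\epsilon_{z+1}$ lie in $\tpkl$ are immediate, so the two right-hand sides are defined.

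For part~(a) I would use the hypothesis $\textup{top}_{\dkmu}(z+1)>\textup{top}_{\dkmu}(z)$ of case (I) to produce a suitable index $y$ in the bounce path of $z$ -- essentially $y=\uup{\dkmu}{z}$, the bottom vertex of the chain of mirrors sitting just above $z$ -- for which the six hypotheses of Lemma~\ref{lem:mirr1} hold with $\Psi=\dkmu$, $M=L(\dkmu)$, $\gamma=\mu$: conditions (3) and (5) are the wall in rows $z,z+1$ and the dip $\mu_z+1=\mu_{z+1}$; conditions (2), (4), (6) follow from Remark~\ref{re:factroot}~(d) together with the weak decrease of $\mu$ off $z$; and condition (1), the ceiling in columns $y,y+1$, is the genuinely case-sensitive one. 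Since $M=L(\Psi)$, condition (1) forces $m_M(y)=m_M(y+1)$, so Lemma~\ref{lem:mirr1} gives $\fg{\mu}{k}=K(\dkmu;\dkmu;\mu-\epsilon_{z+1})$. If $\down{\dkmu}{z+1}>\ell$ then $\Delta^k(\mu-\epsilon_{z+1})=\dkmu$ and there is nothing more to do; otherwise, with the removable root $\beta:=\big(z+1,\down{\dkmu}{z+1}\big)$ one checks $\Delta^k(\mu-\epsilon_{z+1})=\dkmu\setminus\beta$ and $L(\dkmu)=L(\Delta^k(\mu-\epsilon_{z+1}))\sqcup L(\beta)$, and then Lemma~\ref{lem:relk}~\ref{it:relk3},~\ref{it:relk1} and~\ref{it:relk4} telescope $\fg{\mu}{k}-\fg{\mu-\epsilon_{z+1}}{k}$ down to a single Katalan function which is readily seen to vanish (by Lemma~\ref{lem:K+K=0} or the first alternative of Lemma~\ref{lem:mirr1}, using once more the wall and the ceiling around columns $z,z+1$).

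Part~(b) runs in parallel with Lemma~\ref{lem:mirr2} replacing Lemma~\ref{lem:mirr1}: in the case-(II) situation $y:=\textup{top}_{\dkmu}(z)>\textup{top}_{\dkmu}(z+1)$, hypotheses (2)--(6) of Lemma~\ref{lem:mirr2} are verified exactly as above, while hypothesis (1) -- the existence of the addable root $\alpha=\big(\uup{\dkmu}{y+1},y\big)$ and the removable root $\big(\uup{\dkmu}{y+1},y+1\big)$ -- is automatic from the definition of $\textup{up}$ and the monotonicity of $\mu$ off $z$. Lemma~\ref{lem:mirr2} then gives $\fg{\mu}{k}=K\big(\dkmu\cup\alpha;\,L(\dkmu)\sqcup y;\,\rho\big)$ with $\rho:=\mu+\epsilon_{\uup{\dkmu}{y+1}}-\epsilon_{z+1}$; since $\alpha$ sits in a column $<z+1$ one checks $\Delta^k(\rho)=(\dkmu\cup\alpha)\setminus\beta$ with the same $\beta=\big(z+1,\down{\dkmu}{z+1}\big)$ and $L(\dkmu\cup\alpha)=L(\Delta^k(\rho))\sqcup L(\beta)$, so the telescoping-and-vanishing step of part~(a) applies verbatim and finishes the proof.

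The step I expect to be the main obstacle is the verification of the ceiling conditions -- both hypothesis (1) of the Mirror Lemmas and the ceiling around columns $z,z+1$ used to annihilate the residual Katalan function -- because this is precisely where the comparison of $\textup{top}_{\dkmu}(z)$ and $\textup{top}_{\dkmu}(z+1)$ from Remark~\ref{re:factroot}~(d) is forced into the argument, and because it requires tracking carefully how $\dkmu$ deforms when the dip at $z$ is straightened and, in part~(b), when the extra root at $\uup{\dkmu}{y+1}$ is created. Everything else is routine, if somewhat lengthy, bookkeeping with the relations of Lemma~\ref{lem:relk}.
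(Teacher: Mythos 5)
Your proposal captures the right high-level tools (Mirror Lemmas, the straightening relations of Lemma~\ref{lem:relk}, and the identification of $\Delta^k(\cdot)$ of the modified weight with a root ideal obtained by adding/removing one root), and the case split on whether $\text{down}_{\dkmu}(z+1)$ exists is equivalent to the paper's split on $z\in[\bott-1]$ versus $z\in[\bott+1,\ell-1]$. But there are three substantive gaps.

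First, the choice of $y$ in part~(a) is wrong. You set $y=\text{up}_{\dkmu}(z)$, the \emph{bottom} of the mirror chain above $z$. The Mirror Lemma needs a \emph{ceiling} in columns $y,y+1$, and that can only hold at the \emph{top} of the mirror chain: if there is a mirror in rows $\text{up}^2_{\dkmu}(z),\text{up}^2_{\dkmu}(z)+1$ (i.e.\ the chain has length $\geq 2$), then the minimal roots in columns $\text{up}_{\dkmu}(z)$ and $\text{up}_{\dkmu}(z)+1$ are $(\text{up}^2_{\dkmu}(z),\text{up}_{\dkmu}(z))$ and $(\text{up}^2_{\dkmu}(z)+1,\text{up}_{\dkmu}(z)+1)$, so those two columns have lengths differing by $1$ and there is no ceiling at $\text{up}_{\dkmu}(z)$. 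The paper takes $y$ at the top of the chain (written as $\text{top}_{\dkmu}(z+1)$, one more than the top of the chain of mirrors), which is the only place the ceiling condition can be satisfied.

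Second, the terminal step of Case~2 of part~(a) is not a ``telescope to a single vanishing Katalan function.'' The manipulations with Lemma~\ref{lem:relk}\ref{it:relk1},\ref{it:relk3} and Remark~\ref{re:BMS}~(a) yield the fixed-point identity $\fg{\mu}{k}-\fg{\mu-\epsilon_{z+1}}{k}=L_{\text{down}_{\dkmu}(z+1)}\bigl(\fg{\mu}{k}-\fg{\mu-\epsilon_{z+1}}{k}\bigr)$, in which the same difference reappears on the right-hand side. Nothing vanishes by Lemma~\ref{lem:K+K=0} or Lemma~\ref{lem:mirr1} at this stage; one must iterate this identity and invoke the nilpotency result Proposition~\ref{prop:multin} to conclude the difference is zero. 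Your proposal does not mention Proposition~\ref{prop:multin} at all, yet it is indispensable.

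Third, part~(b) does \emph{not} run verbatim parallel to part~(a). After Lemma~\ref{lem:mirr2} and the same straightening moves one reaches a three-term expression
$\fg{\mu}{k}=\fg{\rho}{k}-L_{\text{down}_{\dkmu}(z+1)}\fg{\rho}{k}+L_{\text{down}_{\dkmu}(z+1)}\fg{\nu}{k}$
with $\rho=\mu+\epsilon_{\text{up}_{\dkmu}(y+1)}-\epsilon_{z+1}$ and $\nu=\mu+\epsilon_{\text{up}_{\dkmu}(y+1)}$. This is \emph{not} a fixed-point equation in $\fg{\mu}{k}-\fg{\rho}{k}$: the third term carries $\fg{\nu}{k}$, not $\fg{\mu}{k}$. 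The paper closes this by applying part~(a) to $\nu$ (which now satisfies $\nu_z+1=\nu_{z+1}$ and falls into case~(I)), giving $\fg{\nu}{k}=\fg{\rho}{k}$ and making the last two terms cancel. This recursive dependence of part~(b) on part~(a) is essential and is absent from your proposal.

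Until these three points are repaired, the proposal is not a correct proof.
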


Notice that ${\rm top}_{\dkmu}(z+1)\neq {\rm top}_{\dkmu}(z)$ by~(\mref{eq:noteq}).

\begin{proof}[proof of Lemma~\mref{lem:strai}]
By Remark~\mref{re:factroot}~(b), the hypothesis $\mu_{z}+1=\mu_{z+1}$ implies that there is a wall in rows $z,z+1$ in $\dkmu$. Then
\begin{equation}
z\in[\bott-1]\,\text{ or }\,z\in[\bott+1,\ell-1],
\mlabel{eq:strai-tcase}
\end{equation}
where $\bott:=\bott_\mu$ is the bottom of the root ideal $\dkmu$.

(\mref{it:strai1})
Suppose $y:={\rm top}_{\dkmu}(z+1)> {\rm top}_{\dkmu}(z)$. Then the root ideal $\Delta^{k}(\mu)$ has a ceiling in columns $y,y+1$, and $m_{\dkmu}(y) = m_{\dkmu}(y+1)$. According to~(\mref{eq:strai-tcase}), we have the following two cases to consider.

{\bf Case~1.} $z\in[\bott+1,\ell-1]$. Then $\De{k}{\mu} = \De{k}{\mu-\epsilon_{z+1}}$ and so
\begin{align*}
\fg{\mu}{k}
=&\ K(\Delta^{k}(\mu);\Delta^{k}(\mu);\mu) \hspace{1cm} (\text{by Definition~\ref{def:geneCK}})\\
=&\ K(\Delta^{k}(\mu);\Delta^{k}(\mu);\mu-\epsilon_{z+1})\hspace{1cm} (\text{by Lemma~\ref{lem:mirr1}})\\
=&\ K(\Delta^{k}(\mu-\epsilon_{z+1});\Delta^{k}(\mu-\epsilon_{z+1});
\mu-\epsilon_{z+1})\\
=&\ \fg{\mu-\epsilon_{z+1}}{k}\hspace{1cm} (\text{by Definition~\ref{def:geneCK}}).
\end{align*}

{\bf Case~2.} $z\in[\bott-1]$. Then $z+1\in[\bott]$ and further by Remark~\mref{re:factroot}~(c),
\begin{equation}
\beta:=(z+1,\down{\dkmu}{z+1})
\mlabel{eq:beta}
\end{equation}
is a removable root in the root ideal $\dkmu$. Using Remark~\mref{re:factroot}~(a),
$$\dkmu\setminus\beta = \De{k}{\mu-\epsilon_{z+1}}, \quad L(\dkmu)\setminus \down{\dkmu}{z+1} = L(\De{k}{\mu-\epsilon_{z+1}}).$$
Hence
\begin{align*}
\fg{\mu}{k} =&\ K(\dkmu;\dkmu;\mu)\hspace{1cm} (\text{by Definition~\ref{def:geneCK}}) \\
=&\ K(\dkmu;\dkmu;\mu-\epsilon_{z+1})\hspace{1cm} (\text{by Lemma~\ref{lem:mirr1}})\\
=&\ K(\dkmu\setminus\beta;\dkmu;\mu-\epsilon_{z+1})
+K(\dkmu;\dkmu;\mu-\epsilon_{z+1}+\varepsilon_{\beta})\hspace{1cm} (\text{by Lemma~\ref{lem:relk}~\ref{it:relk1}})\\
=&\ K(\dkmu\setminus\beta;\dkmu;\mu-\epsilon_{z+1})
+K(\dkmu;\dkmu;\mu-\epsilon_{\down{\dkmu}{z+1}})\hspace{1cm} (\text{by~(\ref{eq:beta})})\\
=&\ K(\dkmu\setminus\beta;\dkmu;\mu-\epsilon_{z+1})
+L_{\down{\dkmu}{z+1}}K(\dkmu;\dkmu;\mu)\\
& \hspace{6cm} (\text{by Remark~\ref{re:BMS}~(a) for the second summand})\\
=&\ K(\dkmu\setminus\beta;\dkmu;\mu-\epsilon_{z+1})
+L_{\down{\dkmu}{z+1}}\fg{\mu}{k}\\
& \hspace{6cm} (\text{by Definition~\ref{def:geneCK} for the second summand})\\
=&\ K(\dkmu\setminus\beta;L(\dkmu)\setminus\down{\dkmu}{z+1};\mu-\epsilon_{z+1})\\
&\ -K(\dkmu\setminus\beta;L(\dkmu)\setminus\down{\dkmu}{z+1};\mu-\epsilon_{z+1}
-\epsilon_{\down{\dkmu}{z+1}})\\
&\ +L_{\down{\dkmu}{z+1}}\fg{\mu}{k} \hspace{3cm} (\text{by Lemma~\ref{lem:relk}~\ref{it:relk3} for the first summand})\\
=&\ K(\De{k}{\mu-\epsilon_{z+1}};\De{k}{\mu-\epsilon_{z+1}};\mu-\epsilon_{z+1})\\
&\ -K(\De{k}{\mu-\epsilon_{z+1}};\De{k}{\mu-\epsilon_{z+1}};\mu-\epsilon_{z+1}
-\epsilon_{\down{\dkmu}{z+1}})+L_{\down{\dkmu}{z+1}}\fg{\mu}{k}\\
=&\ K(\De{k}{\mu-\epsilon_{z+1}};\De{k}{\mu-\epsilon_{z+1}};\mu-\epsilon_{z+1})\\
&\ -L_{\down{\dkmu}{z+1}}K(\De{k}{\mu-\epsilon_{z+1}};
\De{k}{\mu-\epsilon_{z+1}};\mu-\epsilon_{z+1})+L_{\down{\dkmu}{z+1}}\fg{\mu}{k}\\
&\ \hspace{5cm} (\text{the second summand uses Remark~\ref{re:BMS}~(a)})\\
=&\ \fg{\mu-\epsilon_{z+1}}{k}-L_{\down{\dkmu}{z+1}}\fg{\mu-\epsilon_{z+1}}{k}
+L_{\down{\dkmu}{z+1}}\fg{\mu}{k}\hspace{1cm} (\text{by Definition~\ref{def:geneCK}}),
\end{align*}
which implies that
\begin{align*}
\fg{\mu}{k}-\fg{\mu-\epsilon_{z+1}}{k} = &\ L_{\down{\dkmu}{z+1}}(\fg{\mu}{k}-\fg{\mu-\epsilon_{z+1}}{k})\\
=&\ L^2_{\down{\dkmu}{z+1}}(\fg{\mu}{k}-\fg{\mu-\epsilon_{z+1}}{k})\\
=&\ \cdots = L^m_{\down{\dkmu}{z+1}}(\fg{\mu}{k}-\fg{\mu-\epsilon_{z+1}}{k})\\
=&\ 0\hspace{1cm} (\text{by Proposition~\ref{prop:multin}})
\end{align*}
for some $$m>\mu_{\down{\dkmu}{z+1}}+\ell-\down{\dkmu}{z+1}.$$

(\mref{it:strai2}) Suppose $y:=\textup{top}_{\Delta^{k}(\mu)}(z)>
\textup{top}_{\Delta^{k}(\mu)}(z+1)$.
Then $$\alpha :=(\text{up}_{\Delta^{k}(\mu)}(y+1),y)$$ is an addable root of $\dkmu$, the root $({\rm up}_{\dkmu}(y+1),y+1)$ is removable in $\dkmu$, and $$m_{\dkmu}(x)+1 = m_{\dkmu}(x+1), \quad \forall x\in{\rm path}_{\dkmu}(y,z).$$
Again, we divide our proof into the following two cases by~(\mref{eq:strai-tcase}).

{\bf Case~1.} $z\in[\bott+1,\ell-1]$. Then
\begin{equation}
\begin{aligned}
\Delta^{k}(\mu)\cup\alpha =&\ \Delta^{k}(\mu+\epsilon_{\text{up}_{\Delta^{k}(\mu)}(y+1)}) = \Delta^{k}(\mu+\epsilon_{\text{up}_{\Delta^{k}(\mu)}(y+1)}-\epsilon_{z+1}),\\
L(\Delta^{k}(\mu))\sqcup y =&\ L\big(\Delta^{k}(\mu)\cup\alpha\big) = L\Big(\Delta^{k}(\mu+\epsilon_{\text{up}_{\Delta^{k}(\mu)}(y+1)}-
\epsilon_{z+1})\Big).
\end{aligned}
\mlabel{eq:dmualpha}
\end{equation}
Hence
\allowdisplaybreaks{
\begin{align*}
\fg{\mu}{k}
=&\ K(\Delta^{k}(\mu);\Delta^{k}(\mu);\mu) \hspace{1cm} (\text{by Definition~\ref{def:geneCK}})\\
=&\ K\Big(\Delta^{k}(\mu)\cup\alpha; L(\Delta^{k}(\mu))\sqcup y;\mu+
\epsilon_{\text{up}_{\Delta^{k}(\mu)}(y+1)}-\epsilon_{z+1}\Big)\hspace{1cm} (\text{by Lemma~\ref{lem:mirr2}})\\
=\ & K\bigg(\Delta^{k}(\mu+
\epsilon_{\text{up}_{\Delta^{k}(\mu)}(y+1)}-\epsilon_{z+1}); L\Big( \Delta^{k}(\mu+
\epsilon_{\text{up}_{\Delta^{k}(\mu)}(y+1)}-\epsilon_{z+1}) \Big);\mu+
\epsilon_{\text{up}_{\Delta^{k}(\mu)}(y+1)}-\epsilon_{z+1}\bigg)  \hspace{0.5cm} (\text{by~(\ref{eq:dmualpha})})\\
=&\ \fg{\mu+\epsilon_{\text{up}_{\Delta^{k}(\mu)}(y+1)}-\epsilon_{z+1}}{k}\hspace{1cm} (\text{by Definition~\ref{def:geneCK}}).
\end{align*}
}

{\bf Case~2.} $z\in[\bott-1]$. Then $$\beta:=(z+1,\down{\dkmu}{z+1})$$ is a removable root in $\dkmu$. Further,
\begin{align}
\dkmu\cup\alpha =&\  \De{k}{\mu+\epsilon_{{\rm up}_{\dkmu}(y+1)}},\mlabel{eq:strai-2-2-1}\\
\dkmu\cup\alpha\setminus\beta =&\  \De{k}{\mu+\epsilon_{{\rm up}_{\dkmu}(y+1)}-\epsilon_{z+1}},\label{eq:strai-2-2-2}\\
L(\dkmu)\sqcup y =&\  L\big(\dkmu\cup\alpha\big) = L\big(\De{k}{\mu+\epsilon_{{\rm up}_{\dkmu}(y+1)}}\big),\label{eq:strai-2-2-3}\\
L(\dkmu)\sqcup y\setminus \down{\dkmu}{z+1} =&\  L\big( \dkmu\cup\alpha\setminus\beta \big) = L\big(\De{k}{\mu+\epsilon_{{\rm up}_{\dkmu}(y+1)}-\epsilon_{z+1}}\big).\label{eq:strai-2-2-4}
\end{align}
Hence
\begin{align}
\fg{\mu}{k} =&\ K(\Delta^{k}(\mu);\Delta^{k}(\mu);\mu) \hspace{1cm} (\text{by Definition~\ref{def:geneCK}})\notag\\
=&\ K\Big(\Delta^{k}(\mu)\cup\alpha; L(\Delta^{k}(\mu))\sqcup y;\mu+
\epsilon_{\text{up}_{\Delta^{k}(\mu)}(y+1)}-\epsilon_{z+1}\Big)\hspace{1cm} (\text{by Lemma~\ref{lem:mirr2}})\notag\\
=&\ K\Big(\Delta^{k}(\mu)\cup\alpha\setminus\beta; L(\Delta^{k}(\mu))\sqcup y;\mu+
\epsilon_{\text{up}_{\Delta^{k}(\mu)}(y+1)}-\epsilon_{z+1}\Big)\notag\\
&\ + K\Big(\Delta^{k}(\mu)\cup\alpha; L(\Delta^{k}(\mu))\sqcup y;\mu+
\epsilon_{\text{up}_{\Delta^{k}(\mu)}(y+1)}-\epsilon_{z+1}
+\varepsilon_{\beta}\Big) \hspace{1cm} (\text{by Lemma~\ref{lem:relk}~\ref{it:relk1}})\notag\\
=&\ K\Big(\Delta^{k}(\mu)\cup\alpha\setminus\beta; L(\Delta^{k}(\mu))\sqcup y;\mu+
\epsilon_{\text{up}_{\Delta^{k}(\mu)}(y+1)}-\epsilon_{z+1}\Big)\notag\\
&\ + K\Big(\De{k}{\mu+\epsilon_{{\rm up}_{\dkmu}(y+1)}}; L\big(\De{k}{\mu+\epsilon_{{\rm up}_{\dkmu}(y+1)}}\big);\mu+
\epsilon_{\text{up}_{\Delta^{k}(\mu)}(y+1)}-\epsilon_{\down{\dkmu}{z+1}}\Big)
\notag\\
& \hspace{7cm} (\text{by ~(\ref{eq:strai-2-2-1}) and~(\ref{eq:strai-2-2-3}) for the second summand})\notag\\
=&\ K\Big(\Delta^{k}(\mu)\cup\alpha\setminus\beta; L(\Delta^{k}(\mu))\sqcup y;\mu+
\epsilon_{\text{up}_{\Delta^{k}(\mu)}(y+1)}-\epsilon_{z+1}\Big) + L_{\down{\dkmu}{z+1}}\fg{\mu+
\epsilon_{\text{up}_{\Delta^{k}(\mu)}(y+1)}}{k}\notag\\
& \hspace{4.5cm} (\text{by Remark~\ref{re:BMS}~(a) and Definition~\ref{def:geneCK} for the second summand})\notag\\
=&\ K\Big(\Delta^{k}(\mu)\cup\alpha\setminus\beta; L(\Delta^{k}(\mu))\sqcup y\setminus \down{\dkmu}{z+1};\mu+
\epsilon_{\text{up}_{\Delta^{k}(\mu)}(y+1)}-\epsilon_{z+1}\Big)\notag\\
& - K\Big(\Delta^{k}(\mu)\cup\alpha\setminus\beta; L(\Delta^{k}(\mu))\sqcup y\setminus \down{\dkmu}{z+1};\mu+
\epsilon_{\text{up}_{\Delta^{k}(\mu)}(y+1)}-\epsilon_{z+1}
-\epsilon_{\down{\dkmu}{z+1}}\Big)\notag\\
& + L_{\down{\dkmu}{z+1}}\fg{\mu+
\epsilon_{\text{up}_{\Delta^{k}(\mu)}(y+1)}}{k} \hspace{3.5cm} (\text{by Lemma~\ref{lem:relk}~\ref{it:relk3} for the first summand})\notag\\
=&\ K\Big(\De{k}{\mu+\epsilon_{{\rm up}_{\dkmu}(y+1)}-\epsilon_{z+1}}; \De{k}{\mu+\epsilon_{{\rm up}_{\dkmu}(y+1)}-\epsilon_{z+1}};\mu+
\epsilon_{\text{up}_{\Delta^{k}(\mu)}(y+1)}-\epsilon_{z+1}\Big)\notag\\
& - K\Big(\De{k}{\mu+\epsilon_{{\rm up}_{\dkmu}(y+1)}-\epsilon_{z+1}}; \De{k}{\mu+\epsilon_{{\rm up}_{\dkmu}(y+1)}-\epsilon_{z+1}};\mu+
\epsilon_{\text{up}_{\Delta^{k}(\mu)}(y+1)}-\epsilon_{z+1}
-\epsilon_{\down{\dkmu}{z+1}}\Big)\notag\\
& + L_{\down{\dkmu}{z+1}}\fg{\mu+
\epsilon_{\text{up}_{\Delta^{k}(\mu)}(y+1)}}{k} \hspace{7.5cm} (\text{by~(\ref{eq:strai-2-2-2}) and~(\ref{eq:strai-2-2-4})})\notag\\
=&\ K\Big(\De{k}{\mu+\epsilon_{{\rm up}_{\dkmu}(y+1)}-\epsilon_{z+1}}; \De{k}{\mu+\epsilon_{{\rm up}_{\dkmu}(y+1)}-\epsilon_{z+1}};\mu+
\epsilon_{\text{up}_{\Delta^{k}(\mu)}(y+1)}-\epsilon_{z+1}\Big)\notag\\
& - L_{\down{\dkmu}{z+1}}K\Big(\De{k}{\mu+\epsilon_{{\rm up}_{\dkmu}(y+1)}-\epsilon_{z+1}}; \De{k}{\mu+\epsilon_{{\rm up}_{\dkmu}(y+1)}-\epsilon_{z+1}};\mu+
\epsilon_{\text{up}_{\Delta^{k}(\mu)}(y+1)}-\epsilon_{z+1}\Big)\notag\\
& + L_{\down{\dkmu}{z+1}}\fg{\mu+
\epsilon_{\text{up}_{\Delta^{k}(\mu)}(y+1)}}{k} \hspace{3cm} (\text{by Remark~\ref{re:BMS}~(a) for the second summand})\notag\\
=&\ \fg{\mu+\epsilon_{{\rm up}_{\dkmu}(y+1)}-\epsilon_{z+1}}{k} - L_{\down{\dkmu}{z+1}} \fg{\mu+\epsilon_{{\rm up}_{\dkmu}(y+1)}-\epsilon_{z+1}}{k}+ L_{\down{\dkmu}{z+1}}\fg{\mu+
\epsilon_{\text{up}_{\Delta^{k}(\mu)}(y+1)}}{k}  \hspace{0.3cm} (\text{by Definition~\ref{def:geneCK}}).\mlabel{eq:strai-2-3}
\end{align}
Consider the third summand in~(\mref{eq:strai-2-3}). Denote $\nu:=\mu+
\epsilon_{\text{up}_{\Delta^{k}(\mu)}(y+1)}$. Then
$$\nu_{z}+1= \mu_z+1 = \mu_{z+1} =\nu_{z+1}.$$
It follows from
$y:=\textup{top}_{\Delta^{k}(\mu)}(z)>
\textup{top}_{\Delta^{k}(\mu)}(z+1)$
that
$${\rm top}_{\De{k}{\nu}}(z+1)-1 > {\rm top}_{\De{k}{\nu}}(z), \quad \nu_{x}\geq\nu_{x+1},\quad\forall x\in[\ell-1]\setminus z.$$
Hence
\begin{align*}
\fg{\mu+
\epsilon_{\text{up}_{\Delta^{k}(\mu)}(y+1)}}{k} =&\ \fg{\nu}{k}\hspace{1cm}(\text{by $\nu:=\mu+
\epsilon_{\text{up}_{\Delta^{k}(\mu)}(y+1)}$})\\
=&\ \fg{\nu-\epsilon_{z+1}}{k}\hspace{1cm}(\text{by Lemma~\ref{lem:strai}~\ref{it:strai1}})\\
=&\ \fg{\mu+\epsilon_{{\rm up}_{\dkmu}(y+1)}-\epsilon_{z+1}}{k}.
\end{align*}
Plugging the above equation into the third summand of~(\mref{eq:strai-2-3}) yields killing the last two summands and obtaining
the required result.
\end{proof}

We arrive at our main result in this section.

\begin{theorem}
Let $\lambda\in\pkl$ and $z\in[\ell]$.
Then there exists $\mu\in\pkl$ such that
\begin{equation}
L_{z}\fg{\lambda}{k} =
\left\{
\begin{array}{ll}
 \fg{\mu}{k}, & \text{if $z\in[\bott+1,\ell]$},\\
 \fg{\mu}{k}-
L_{\down{\dkl}{z}}\fg{\mu}{k}+
L_{\down{\dkl}{z}}\fg{\lambda}{k}, & \text{if $z\in[\bott]$}.
\end{array}
\right.
\mlabel{eq:finalform-want}
\end{equation}
Moreover, if $z = \ell$ or $\lambda_z>\lambda_{z+1}$ with $z\in[\ell-1]$, then we can take $\mu := \lambda-\epsilon_z$.
\mlabel{thm:finalform}
\end{theorem}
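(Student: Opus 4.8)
The plan is to combine Lemma~\ref{lem:LamToMu}, which rewrites $L_z\fg{\lambda}{k}$ already in the exact shape of~\eqref{eq:finalform-want} except that the index $\mu_0:=\lambda-\epsilon_z$ appearing there is only guaranteed to lie in $\tpkl$, with Lemma~\ref{lem:strai}, which straightens such an index toward an honest $k$-bounded partition. If $z=\ell$, or if $z\in[\ell-1]$ and $\lambda_z>\lambda_{z+1}$, then $\mu:=\lambda-\epsilon_z$ already lies in $\pkl$, and the two cases of Lemma~\ref{lem:LamToMu} (according as $z\in[\bott+1,\ell]$ or $z\in[\bott]$, with $\bott$ the bottom of $\dkl$) are verbatim the two lines of~\eqref{eq:finalform-want}. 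This settles the ``moreover'' clause, and it remains to treat the case $z\in[\ell-1]$ with $\lambda_z=\lambda_{z+1}$.

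In that case $\mu_0=\lambda-\epsilon_z$ lies in $\tpkl\setminus\pkl$: one checks immediately that $(\mu_0)_z+1=(\mu_0)_{z+1}$ while $(\mu_0)_x\ge(\mu_0)_{x+1}$ for every $x\in[\ell-1]\setminus z$, the unique failure of weak decrease occurring at $x=z$ and being off by exactly one. These are precisely the hypotheses of Lemma~\ref{lem:strai}, and by~\eqref{eq:noteq} exactly one of its two alternatives applies; it produces an index $\mu_1$ with $\fg{\mu_0}{k}=\fg{\mu_1}{k}$, equal to $\mu_0-\epsilon_{z+1}$ or to $\mu_0+\epsilon_{{\rm up}_{\De{k}{\mu_0}}(y+1)}-\epsilon_{z+1}$. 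One then verifies that $\mu_1$ is either already in $\pkl$ or is again of the form required by Lemma~\ref{lem:strai}, so the step can be repeated. Iterating yields $\mu_0,\mu_1,\mu_2,\dots$ with $\fg{\mu_0}{k}=\fg{\mu_1}{k}=\cdots$, and since a suitable statistic of $\mu_i$ measuring its distance from a partition strictly decreases, the sequence stabilizes at some $\mu:=\mu_N\in\pkl$; thus $\fg{\mu_0}{k}=\fg{\mu}{k}$.

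It remains to transport this identity across the formulas of Lemma~\ref{lem:LamToMu}. Since lowering operators are additive (Remark~\ref{re:BMS}(a), together with the additivity used in the proof of Proposition~\ref{prop:multin}), $\fg{\mu_0}{k}=\fg{\mu}{k}$ forces $L_{\down{\dkl}{z}}\fg{\mu_0}{k}=L_{\down{\dkl}{z}}\fg{\mu}{k}$; substituting both equalities into Lemma~\ref{lem:LamToMu} gives~\eqref{eq:finalform-want} for this $\mu\in\pkl$, completing the proof. The main obstacle is the iteration in the second paragraph: one must show that each application of Lemma~\ref{lem:strai} keeps the index in the class where that lemma is applicable (in case~(II) this means checking that raising the entry in row ${\rm up}_{\De{k}{\mu_i}}(y+1)$ does not destroy the needed wall, mirror, and ceiling structure of $\De{k}{\mu_i}$), and that the process really terminates, i.e.\ exhibiting the correct monotone quantity. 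The two configurations depicted in Remark~\ref{re:factroot} (Fig.~1 and Fig.~2) are precisely the pictures one tracks through this recursion.
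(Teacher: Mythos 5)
Your plan follows the paper's proof: start from Lemma~\ref{lem:LamToMu}, which already gives the two-line formula for $\mu_0:=\lambda-\epsilon_z\in\tpkl$, handle the ``moreover'' clause immediately, then iterate Lemma~\ref{lem:strai} to straighten $\mu_0$ into a $k$-bounded partition. The gap you flag at the end --- that one must keep the iterated indices in the domain of applicability of Lemma~\ref{lem:strai} and exhibit a terminating quantity --- is genuine, and it is precisely what the paper's proof supplies; until it is spelled out the argument is not complete.

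The paper sets $h\in[0,\ell-z]$ so that $\lambda_z=\lambda_{z+1}=\cdots=\lambda_{z+h}$ with $\lambda_{z+h}>\lambda_{z+h+1}$ (or $z+h=\ell$), and inducts on $d\in[0,h]$ with the following invariant on the index $\mu^{(d)}\in\tpkl$ produced at step $d$: $|\mu^{(d)}|<|\lambda|$; $\mu^{(d)}_{z+d}=\lambda_{z+d}-1$; $\mu^{(d)}_x\ge\mu^{(d)}_{x+1}$ for every $x\in[\ell-1]\setminus(z+d)$; $\mu^{(d)}_x=\lambda_x$ for $x\in[z+d+1,\ell]$; and \eqref{eq:finalform-want} holds with $\mu^{(d)}$ in place of $\mu$. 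The first four conditions say that the sole obstruction to $\mu^{(d)}$ being a partition is a rise of exactly one at column $z+d$, which via Remark~\ref{re:factroot}~(d) is exactly the hypothesis of Lemma~\ref{lem:strai} at position $z+d$. So what one must check after each application of that lemma is not, as you suggest, the wall/mirror/ceiling structure of $\De{k}{\mu^{(d+1)}}$ directly, but these inequalities on the \emph{entries}, which the paper verifies in both of your cases~(I) and~(II). The monotone quantity is not the size of $\mu^{(d)}$ (unchanged in case~(II)) nor a vague ``distance from a partition,'' but the column $z+d$ of the unique violation, which advances to the right by exactly one at each step; at $d=h$ one has $\mu^{(h)}_{z+h}=\lambda_{z+h}-1\ge\lambda_{z+h+1}=\mu^{(h)}_{z+h+1}$ because $\lambda_{z+h}>\lambda_{z+h+1}$, so $\mu^{(h)}\in\pkl$ and the iteration stops. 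You named the right lemmas, the right order, and the right obstacle, but the invariant and the monotone quantity are the substance of the proof rather than a technicality and need to be written out.
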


\begin{proof}
Denote $h:=h_{\lambda,z}\in[0,\ell-z]$ such that
\begin{equation}
\lambda_z = \lambda_{z+1} = \cdots = \lambda_{z+h}\,\text{ and }\ \lambda_{z+h}>\lambda_{z+h+1}\,\text{ if }\, z+h+1\in[\ell].
\mlabel{eq:defnh}
\end{equation}
We prove the first part of the theorem by claiming: for $d\in[0,h]$, there exists $\mu\in\tpkl$ such that
\begin{enumerate}[label=(\roman*)]
\item  $|\mu|<|\lambda|$;
\item  $\mu_{z+d}=\lambda_{z+d}-1$;
\item  $\mu_{x}\geq\mu_{x+1}$ for all $x\in[\ell-1]\setminus (z+d)$;
\item  $\mu_{x} = \lambda_{x}$ for all $x\in[z+d+1,\ell]$ and (\mref{eq:finalform-want}) is valid.
\end{enumerate}

We employ induction on $d\in[0,h]$ to verify the claim. The initial step of $d = 0$ is exactly
Lemma~\mref{lem:LamToMu} by $\mu := \lambda-\epsilon_z$ satisfying
\begin{enumerate}[label=(\roman*)]
\item  $|\mu|<|\lambda|$;
\item  $\mu_{z}=\lambda_{z}-1$;
\item  $\mu_{x}\geq\mu_{x+1}$ for all $x\in[\ell-1]\setminus z$;
\item  $\mu_{x} = \lambda_{x}$ for all $x\in[z+1,\ell]$.
\end{enumerate}

For the inductive step of $d\in[1,h]$, suppose that there exists $\mu'\in\tpkl$ satisfying the claim, that is,
\begin{enumerate}[label=(\roman*)]
\item $|\mu'|<|\lambda|$;
\item $\mu'_{z+d-1}=\lambda_{z+d-1}-1$;
\item $\mu'_{x}\geq\mu'_{x+1}$ for all $x\in[\ell-1]\setminus (z+d-1)$;
\item $\mu'_{x} = \lambda_{x}$ for all $x\in[z+d,\ell]$ and
\begin{equation}
L_{z}\fg{\lambda}{k} =
\left\{
\begin{array}{ll}
 \fg{\mu'}{k}, &\text{if $z\in[\bott+1,\ell]$},\\
 \fg{\mu'}{k}-
L_{\down{\dkl}{z}}\fg{\mu'}{k}+
L_{\down{\dkl}{z}}\fg{\lambda}{k}, &\text{if $z\in[\bott]$}.
\end{array}
\right.
\mlabel{eq:finalform-ind}
\end{equation}
\end{enumerate}
Since
$$\mu'_{z+d-1} + 1 = \lambda_{z+d-1} = \lambda_{z+d}= \mu'_{z+d},\quad \mu'_{x}\geq\mu'_{x+1}, \quad \forall x\in[\ell-1]\setminus (z+d-1),$$
it follows from Remark~\mref{re:factroot}~(d) that we have the following two cases to consider.

{\bf Case~1.} ${\rm top}_{\De{k}{\mu'}}(z+d)> {\rm top}_{\De{k}{\mu'}}(z+d-1)$. By Lemma~\ref{lem:strai}~\ref{it:strai1},
\begin{equation}
\fg{\mu'}{k}
= \fg{\mu'-\epsilon_{z+d}}{k}=:\fg{\mu}{k},
\mlabel{eq:finalform-1}
\end{equation}
where $\mu:=\mu'-\epsilon_{z+d}\in\tpkl$. Moreover,
\begin{enumerate}[label=(\roman*)]
\item $|\mu|<|\mu'|<|\lambda|$;

\item $\mu_{z+d} = \mu'_{z+d}-1 = \lambda_{z+d}-1$;

\item $\mu_{x}\geq\mu_{x+1}$ for all $x\in[\ell-1]\setminus \{z+d-1,z+d\}$, and
\begin{align*}
\mu_{z+d-1} =\mu'_{z+d-1} =&\ \lambda_{z+d-1}-1 =\lambda_{z+d}-1 = \mu'_{z+d}-1= \mu_{z+d},\,\text{ and so }\\
\mu_{x}\geq&\ \mu_{x+1},\quad \forall x\in[\ell-1]\setminus (z+d);
\end{align*}

\item $\mu_{x} = \lambda_{x}$ for all $x\in[z+d+1,\ell]$.
\end{enumerate}
Substituting~(\mref{eq:finalform-1}) into~(\mref{eq:finalform-ind}) gives the required result.

{\bf Case~2.} $y:={\rm top}_{\De{k}{\mu'}}(z+d-1)>{\rm top}_{\De{k}{\mu'}}(z+d)$. Then
\begin{equation}
\mu'_{{\rm up}_{\De{k}{\mu'}}(y+1)-1}>\mu'_{{\rm up}_{\De{k}{\mu'}}(y+1)}.
\mlabel{eq:finalform-2(1)}
\end{equation}
By Lemma~\ref{lem:strai}~\ref{it:strai2},
\begin{equation}
\fg{\mu'}{k} =\fg{\mu'+\epsilon_{\textup{up}_{\Delta^{k}(\mu')}
(y+1)}-\epsilon_{z+d}}{k}=:\fg{\mu}{k},
\mlabel{eq:finalform-2}
\end{equation}
where $\mu:=\mu'+\epsilon_{\textup{up}_{\Delta^{k}(\mu')}
(y+1)}-\epsilon_{z+d}\in\tpkl$. Further,
\begin{enumerate}[label=(\roman*)]
\item $|\mu| = |\mu'|<|\lambda|$;

\item $\mu_{z+d} = \mu'_{z+d}-1 = \lambda_{z+d}-1$;

\item $\mu_x\geq\mu_{x+1}$ for all $x\in[\ell-1]\setminus \{{\rm up}_{\De{k}{\mu'}}(y+1)-1,z+d-1,z+d\}$ and
\begin{align*}
\mu_{{\rm up}_{\De{k}{\mu'}}(y+1)-1} =&\  \mu'_{{\rm up}_{\De{k}{\mu'}}(y+1)-1}
\overset{(\ref{eq:finalform-2(1)})}{\geq} \mu'_{{\rm up}_{\De{k}{\mu'}}(y+1)}+1 = \mu_{{\rm up}_{\De{k}{\mu'}}(y+1)},\\
\mu_{z+d-1} =&\ \mu'_{z+d-1} = \lambda_{z+d-1}-1 =\lambda_{z+d}-1= \mu'_{z+d}-1 = \mu_{z+d},\,\text{ and so }\\
\mu_x\geq&\ \mu_{x+1}, \quad \forall x\in[\ell-1]\setminus (z+d);
\end{align*}

\item $\mu_{x} = \lambda_{x}$ for all $x\in[z+d+1,\ell]$.
\end{enumerate}
Plugging~(\mref{eq:finalform-2}) into~(\mref{eq:finalform-ind}) yields the needed result.

For the remainder part of the theorem, if $z = \ell$ or $\lambda_z>\lambda_{z+1}$ with $z\in[\ell-1]$,
then $h=0$ by~(\mref{eq:defnh}) and so $d=0$. Then by the above initial step, we can take $\mu := \lambda-\epsilon_z$.
This completes the proof.
\end{proof}

\begin{remark}
In terms of Theorem~\mref{thm:finalform}, for $\lambda\in\pkl$ and $z\in[\ell]$, $L_z\fg{\lambda}{k}$ can be expressed as a linear summation of some closed $k$-Schur Katalan functions $\fg{\mu}{k}$.
Indeed, the first case in~(\mref{eq:finalform-want}) is exactly as required.
For the second case in~(\mref{eq:finalform-want}),
since $z<\down{\dkl}{z}$ and the maximum vertex in the bounce path containing $z$ is certainly greater than the bottom of the root ideal, the subscript of the lowering operator must be bigger than the bottom of the root ideal after
finitely many steps by repeating Theorem~\mref{thm:finalform}.
\mlabel{re:CSKsum}
\end{remark}

We end this section with an example for better understanding.

\begin{exam}
Let $\lambda:= (5,2,2,1,1,1)\in{\rm P}^5_6$. We express
\begin{equation*}
\fg{\lambda}{k} =
\begin{tikzpicture}[scale=.5,line width=0.8pt,baseline=(a.base)]
\draw (0,2) rectangle (1,3);\node at(0.5,2.5){\scriptsize\( 5 \)};
\filldraw[red,draw=black] (1,2) rectangle (2,3);\node at(1.5,2.5){\scriptsize\( \bullet \)};
\filldraw[red,draw=black] (2,2) rectangle (3,3);\node at(2.5,2.5){\scriptsize\( \bullet \)};
\filldraw[red,draw=black] (3,2) rectangle (4,3);\node at(3.5,2.5){\scriptsize\( \bullet \)};
\filldraw[red,draw=black] (4,2) rectangle (5,3);\node at(4.5,2.5){\scriptsize\( \bullet \)};
\filldraw[red,draw=black] (5,2) rectangle (6,3);\node at(5.5,2.5){\scriptsize\( \bullet \)};
\draw (0,1) rectangle (1,2);
\draw (1,1) rectangle (2,2);
\draw (2,1) rectangle (3,2);
\draw (3,1) rectangle (4,2);
\draw (4,1) rectangle (5,2);
\filldraw[red,draw=black] (5,1) rectangle (6,2);\node at(5.5,1.5){\scriptsize\( \bullet \)};
\draw (0,0) rectangle (1,1);
\draw (1,0) rectangle (2,1);
\draw (2,0) rectangle (3,1);\node at(2.5,0.5){\scriptsize\( 3 \)};
\draw (3,0) rectangle (4,1);
\draw (4,0) rectangle (5,1);
\draw (5,0) rectangle (6,1);
\draw (0,-1) rectangle (1,0);
\draw (1,-1) rectangle (2,0);
\draw (2,-1) rectangle (3,0);
\draw (3,-1) rectangle (4,0);\node at(3.5,-0.5){\scriptsize\( 3 \)};
\draw (4,-1) rectangle (5,0);
\draw (5,-1) rectangle (6,0);
\draw (0,-2) rectangle (1,-1);
\draw (1,-2) rectangle (2,-1);
\draw (2,-2) rectangle (3,-1);
\draw (3,-2) rectangle (4,-1);
\draw (4,-2) rectangle (5,-1);\node at(4.5,-1.5){\scriptsize\( 2 \)};
\draw (5,-2) rectangle (6,-1);
\draw (0,-3) rectangle (1,-2);
\draw (1,-3) rectangle (2,-2);
\draw (2,-3) rectangle (3,-2);
\draw (3,-3) rectangle (4,-2);
\draw (4,-3) rectangle (5,-2);
\draw (5,-3) rectangle (6,-2);
%
%
\draw[green,line width=0.8pt] (1.5,1.5)--(1.5,2);
\draw[green,line width=0.8pt] (1.5,1.5)--(5,1.5);
\draw[green,line width=0.8pt] (5.5,1)--(5.5,-2.5);
\draw[green,line width=0.8pt] (5.5,-2.5)--(6,-2.5);
\node at(5.5,-2.5){\scriptsize\( 1 \)};
\node at(1.5,1.5){\scriptsize\( 4 \)};
\end{tikzpicture}\, ,
\end{equation*}
where the green line represents the bounce path $(1,2,6)$ and row $2$ is the bottom of the root ideal $\dkl$. Then
\begin{equation}
\begin{aligned}
L_2 \fg{\lambda}{k} =&\ L_2 \fg{(5,2,2,1,1,1)}{k}\\
=&\ \fg{(5,1,2,1,1,1)}{k}-L_6\fg{(5,1,2,1,1,1)}{k}+ L_6 \fg{(5,2,2,1,1,1)}{k}\hspace{1cm} (\text{by Lemma~\ref{lem:LamToMu}~\ref{it:LamToMu2}})\\
=&\ \fg{(5,1,1,1,1,1)}{k} - L_6 \fg{(5,1,1,1,1,1)}{k}+ L_6 \fg{(5,2,2,1,1,1)}{k}\\
& \hspace{1.8cm} (\text{by Lemma~\ref{lem:strai}~\ref{it:strai1} for the first and second summands}).
\end{aligned}
\mlabel{eq:ReL3}
\end{equation}
%
%
Further by Theorem~\mref{thm:finalform},
\begin{equation}
L_6 \fg{(5,1,1,1,1,1)}{k} = \fg{(5,1,1,1,1,0)}{k},\quad
L_6 \fg{(5,2,2,1,1,1)}{k} = \fg{(5,2,2,1,1,0)}{k}.
\mlabel{eq:ReL6}
\end{equation}
Substituting~(\mref{eq:ReL6}) into~(\mref{eq:ReL3}) yields
\begin{equation}
L_2 \fg{\lambda}{k} = \fg{(5,1,1,1,1,1)}{k} - \fg{(5,1,1,1,1,0)}{k}+ \fg{(5,2,2,1,1,0)}{k}.
\mlabel{eq:Refinal}
\end{equation}
Hence we conclude that $L_2 \fg{\lambda}{k}$ is a linear summation of three closed $k$-Schur Katalan functions.
\mlabel{ex:Lg}
\end{exam}

\section{Proofs of alternating dual Pieri rule conjecture and $k$-branching conjecture} \mlabel{sec:comproof}
Our goal in this section is to exploit on the proofs of Theorems~\mref{thm:aim1},~\mref{thm:aim2} and~\mref{thm:aim3}.

\subsection{Proof of Theorem~\mref{thm:aim1}} Let us first recall the following result.
\begin{lemma}(\cite[p.~8]{BMS})
Let $\Psi\subseteq\Delta_{\ell}^{+}$ be a root ideal, $M$ a multiset with \textup{supp}$(M)\subseteq [\ell]$, $\gamma\in\mathbb{Z}^{\ell}$ and $d\geq0$. Then
\begin{equation*}		
e_{d}^{\perp}K(\Psi;M;\gamma)=
\sum_{S\subseteq[\ell],|S|=d}K(\Psi;M;\gamma-\epsilon_{S}),
\end{equation*}
where $\epsilon_{S}:=\sum_{i\in S}\epsilon_{i}$.
In particular, $e_{d}^{\perp}K(\Psi;M;\gamma)=0$ for $d>\ell$.
\mlabel{lem:eKatalan}
\end{lemma}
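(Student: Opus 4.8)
The plan is to establish the identity first for the dual stable Grothendieck polynomial $g_\gamma=K(\varnothing;\varnothing;\gamma)$ and then propagate it through the raising/lowering operator formalism. I would start from the Hopf‑algebraic skewing rule for symmetric functions: since $\Delta(e_d)=\sum_{a+b=d}e_a\otimes e_b$, for any symmetric functions $f_1,\dots,f_n$,
\[
e_d^{\perp}(f_1\cdots f_n)=\sum_{a_1+\cdots+a_n=d}e_{a_1}^{\perp}(f_1)\cdots e_{a_n}^{\perp}(f_n).
\]
Then I would compute $e_a^{\perp}$ on a single building block: by the dual Pieri (vertical‑strip) rule applied to a one‑row shape, $e_0^{\perp}h_m=h_m$, $e_1^{\perp}h_m=h_{m-1}$, and $e_a^{\perp}h_m=0$ for $a\ge 2$; since $k_m^{(r)}=\sum_{i=0}^m\binom{r+i-1}{i}h_{m-i}$ is a $\ZZ$‑linear combination of complete homogeneous symmetric functions, linearity gives
\[
e_0^{\perp}k_m^{(r)}=k_m^{(r)},\qquad e_1^{\perp}k_m^{(r)}=k_{m-1}^{(r)},\qquad e_a^{\perp}k_m^{(r)}=0\ \ (a\ge 2),
\]
under the standing convention $k_m^{(r)}=0$ for $m<0$.

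Feeding this into the expansion $g_\gamma=\sum_{\sigma\in S_\ell}\operatorname{sgn}(\sigma)\prod_{i=1}^{\ell}k_{\gamma_i+\sigma(i)-i}^{(i-1)}$, the only tuples $(a_1,\dots,a_\ell)$ surviving in the skewing rule are those with every $a_i\in\{0,1\}$, i.e.\ $(a_1,\dots,a_\ell)=\epsilon_S$ for some $S\subseteq[\ell]$ with $|S|=d$; for such a tuple the product of factors equals $\prod_{i=1}^{\ell}k_{(\gamma-\epsilon_S)_i+\sigma(i)-i}^{(i-1)}$, and re‑summing over $\sigma$ with signs gives
\[
e_d^{\perp}g_\gamma=\sum_{S\subseteq[\ell],\,|S|=d}g_{\gamma-\epsilon_S},\qquad\text{valid for all }\gamma\in\ZZ^{\ell}.
\]
The crucial feature is that this formula is equivariant under translating the index vector, so $e_d^{\perp}$ commutes with every index shift $\gamma\mapsto\gamma+\delta$, in particular with $L_z$ and $R_{ij}$, hence with $\prod_{z\in M}(1-L_z)\prod_{(i,j)\in\Psi}(1-R_{ij})^{-1}$ (each $(1-R_{ij})^{-1}$ acting on a $g$ as the sum $\sum_{n\ge 0}R_{ij}^{n}$, which is finite since $g_{\gamma'}=0$ once a coordinate of $\gamma'$ is sufficiently negative). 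Applying this operator to both sides of the last display yields $e_d^{\perp}K(\Psi;M;\gamma)=\sum_{S\subseteq[\ell],|S|=d}K(\Psi;M;\gamma-\epsilon_S)$. Finally, if $d>\ell$ there is no $S\subseteq[\ell]$ with $|S|=d$, so the right‑hand side is an empty sum and $e_d^{\perp}K(\Psi;M;\gamma)=0$.

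The two preliminary computations are routine; the only step deserving care is the propagation, namely the assertion that $K(\Psi;M;\gamma)$ is obtained from $g_\gamma$ by a $\gamma$‑translation‑equivariant (and, on each $g_\gamma$, finite) combination of index shifts, so that $e_d^{\perp}$ may be pulled through it. This is exactly the formal framework of~\cite{BMS} in which $L_z$ and $R_{ij}$ live, so there is no real obstruction; one just argues there rather than treating $R_{ij}$ as an endomorphism of $\Lambda$, which it is not. Alternatively one can bypass $g_\gamma$ entirely and work from $K(\Psi;M;\gamma)=\prod_{z\in M}(1-L_z)\prod_{(i,j)\in\Delta_\ell^+\setminus\Psi}(1-R_{ij})k_\gamma$ of Remark~\ref{re:BMS}(c), whose right‑hand side is a genuinely finite product of finite operators applied to $k_\gamma$, together with the analogous (and even shorter) identity $e_d^{\perp}k_\gamma=\sum_{S\subseteq[\ell],|S|=d}k_{\gamma-\epsilon_S}$.
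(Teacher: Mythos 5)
The paper does not reprove this lemma; it is taken verbatim from \cite[p.~8]{BMS}, so there is no in-paper argument to measure yours against. Your derivation is correct: the Hopf-algebraic skewing rule $e_d^\perp(f_1\cdots f_n)=\sum_{a_1+\cdots+a_n=d}\prod_i e_{a_i}^\perp(f_i)$ together with $e_a^\perp k_m^{(r)}=k_{m-a}^{(r)}$ for $a\in\{0,1\}$ and $=0$ for $a\ge 2$ gives $e_d^\perp k_\gamma=\sum_{|S|=d}k_{\gamma-\epsilon_S}$, and this pushes through any finite, $\gamma$-shift-invariant combination of $k_\mu$'s. Of your two routes, the second one is the one to keep: expanding $K(\Psi;M;\gamma)=\prod_{z\in M}(1-L_z)\prod_{(i,j)\in\Delta_\ell^+\setminus\Psi}(1-R_{ij})k_\gamma$ via Remark~\ref{re:BMS}~(c) as a genuinely finite sum $\sum_\delta c_\delta k_{\gamma+\delta}$ with coefficients $c_\delta$ independent of $\gamma$ makes the passage of $e_d^\perp$ through the shift operators a triviality, whereas your first route via $g_\gamma$ relies on the finiteness of $\prod_{(i,j)\in\Psi}(1-R_{ij})^{-1}$ acting on $g_\gamma$, which is true (and established in \cite{BMS}) but not as immediate as the single-variable vanishing you invoke suggests, since one must control all the $n_{ij}$ simultaneously.
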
\vskip 0.1in

We arrive at a position to prove Theorem~\mref{thm:aim1}.

\begin{proof}[Proof of Theorem~\ref{thm:aim1}]
First,
\begin{equation}
\begin{aligned}
e_d^\perp g_\lambda =&\ e_d^\perp K(\varnothing;\varnothing;\lambda)\hspace{1cm}(\text{by Remark~\ref{re:BMS}~(b)})\\
=&\ \sum_{S\subseteq[\ell],|S|=d}K(\varnothing;\varnothing;\lambda-\epsilon_{S})
\hspace{1cm}(\text{by Lemma~\ref{lem:eKatalan}})\\
=&\ \sum_{S\subseteq[\ell],|S|=d}g_{\lambda-\epsilon_{S}}\hspace{1cm}(\text{by Remark~\ref{re:BMS}~(b)}).
\end{aligned}
\mlabel{eq:aim1-1}
\end{equation}
In particular,
\begin{equation}
e_{d}^{\perp}g_\lambda =e_{d}^{\perp}K(\varnothing;\varnothing;\lambda) =0\, \text{ for }\,d>\ell.
\mlabel{eq:aim1-2}
\end{equation}
Then
\begin{align*}
G_{1^\ell}^{\perp}g_{\lambda}=&\ \Bigg(\sum_{i\geq 0}(-1)^i\binom{\ell-1+i}{\ell-1}e_{\ell+i}\Bigg)^\perp g_{\lambda} \hspace{1cm}(\text{by~(\ref{eq:G})})\\
=&\ \Bigg(\sum_{i\geq 0}(-1)^i\binom{\ell-1+i}{\ell-1}e_{\ell+i}^\perp\Bigg)g_{\lambda}\hspace{1cm}
(\text{by $\perp$ being linear})\\
=&\ e_{\ell}^\perp g_{\lambda}\hspace{1cm}(\text{$e_{\ell+i}^{\perp}g_\lambda=0$ for $i>0$ by (\ref{eq:aim1-2})})\\
=&\ g_{\lambda-1^\ell}\hspace{1cm} (\text{by~(\ref{eq:aim1-1})}).
\end{align*}
This completes the proof.
\end{proof}

\subsection{Proof of Theorem~\mref{thm:aim2}}
Let $\lambda\in\pkl$ and $z\in[\ell]$. For simplicity, abbreviate ${\rm d}_{\lambda}^{a}(z):= \Down{\dkl}{a}{z}$ if $\Down{\dkl}{a}{z}$ is defined.
We give two results as preparation.

\begin{lemma}
Let $\lambda\in\pkl$ and $\bott:=\bott_\lambda$ be the bottom of $\dkl$. Take $z\in[\bott]$ such that $\lambda_z>\lambda_{z+1}$ and ${\rm d}_{\lambda}^{1}(z)<\ell$. Denote $c:=|{\rm path}_{\dkl}(z,{\rm bot}_{\dkl}(z))|$.
For each $a\in[c-1]$, if $\lambda_{{\rm d}_{\lambda}^{a}(z)} = \lambda_{{\rm d}_{\lambda}^{a}(z)+1}$, then
\begin{equation}
L_{{\rm d}_{\lambda}^{a}(z)}\fg{\lambda}{k} - L_{{\rm d}_{\lambda}^{a}(z)}\fg{\lambda-\epsilon_z}{k}=
\left\{
\begin{array}{ll}
 0, & \quad \text{if $a=c-1$};\\
 L_{{\rm d}_{\lambda}^{a+1}(z)}\fg{\lambda}{k} - L_{{\rm d}_{\lambda}^{a+1}(z)}\fg{\lambda-\epsilon_z}{k}, & \quad \text{if $a\in[c-2]$}.
\end{array}
\right.
\mlabel{eq:indkey}
\end{equation}
\mlabel{lem:indkey}
\end{lemma}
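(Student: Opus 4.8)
The plan is to rewrite the identity through the partition $\mu:=\lambda-\epsilon_z$. Since $\lambda_z>\lambda_{z+1}$ one has $\mu\in\pkl$, and since $z\in[\bott]$ with $\lambda_z\ge\lambda_{z+1}$ the root $\beta:=(z,w_1)$, $w_1:={\rm d}_{\lambda}^{1}(z)$, is removable in $\dkl$ (Remark~\ref{re:factroot}~(c)); hence $\dkmu=\dkl\setminus\beta$ and $L(\dkmu)$ is $L(\dkl)$ with one copy of $w_1$ deleted. Writing $w_a:={\rm d}_{\lambda}^{a}(z)$, Remark~\ref{re:BMS}~(a) gives $L_{w_a}\fg{\lambda}{k}=K(\dkl;\dkl;\lambda-\epsilon_{w_a})$ and $L_{w_a}\fg{\mu}{k}=K(\dkmu;\dkmu;\mu-\epsilon_{w_a})$. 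I would first feed the difference $L_{w_a}\fg{\lambda}{k}-L_{w_a}\fg{\mu}{k}$ through the relations of Lemma~\ref{lem:relk}: split off $\beta$ with parts~(a)/(b), then rebalance the multiplicities of $w_1$ and of $z$ in the multiset with parts~(c)/(d). After a sequence of cancellations and recombinations all the terms supported on $\dkmu$ disappear, leaving a single Katalan function
\begin{equation*}
L_{w_a}\bigl(\fg{\lambda}{k}-\fg{\lambda-\epsilon_z}{k}\bigr)=K\bigl(\dkl;\,M^{\ast};\,\lambda-\epsilon_{w_a}\bigr),\qquad M^{\ast}:=L(\dkl)\sqcup z\setminus w_1,
\end{equation*}
valid for every $a\in[c-1]$; here $M^{\ast}$ is $L(\dkl)$ with one occurrence of $w_1$ replaced by an occurrence of $z$, and this replacement is exactly what will force the Mirror Lemma onto its vanishing branch below.

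I would then record the combinatorial data supplied by the hypotheses: by Remark~\ref{re:factroot}~(b)--(c), $\lambda_z>\lambda_{z+1}$ together with ${\rm d}_{\lambda}^{1}(z)<\ell$ gives $\dkl$ a ceiling in columns $w_1,w_1+1$; each equality $\lambda_x=\lambda_{x+1}$ with $\lambda_x<k$ and $x<\bott$ gives a mirror in rows $x,x+1$; and the standing hypothesis $\lambda_{w_a}=\lambda_{w_a+1}$ makes $\gamma:=\lambda-\epsilon_{w_a}$ satisfy $\gamma_{w_a}+1=\gamma_{w_a+1}$. For $a=c-1$, the vertex $w_{c-1}={\rm bot}_{\dkl}(z)$ has no outgoing bounce edge, and since $\lambda_{w_{c-1}}=\lambda_{w_{c-1}+1}$ with row $w_{c-1}$ nonempty would force $\down{\dkl}{w_{c-1}}$ to exist (Remark~\ref{re:factroot}~(c)), rows $w_{c-1}$ and $w_{c-1}+1$ are empty in $\dkl$, hence form a wall. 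For $a\in[c-2]$, $w_a\le\bott$ (otherwise $\down{\dkl}{w_a}$ is undefined, contradicting $a<c-1$), the root $(w_a,w_{a+1})$ with $w_{a+1}:=\down{\dkl}{w_a}$ is removable in $\dkl$, and since $\lambda_{w_a}=\lambda_{w_a+1}$ the leftmost box of row $w_a$ lies in column $w_{a+1}$, so deleting $(w_a,w_{a+1})$ makes rows $w_a,w_a+1$ equally long, i.e. a wall. In that case Lemma~\ref{lem:relk}~(a), together with $\lambda-\epsilon_{w_a}+\varepsilon_{(w_a,w_{a+1})}=\lambda-\epsilon_{w_{a+1}}$, gives
\begin{equation*}
K(\dkl;M^{\ast};\lambda-\epsilon_{w_a})=K\bigl(\dkl\setminus(w_a,w_{a+1});\,M^{\ast};\,\lambda-\epsilon_{w_a}\bigr)+K(\dkl;M^{\ast};\lambda-\epsilon_{w_{a+1}}),
\end{equation*}
and by the first step the last summand is exactly $L_{w_{a+1}}\bigl(\fg{\lambda}{k}-\fg{\lambda-\epsilon_z}{k}\bigr)$. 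So in both cases the whole statement reduces to the vanishing of a single Katalan function: $K(\dkl;M^{\ast};\lambda-\epsilon_{w_{c-1}})$ when $a=c-1$, and $K(\dkl\setminus(w_a,w_{a+1});M^{\ast};\lambda-\epsilon_{w_a})$ when $a\in[c-2]$.

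I expect the main obstacle to be precisely these two vanishings. The natural mechanism is the Mirror Lemma (Lemma~\ref{lem:mirr1}), or --- if a single application is not enough --- an inner induction that peels removable roots up the bounce path of $z$ one at a time, in the style of the proof of Lemma~\ref{lem:mirr2}, until a configuration is reached that is killed by Lemma~\ref{lem:mirr1} (or by Lemma~\ref{lem:K+K=0}). The delicate points are: (i) choosing the auxiliary index $y$, which requires a case distinction according to how far up the bounce path of $z$ the needed chain of mirrors, the ceiling, and the constancy of $\gamma=\lambda-\epsilon_{w_a}$ actually reach, and in particular according to whether $\uup{\dkl}{z}$ is defined; and (ii) checking the multiplicity chain $m_{M^{\ast}}(x)+1=m_{M^{\ast}}(x+1)$ along ${\rm path}_{\dkl}(\down{\dkl}{y},w_a)$, where the ceiling in columns $w_1,w_1+1$ of $\dkl$ becomes, after the passage from $L(\dkl)$ to $M^{\ast}$, exactly the equality $m_{M^{\ast}}(w_1)+1=m_{M^{\ast}}(w_1+1)$ that selects the ``$K=0$'' alternative of the Mirror Lemma. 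By contrast, the reduction of the first step is a mechanical computation with Lemma~\ref{lem:relk}, and the bookkeeping of which rows lie above or below $\bott$ is routine via Remark~\ref{re:factroot}.
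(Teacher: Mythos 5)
Your first step is correct, and it is a genuinely different opening than the paper's. Writing $w_a := {\rm d}_{\lambda}^{a}(z)$ and using Lemma~\ref{lem:relk}~\ref{it:relk2},~\ref{it:relk3},~\ref{it:relk4} against $\dkmu=\dkl\setminus(z,w_1)$, one does get
$L_{w_a}\bigl(\fg{\lambda}{k}-\fg{\lambda-\epsilon_z}{k}\bigr)=K\bigl(\dkl;\,M^{\ast};\,\lambda-\epsilon_{w_a}\bigr)$
with $M^{\ast}=L(\dkl)\sqcup z\setminus w_1$, and your Lemma~\ref{lem:relk}~\ref{it:relk1} split for $a\in[c-2]$ is also right (using $\dkl\setminus(w_a,w_{a+1})=\De{k}{\lambda-\epsilon_{w_a}}$). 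The paper never drops to bare Katalan functions: it applies Lemma~\ref{lem:LamToMu} to $L_{w_a}\fg{\lambda}{k}$ and to $L_{w_a}\fg{\lambda-\epsilon_z}{k}$ separately, straightens the resulting $\fg{}{}$'s via Lemma~\ref{lem:strai}~\ref{it:strai1} and~\ref{it:strai2}, and observes that both straightened leading terms are $\fg{\lambda-\epsilon_{w_a}-\epsilon_{w_a+1}}{k}$ and cancel. Your reformulation compresses that cancellation into a single Katalan-function identity.

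However, the two vanishings you defer are the whole content of the lemma, not a tidy endgame, and the sketch you give does not yet close them. Three concrete obstacles: (i) the mirror/ceiling structure that Lemma~\ref{lem:mirr1} requires lives in $\De{k}{\lambda-\epsilon_{w_a}}$, and by Remark~\ref{re:factroot}~(d) it comes in the two cases of~\eqref{eq:noteq}, with different auxiliary indices $y$; in case~(II) a single application of Lemma~\ref{lem:mirr1} does not give a vanishing at all --- the paper's proof of Lemma~\ref{lem:strai}~\ref{it:strai2} has to use Lemma~\ref{lem:mirr2} and a nontrivial $\Psi\cup\alpha$, $M\sqcup y$ peeling, which you would have to reproduce. (ii) The multiplicity chain $m_{M^{\ast}}(x)+1=m_{M^{\ast}}(x+1)$ has to hold along a bounce path of $\De{k}{\lambda-\epsilon_{w_a}}$, but $M^{\ast}$ differs from $L(\De{k}{\lambda-\epsilon_{w_a}})$ in three places (an added $z$, a deleted $w_1$, and an extra $w_{a+1}$), so the ceiling at columns $w_1,w_1+1$ only gives you the single equality $m_{M^{\ast}}(w_1)+1=m_{M^{\ast}}(w_1+1)$, not the whole chain. (iii) None of this is secured by the standing hypothesis, which only controls $\lambda_{w_a}$ versus $\lambda_{w_a+1}$ for the one value of $a$ under consideration. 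Filling all of this in would, in effect, reprove Lemma~\ref{lem:strai} in the two configurations $\lambda-\epsilon_{w_a}$ and $\lambda-\epsilon_z-\epsilon_{w_a}$ at once; the paper's route through Lemmas~\ref{lem:LamToMu} and~\ref{lem:strai} is precisely a way of reusing that already-built machinery instead of redoing it.
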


\begin{proof}
Since $\lambda_z>\lambda_{z+1}$ and ${\rm d}_{\lambda}^{1}(z)<\ell$, it follows from Remark~\mref{re:factroot}~(b) that there is a ceiling in columns ${\rm d}_{\lambda}^{1}(z),{\rm d}_{\lambda}^{1}(z)+1$ in the root ideal $\dkl$. Then
\begin{equation}
{\rm top}_{\De{k}{\lambda-\epsilon_{{\rm d}_{\lambda}^{a}(z)}}}({\rm d}_{\lambda}^{a}(z)+1)-1 > {\rm top}_{\De{k}{\lambda-\epsilon_{{\rm d}_{\lambda}^{a}(z)}}}({\rm d}_{\lambda}^{a}(z)).
\mlabel{eq:T1}
\end{equation}
Further, the hypothesises $\lambda_z>\lambda_{z+1}$ and ${\rm d}_{\lambda}^{1}(z)<\ell$ show that $(z,{\rm d}_{\lambda}^{1}(z)+1)$ is removable in $\De{k}{\lambda-\epsilon_z}$.
Hence
\begin{equation}
y:={\rm top}_{\De{k}{\lambda-\epsilon_z-\epsilon_{{\rm d}_{\lambda}^{a}(z)}}}({\rm d}_{\lambda}^{a}(z))>{\rm top}_{\De{k}{\lambda-\epsilon_z-\epsilon_{{\rm d}_{\lambda}^{a}(z)}}}({\rm d}_{\lambda}^{a}(z)+1)\,\text{ and }\,\uup{\De{k}{\lambda-\epsilon_z-\epsilon_{{\rm d}_{\lambda}^{a}(z)}}}{y+1} =z.
\mlabel{eq:T2}
\end{equation}
By $\lambda\in\pkl$ and $\lambda_{{\rm d}_{\lambda}^{a}(z)} = \lambda_{{\rm d}_{\lambda}^{a}(z)+1}$,
\begin{equation}
\begin{aligned}
(\lambda-\epsilon_{{\rm d}_{\lambda}^{a}(z)})_{{\rm d}_{\lambda}^{a}(z)} =&\ \lambda_{{\rm d}_{\lambda}^{a}(z)}-1 =\lambda_{{\rm d}_{\lambda}^{a}(z)+1}-1 = (\lambda-\epsilon_{{\rm d}_{\lambda}^{a}(z)})_{{\rm d}_{\lambda}^{a}(z)+1}-1,\\
(\lambda-\epsilon_{{\rm d}_{\lambda}^{a}(z)})_x\geq&\ (\lambda-\epsilon_{{\rm d}_{\lambda}^{a}(z)})_{x+1}, \quad \forall x\in[\ell-1]\setminus {\rm d}_{\lambda}^{a}(z).
\end{aligned}
\mlabel{eq:C1}
\end{equation}
Notice that
\begin{align*}
\lambda-\epsilon_z\in\pkl \,\text{ by }\, \lambda_z>\lambda_{z+1},\quad
(\lambda-\epsilon_z)_{{\rm d}_{\lambda}^{a}(z)}=\lambda_{{\rm d}_{\lambda}^{a}(z)} = \lambda_{{\rm d}_{\lambda}^{a}(z)+1} = (\lambda-\epsilon_z)_{{\rm d}_{\lambda}^{a}(z)+1}.
\end{align*}
Then
\begin{equation}
\begin{aligned}
 (\lambda-\epsilon_z-\epsilon_{{\rm d}_{\lambda}^{a}(z)})_{{\rm d}_{\lambda}^{a}(z)} =&\ \lambda_{{\rm d}_{\lambda}^{a}(z)}-1 = \lambda_{{\rm d}_{\lambda}^{a}(z)+1}-1=(\lambda-\epsilon_z-\epsilon_{{\rm d}_{\lambda}^{a}(z)})_{{\rm d}_{\lambda}^{a}(z)+1}-1,\\
(\lambda-\epsilon_z-\epsilon_{{\rm d}_{\lambda}^{a}(z)})_x\geq&\ (\lambda-\epsilon_z-\epsilon_{{\rm d}_{\lambda}^{a}(z)})_{x+1},\quad \forall x\in[\ell-1]\setminus {\rm d}_{\lambda}^{a}(z).
\end{aligned}
\mlabel{eq:C2}
\end{equation}
According to $a\in [c-1]$, we can divide the remaining proof into the following two cases.

{\bf Case~1.} $a=c-1$. Since $c:=|{\rm path}_{\dkl}(z,{\rm bot}_{\dkl}(z))|$,
$${\rm d}_{\lambda}^{a}(z) = {\rm bot}_{\dkl}(z) \in[\bott+1,\ell-1].$$
By Lemma~\mref{lem:LamToMu}~\mref{it:LamToMu1},
\begin{equation}
L_{{\rm d}_{\lambda}^{a}(z)}\fg{\lambda}{k} = \fg{\lambda-\epsilon_{{\rm d}_{\lambda}^{a}(z)}}{k},\quad  L_{{\rm d}_{\lambda}^{a}(z)}\fg{\lambda-\epsilon_z}{k} = \fg{\lambda-\epsilon_z-\epsilon_{{\rm d}_{\lambda}^{a}(z)}}{k}.
\mlabel{eq:lem420}
\end{equation}
From~(\ref{eq:T1}),~(\ref{eq:C1}) and Lemma~\ref{lem:strai}~\ref{it:strai1},
\begin{equation}
\fg{\lambda-\epsilon_{{\rm d}_{\lambda}^{a}(z)}}{k} = \fg{\lambda-\epsilon_{{\rm d}_{\lambda}^{a}(z)}-\epsilon_{{\rm d}_{\lambda}^{a}(z)+1}}{k}.
\mlabel{eq:lem421}
\end{equation}
In terms of (\ref{eq:T2}),~(\ref{eq:C2}) and Lemma~\ref{lem:strai}~\ref{it:strai2},
\begin{equation}
\fg{\lambda-\epsilon_z-\epsilon_{{\rm d}_{\lambda}^{a}(z)}}{k} = \fg{(\lambda-\epsilon_z-\epsilon_{{\rm d}_{\lambda}^{a}(z)})+\epsilon_{z}-\epsilon_{{\rm d}_{\lambda}^{a}(z)+1}}{k} \\ =\fg{\lambda-\epsilon_{{\rm d}_{\lambda}^{a}(z)}-\epsilon_{{\rm d}_{\lambda}^{a}(z)+1}}{k}.
\mlabel{eq:lem422}
\end{equation}
Substituting~(\mref{eq:lem421}) and~(\mref{eq:lem422}) into~(\mref{eq:lem420}) and subtracting the two equations in~(\mref{eq:lem420}), we conclude
\begin{equation*}
L_{{\rm d}_{\lambda}^{a}(z)}\fg{\lambda}{k} - L_{{\rm d}_{\lambda}^{a}(z)}\fg{\lambda-\epsilon_z}{k}= \fg{\lambda-\epsilon_{{\rm d}_{\lambda}^{a}(z)}}{k}- \fg{\lambda-\epsilon_z-\epsilon_{{\rm d}_{\lambda}^{a}(z)}}{k} = \fg{\lambda-\epsilon_{{\rm d}_{\lambda}^{a}(z)}-\epsilon_{{\rm d}_{\lambda}^{a}(z)+1}}{k}-\fg{\lambda-\epsilon_{{\rm d}_{\lambda}^{a}(z)}-\epsilon_{{\rm d}_{\lambda}^{a}(z)+1}}{k}=0.
\end{equation*}

{\bf Case~2.}  $a\in[c-2]$. Then ${\rm d}_{\lambda}^{a}(z)\in[\bott]$ and
\begin{equation}
\begin{aligned}
L_{{\rm d}_{\lambda}^{a}(z)}\fg{\lambda}{k} =&\ \fg{\lambda-\epsilon_{{\rm d}_{\lambda}^{a}(z)}}{k}-L_{\down{\dkl}{{\rm d}_{\lambda}^{a}(z)}}\fg{\lambda-\epsilon_{{\rm d}_{\lambda}^{a}(z)}}{k}+ L_{\down{\dkl}{{\rm d}_{\lambda}^{a}(z)}}\fg{\lambda}{k}\hspace{1cm}(\text{by Lemma~\ref{lem:LamToMu}~\ref{it:LamToMu2}})\\
=&\ \fg{\lambda-\epsilon_{{\rm d}_{\lambda}^{a}(z)}}{k}-L_{{\rm d}_{\lambda}^{a+1}(z)}\fg{\lambda-\epsilon_{{\rm d}_{\lambda}^{a}(z)}}{k}+ L_{{\rm d}_{\lambda}^{a+1}(z)}\fg{\lambda}{k}\hspace{1cm}(\text{by $\down{\dkl}{{\rm d}_{\lambda}^{a}(z)} = {\rm d}_{\lambda}^{a+1}(z)$})\\
=&\ \fg{\lambda-\epsilon_{{\rm d}_{\lambda}^{a}(z)}-\epsilon_{{\rm d}_{\lambda}^{a}(z)+1}}{k}-L_{{\rm d}_{\lambda}^{a+1}(z)}\fg{\lambda-\epsilon_{{\rm d}_{\lambda}^{a}(z)}-\epsilon_{{\rm d}_{\lambda}^{a}(z)+1}}{k}+ L_{{\rm d}_{\lambda}^{a+1}(z)}\fg{\lambda}{k}\\
& \hspace{3cm}(\text{$\fg{\lambda-\epsilon_{{\rm d}_{\lambda}^{a}(z)}}{k} = \fg{\lambda-\epsilon_{{\rm d}_{\lambda}^{a}(z)}-\epsilon_{{\rm d}_{\lambda}^{a}(z)+1}}{k}$ by~(\ref{eq:T1}),~(\ref{eq:C1}) and Lemma~\ref{lem:strai}~\ref{it:strai1}}).
\end{aligned}
\mlabel{eq:indkey1}
\end{equation}
Since $(z,{\rm d}_{\lambda}^{1}(z))$ is removable in $\De{k}{\lambda}$, we have that $(z,{\rm d}_{\lambda}^{1}(z)+1)$ is removable in $\De{k}{\lambda-\epsilon_z}$, and so ${\rm d}_{\lambda}^{a}(z)\in[b']$, where $b'$ is the bottom of the $\De{k}{\lambda-\epsilon_z}$. Thus
\begin{align}
L_{{\rm d}_{\lambda}^{a}(z)}\fg{\lambda-\epsilon_z}{k} =&\ \fg{\lambda-\epsilon_z-\epsilon_{{\rm d}_{\lambda}^{a}(z)}}{k}-L_{\down{\De{k}{\lambda-\epsilon_z}}{{\rm d}_{\lambda}^{a}(z)}}\fg{\lambda-\epsilon_z-\epsilon_{{\rm d}_{\lambda}^{a}(z)}}{k}+ L_{\down{\De{k}{\lambda-\epsilon_z}}{{\rm d}_{\lambda}^{a}(z)}}\fg{\lambda-\epsilon_z}{k}\hspace{1cm}(\text{by Lemma~\ref{lem:LamToMu}~\ref{it:LamToMu2}})\notag\\
=&\ \fg{\lambda-\epsilon_z-\epsilon_{{\rm d}_{\lambda}^{a}(z)}}{k}-L_{{\rm d}_{\lambda}^{a+1}(z)}\fg{\lambda-\epsilon_z-\epsilon_{{\rm d}_{\lambda}^{a}(z)}}{k}+ L_{{\rm d}_{\lambda}^{a+1}(z)}\fg{\lambda-\epsilon_z}{k}\hspace{1cm}(\text{by $\down{\De{k}{\lambda-\epsilon_z}}{{\rm d}_{\lambda}^{a}(z)} = {\rm d}_{\lambda}^{a+1}(z)$})\notag\\
=&\ \fg{(\lambda-\epsilon_z-\epsilon_{{\rm d}_{\lambda}^{a}(z)})+\epsilon_z-\epsilon_{{\rm d}_{\lambda}^{a}(z)+1}}{k}-L_{{\rm d}_{\lambda}^{a+1}(z)}\fg{(\lambda-\epsilon_z-\epsilon_{{\rm d}_{\lambda}^{a}(z)})+\epsilon_z-\epsilon_{{\rm d}_{\lambda}^{a}(z)+1}}{k}+ L_{{\rm d}_{\lambda}^{a+1}(z)}\fg{\lambda-\epsilon_z}{k}\notag\\
& \hspace{3cm}(\text{$\fg{\lambda-\epsilon_z-\epsilon_{{\rm d}_{\lambda}^{a}(z)}}{k}=\fg{(\lambda-\epsilon_z-\epsilon_{{\rm d}_{\lambda}^{a}(z)})+\epsilon_z-\epsilon_{{\rm d}_{\lambda}^{a}(z)+1}}{k}$ by~(\ref{eq:T2}),~(\ref{eq:C2}) and Lemma~\ref{lem:strai}~\ref{it:strai2}})\notag\\
=&\ \fg{\lambda-\epsilon_{{\rm d}_{\lambda}^{a}(z)}-\epsilon_{{\rm d}_{\lambda}^{a}(z)+1}}{k}-L_{{\rm d}_{\lambda}^{a+1}(z)}\fg{\lambda-\epsilon_{{\rm d}_{\lambda}^{a}(z)}-\epsilon_{{\rm d}_{\lambda}^{a}(z)+1}}{k}+ L_{{\rm d}_{\lambda}^{a+1}(z)}\fg{\lambda-\epsilon_z}{k}.\mlabel{eq:indkey2}
\end{align}
Finally, (\ref{eq:indkey1}) subtracting~(\ref{eq:indkey2}) yields
\begin{align*}
L_{{\rm d}_{\lambda}^{a}(z)}\fg{\lambda}{k}- L_{{\rm d}_{\lambda}^{a}(z)}\fg{\lambda-\epsilon_z}{k} =&\ \fg{\lambda-\epsilon_{{\rm d}_{\lambda}^{a}(z)}-\epsilon_{{\rm d}_{\lambda}^{a}(z)+1}}{k}-L_{{\rm d}_{\lambda}^{a+1}(z)}\fg{\lambda-\epsilon_{{\rm d}_{\lambda}^{a}(z)}-\epsilon_{{\rm d}_{\lambda}^{a}(z)+1}}{k}+ L_{{\rm d}_{\lambda}^{a+1}(z)}\fg{\lambda}{k}\\
&\ -\big( \fg{\lambda-\epsilon_{{\rm d}_{\lambda}^{a}(z)}-\epsilon_{{\rm d}_{\lambda}^{a}(z)+1}}{k}-L_{{\rm d}_{\lambda}^{a+1}(z)}\fg{\lambda-\epsilon_{{\rm d}_{\lambda}^{a}(z)}-\epsilon_{{\rm d}_{\lambda}^{a}(z)+1}}{k}+ L_{{\rm d}_{\lambda}^{a+1}(z)}\fg{\lambda-\epsilon_z}{k} \big)\\
%
%
=&\ L_{{\rm d}_{\lambda}^{a+1}(z)}\fg{\lambda}{k}-L_{{\rm d}_{\lambda}^{a+1}(z)}\fg{\lambda-\epsilon_z}{k}.
\end{align*}
This completes the proof.
\end{proof}

\begin{prop}
Let $\lambda\in\pkl$ and $z\in[\ell]$ with $\lambda_z > \lambda_{z+1}$. Then
\begin{equation}
L_z \fg{\lambda}{k} = \sum_{\substack{\mu\in\pkl,|\mu|<|\lambda| \\ \mu_x = \lambda_x\,\textup{for}\,x\in[z-1]}}a_{\lambda\mu}\fg{\mu}{k},
\mlabel{eq:pp43}
\end{equation}
where $a_{\lambda\mu} = 0$ or $a_{\lambda\mu} = (-1)^{|\lambda|-|\mu|-1}$.
\mlabel{prop:ind}
\end{prop}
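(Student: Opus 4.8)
The plan is to induct on $\ell-z\ge 0$. Since $\lambda_z>\lambda_{z+1}$, Theorem~\mref{thm:finalform} together with its ``moreover'' clause applies with $\mu=\lambda-\epsilon_z\in\pkl$, yielding either $z\in[\bott+1,\ell]$ and $L_z\fg{\lambda}{k}=\fg{\lambda-\epsilon_z}{k}$, or $z\in[\bott]$ and, writing $z_1:=\down{\dkl}{z}$, $L_z\fg{\lambda}{k}=\fg{\lambda-\epsilon_z}{k}+L_{z_1}\fg{\lambda}{k}-L_{z_1}\fg{\lambda-\epsilon_z}{k}$. In the first case, which covers the base case $z=\ell$, the unique summand $\fg{\lambda-\epsilon_z}{k}$ has coefficient $1=(-1)^{|\lambda|-|\lambda-\epsilon_z|-1}$, lies in $\pkl$, has $|\lambda-\epsilon_z|<|\lambda|$, and agrees with $\lambda$ on $[z-1]$, so~\meqref{eq:pp43} holds. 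Assume henceforth $z\in[\bott]$.

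Next I would walk along the bounce path of $\dkl$ through $z$. Put $z_a:={\rm d}_{\lambda}^{a}(z)$ and $c:=|{\rm path}_{\dkl}(z,{\rm bot}_{\dkl}(z))|$, so $z=z_0<z_1<\cdots<z_{c-1}={\rm bot}_{\dkl}(z)\in[\bott+1,\ell]$ by Remark~\mref{re:CSKsum}, and set $D_a:=L_{z_a}\fg{\lambda}{k}-L_{z_a}\fg{\lambda-\epsilon_z}{k}$, so that $L_z\fg{\lambda}{k}=\fg{\lambda-\epsilon_z}{k}+D_1$. Let $a^{\ast}$ be the least index in $[c-1]$ with $a^{\ast}=c-1$ or $\lambda_{z_{a^{\ast}}}>\lambda_{z_{a^{\ast}}+1}$; then $\lambda_{z_a}=\lambda_{z_a+1}$ for every $a<a^{\ast}$, so iterated application of Lemma~\mref{lem:indkey} gives $D_1=D_2=\cdots=D_{a^{\ast}}$. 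If $\lambda_{z_{a^{\ast}}}=\lambda_{z_{a^{\ast}}+1}$, then necessarily $a^{\ast}=c-1$ and Lemma~\mref{lem:indkey} forces $D_{a^{\ast}}=0$, whence $L_z\fg{\lambda}{k}=\fg{\lambda-\epsilon_z}{k}$ and we conclude as before. Otherwise $\lambda_{z_{a^{\ast}}}>\lambda_{z_{a^{\ast}}+1}$; since $z_{a^{\ast}}>z$ we have $\ell-z_{a^{\ast}}<\ell-z$ and also $(\lambda-\epsilon_z)_{z_{a^{\ast}}}>(\lambda-\epsilon_z)_{z_{a^{\ast}}+1}$ with $\lambda-\epsilon_z\in\pkl$, so the inductive hypothesis applies to $L_{z_{a^{\ast}}}\fg{\lambda}{k}$ and to $L_{z_{a^{\ast}}}\fg{\lambda-\epsilon_z}{k}$, expanding them as $\sum_{\mu}p_{\mu}\fg{\mu}{k}$ and $\sum_{\nu}q_{\nu}\fg{\nu}{k}$ with $\mu,\nu\in\pkl$; $|\mu|<|\lambda|$ and $|\nu|<|\lambda-\epsilon_z|=|\lambda|-1$; $\mu_x=\lambda_x$ and $\nu_x=(\lambda-\epsilon_z)_x$ for $x\in[z_{a^{\ast}}-1]$; $p_{\mu}\in\{0,(-1)^{|\lambda|-|\mu|-1}\}$; and $q_{\nu}\in\{0,(-1)^{|\lambda-\epsilon_z|-|\nu|-1}\}=\{0,(-1)^{|\lambda|-|\nu|}\}$.

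Finally I would assemble $L_z\fg{\lambda}{k}=\fg{\lambda-\epsilon_z}{k}+\sum_{\mu}p_{\mu}\fg{\mu}{k}-\sum_{\nu}q_{\nu}\fg{\nu}{k}$ and verify that the three families of functions appearing are pairwise distinct: as $z<z_{a^{\ast}}$, each $\mu$ has $\mu_z=\lambda_z$ whereas $(\lambda-\epsilon_z)_z=\lambda_z-1$, and each $\nu$ has $\nu_z=\lambda_z-1$ but $|\nu|<|\lambda|-1=|\lambda-\epsilon_z|$. Hence no cancellation or doubling of terms occurs, the coefficient of any $\fg{\mu}{k}$ equals one of $1$, $p_{\mu}$, or $-q_{\mu}$, all of which lie in $\{0,(-1)^{|\lambda|-|\mu|-1}\}$ (using $1=(-1)^{|\lambda|-|\lambda-\epsilon_z|-1}$ and $-(-1)^{|\lambda|-|\nu|}=(-1)^{|\lambda|-|\nu|-1}$), and every such $\mu$ lies in $\pkl$, has $|\mu|<|\lambda|$, and agrees with $\lambda$ on $[z-1]\subseteq[z_{a^{\ast}}-1]$; this is exactly~\meqref{eq:pp43}. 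The step I expect to cost the most is the bounce-path collapse $D_1=D_{a^{\ast}}$, where Lemma~\mref{lem:indkey} does the heavy lifting, together with the disjointness check just described, since it is precisely this disjointness that keeps the coefficients inside $\{0,\pm 1\}$ once the two inductively expanded sums are subtracted; the degenerate possibility ${\rm d}_{\lambda}^{1}(z)=\ell$ (then $c=2$, $a^{\ast}=1$, and Lemma~\mref{lem:indkey} is vacuous) falls under the same discussion by appealing directly to the inductive hypothesis for $L_{\ell}$.
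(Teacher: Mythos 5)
Your proposal is correct and follows essentially the same route as the paper's proof: reduce via Theorem~\mref{thm:finalform} to the three-term identity, walk down the bounce path with Lemma~\mref{lem:indkey} until either the two lowering-operator terms cancel or a strict descent (or $\ell$) is reached, and then invoke the inductive hypothesis on $\ell-z$. One place where your write-up is actually a bit more careful than the paper's: you explicitly verify that the singleton $\{\lambda-\epsilon_z\}$, the family from $L_{z_{a^*}}\fg{\lambda}{k}$, and the family from $L_{z_{a^*}}\fg{\lambda-\epsilon_z}{k}$ are pairwise disjoint (using the $z$-th coordinate and sizes), which is precisely what prevents coefficients from doubling when the three sums are combined; the paper leaves this implicit in its final step.
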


\begin{proof}
If $z\in[\bott+1,\ell]$, it follows from Theorem~\mref{thm:finalform} and $\lambda_z > \lambda_{z+1}$ that $L_z \fg{\lambda}{k} = \fg{\mu}{k}$ with $\mu := \lambda-\epsilon_z\in\pkl$. Since $$(-1)^{|\lambda|-|\mu|-1} = (-1)^{|\lambda|-|\lambda-\epsilon_z|-1} = 1,$$
the result is valid.
Suppose $z\in[\bott]$. Theorem~\mref{thm:finalform} implies that
\begin{equation}
L_z\fg{\lambda}{k} = \fg{\mu}{k}-
L_{{\rm d}_{\lambda}^{1}(z)}\fg{\mu}{k}+
L_{{\rm d}_{\lambda}^{1}(z)}\fg{\lambda}{k},
\mlabel{eq:ind1}
\end{equation}
where $\mu = \lambda-\epsilon_z\in\pkl$ by $\lambda_z > \lambda_{z+1}$. We break the remaining proof into the following two cases.

{\bf Case~1.} ${\rm d}_{\lambda}^{1}(z)=\ell$. Then
\begin{align*}
L_z\fg{\lambda}{k} =&\ \fg{\mu}{k}-
L_{{\rm d}_{\lambda}^{1}(z)}\fg{\mu}{k}+
L_{{\rm d}_{\lambda}^{1}(z)}\fg{\lambda}{k}\hspace{1cm}(\text{by~(\ref{eq:ind1})})\\
=&\ \fg{\mu}{k}-
L_{\ell}\fg{\mu}{k}+
L_{\ell}\fg{\lambda}{k}\\
=&\ \fg{\mu}{k}-\fg{\mu-\epsilon_\ell}{k}+\fg{\lambda-\epsilon_\ell}{k}\hspace{1cm}
(\text{by Theorem~\ref{thm:finalform} for the second and third summands}),
\end{align*}
which is the required form in~(\mref{eq:pp43}), as $\mu-\epsilon_\ell,\lambda-\epsilon_\ell\in\pkl$ and
\begin{align*}
a_{\lambda\mu}=&\ (-1)^{|\lambda|-|\mu|-1}=(-1)^{|\lambda|-|\lambda-\epsilon_z|-1}=1,\\
a_{\lambda,\mu-\epsilon_\ell}=&\ (-1)^{|\lambda|-|\mu-\epsilon_\ell|-1}
=(-1)^{|\lambda|-|\lambda-\epsilon_z-\epsilon_\ell|-1}
=-1,\\
a_{\lambda,\lambda-\epsilon_\ell}
=&\ (-1)^{|\lambda|-|\lambda-\epsilon_\ell|-1}
=1.
\end{align*}

{\bf Case~2.} ${\rm d}_{\lambda}^{1}(z)<\ell$. Denote
$$c:=|{\rm path}_{\dkl}(z,{\rm bot}_{\dkl}(z))|.$$
Notice that $\lambda_{{\rm d}_{\lambda}^{a}(z)}\geq\lambda_{{\rm d}_{\lambda}^{a}(z)+1}$ for $a\in[c-1]$ by $\lambda\in\pkl$. There are two subcases to consider.

{\bf Subcase~2.1.}
$\lambda_{{\rm d}_{\lambda}^{a}(z)}=\lambda_{{\rm d}_{\lambda}^{a}(z)+1}$ for all $a\in[c-1]$. By Lemma~\mref{lem:indkey},
\begin{equation}
L_{{\rm d}_{\lambda}^{1}(z)}\fg{\lambda}{k}
-
L_{{\rm d}_{\lambda}^{1}(z)}\fg{\mu}{k} = L_{{\rm d}_{\lambda}^{2}(z)}\fg{\lambda}{k}
-
L_{{\rm d}_{\lambda}^{2}(z)}\fg{\mu}{k}=\cdots=L_{{\rm d}_{\lambda}^{c-1}(z)}\fg{\lambda}{k}
-
L_{{\rm d}_{\lambda}^{c-1}(z)}\fg{\mu}{k}=0,
\mlabel{eq:LL0}
\end{equation}
where the last equal is due to the first case in (\mref{eq:indkey}) and the other equals are because of the second case in (\mref{eq:indkey}).
Hence
$$L_z\fg{\lambda}{k} \overset{(\ref{eq:ind1})}{=} \fg{\mu}{k}-
L_{{\rm d}_{\lambda}^{1}(z)}\fg{\mu}{k}+
L_{{\rm d}_{\lambda}^{1}(z)}\fg{\lambda}{k} \overset{(\ref{eq:LL0})}{=}\fg{\mu}{k},$$
which is the required form in~(\mref{eq:pp43}), as $$(-1)^{|\lambda|-|\mu|-1} = (-1)^{|\lambda|-|\lambda-\epsilon_z|-1} = 1.$$

{\bf Subcase~2.2.} $\lambda_{{\rm d}_{\lambda}^{a}(z)}\neq\lambda_{{\rm d}_{\lambda}^{a}(z)+1}$ for some $a\in[c-1]$. Take $e\in[c-1]$ to be the smallest number such that $\lambda_{{\rm d}_{\lambda}^{e}(z)}>\lambda_{{\rm d}_{\lambda}^{e}(z)+1}$.
We proceed the induction on $\ell-z$ to prove~(\mref{eq:pp43}).
For the initial step of $\ell-z$, $z$ is the maximum vertex of the bounce path passing through $z$, which may be any number in $[\bott+1,\ell]$, that is, $\ell-z$ may be any number in $[0, \ell-\bott-1]$.
So the initial step follows from
the very beginning of the proof.

Consider the inductive step of $\ell - z$. By the definition of $e$,
$$\lambda_{{\rm d}_{\lambda}^{a}(z)} = \lambda_{{\rm d}_{\lambda}^{a}(z)+1}\,\quad \forall a\in [e-1].$$
So we can use the second case of (\ref{eq:indkey}) to get
\begin{equation}
L_{{\rm d}_{\lambda}^{1}(z)}\fg{\lambda}{k}
-
L_{{\rm d}_{\lambda}^{1}(z)}\fg{\mu}{k} = L_{{\rm d}_{\lambda}^{2}(z)}\fg{\lambda}{k}
-
L_{{\rm d}_{\lambda}^{2}(z)}\fg{\mu}{k}=\cdots=L_{{\rm d}_{\lambda}^{e}(z)}\fg{\lambda}{k}
-
L_{{\rm d}_{\lambda}^{e}(z)}\fg{\mu}{k}.
\mlabel{eq:pp431}
\end{equation}
Plugging~(\mref{eq:pp431}) into~(\mref{eq:ind1}),
\begin{align}
L_z\fg{\lambda}{k} =&\ \fg{\mu}{k}-L_{{\rm d}_{\lambda}^{e}(z)}\fg{\mu}{k}+ L_{{\rm d}_{\lambda}^{e}(z)}\fg{\lambda}{k}.
\mlabel{eq:lzgl}
\end{align}
Notice that $\mu\in\pkl$ and
$$\mu_{{\rm d}_{\lambda}^{e}(z)} = (\lambda-\epsilon_z)_{{\rm d}_{\lambda}^{e}(z)} = \lambda_{{\rm d}_{\lambda}^{e}(z)} > \lambda_{{\rm d}_{\lambda}^{e}(z)+1}=(\lambda-\epsilon_z)_{{\rm d}_{\lambda}^{e}(z)+1} =\mu_{{\rm d}_{\lambda}^{e}(z)+1},$$
showing that the hypothesis conditions of the result are satisfied for $\mu$ and ${\rm d}_{\lambda}^{e}(z)$.
Further by the definition of ${\rm d}_{\lambda}^{e}(z)$, we have ${\rm d}_{\lambda}^{e}(z)>z$,
that is, $\ell - {\rm d}_{\lambda}^{e}(z) < \ell -z$.
For the second and third summands in~(\mref{eq:lzgl}), using the inductive hypothesis yields
\begin{equation}
\begin{aligned}
L_{{\rm d}_{\lambda}^{e}(z)}\fg{\mu}{k} =&\ \sum_{\substack{\mu^{(1)}\in\pkl,|\mu^{(1)}|<|\mu| \\ \mu^{(1)}_x =\mu_x\,\text{for}\,x\in[{\rm d}_{\lambda}^{e}(z)-1]}}a_{\mu\mu^{(1)}}\fg{\mu^{(1)}}{k}\,\text{ with }\,  a_{\mu\mu^{(1)}}=0\,\text{ or }\, a_{\mu\mu^{(1)}} = (-1)^{|\mu|-|\mu^{(1)}|-1},\\
L_{{\rm d}_{\lambda}^{e}(z)}\fg{\lambda}{k}=&\ \sum_{\substack{\mu^{(2)}\in\pkl,|\mu^{(2)}|<|\lambda| \\ \mu^{(2)}_x = \lambda_x\,\text{for}\,x\in[{\rm d}_{\lambda}^{e}(z)-1]}}a_{\lambda\mu^{(2)}}\fg{\mu^{(2)}}{k}\,\text{ with }\, a_{\lambda\mu^{(2)}}= 0\,\text{ or }\, a_{\lambda\mu^{(2)}} = (-1)^{|\lambda|-|\mu^{(2)}|-1}.
\mlabel{eq:indmu}
\end{aligned}
\end{equation}
Inserting~(\ref{eq:indmu}) into~(\mref{eq:lzgl}),
\begin{equation}
L_z\fg{\lambda}{k} = \fg{\mu}{k}+\sum_{\substack{\mu^{(1)}\in\pkl,|\mu^{(1)}|<|\mu| \\ \mu^{(1)}_x =\mu_x\,\text{for}\,x\in[{\rm d}_{\lambda}^{e}(z)-1]}}(-1)a_{\mu\mu^{(1)}}\fg{\mu^{(1)}}{k}
+\sum_{\substack{\mu^{(2)}\in\pkl,|\mu^{(2)}|<|\lambda| \\ \mu^{(2)}_x = \lambda_x\,\text{for}\,x\in[{\rm d}_{\lambda}^{e}(z)-1]}}a_{\lambda\mu^{(2)}}\fg{\mu^{(2)}}{k}.
\mlabel{eq:ind3}
\end{equation}
Finally, since
\begin{align*}
1=&\ (-1)^{|\lambda|-|\mu|-1},\,\text{ and}\\
(-1)a_{\mu\mu^{(1)}}=&\ 0\,\text{ or }\,(-1)a_{\mu\mu^{(1)}} = (-1)(-1)^{|\mu|-|\mu^{(1)}|-1} = (-1)^{|\lambda|-|\mu^{(1)}|-1},
\end{align*}
(\mref{eq:ind3}) is the required form of~(\mref{eq:pp43}). This completes the proof.
\end{proof}

Now we are ready for the proof of Theorem~\ref{thm:aim2}.

\begin{proof}[Proof of Theorem~\ref{thm:aim2}]
First,
\begin{align}
G_{1^m}^{\perp}\fg{\lambda}{k} =&\ \Bigg( \sum_{i\geq 0}(-1)^i\binom{m-1+i}{m-1}e_{m+i} \Bigg)^\perp \fg{\lambda}{k}\hspace{1cm}(\text{by~(\ref{eq:G})})\notag\\
=&\ \sum_{i\geq 0}(-1)^i\binom{m-1+i}{m-1}e_{m+i}^\perp K(\dkl;\dkl;\lambda)\hspace{1cm}(\text{by Definition~\ref{defn:cKataFunc}})\notag\\
=&\ \sum^{\ell-m}_{i\geq 0}(-1)^i\binom{m-1+i}{m-1} \Bigg( \sum_{S\subseteq[\ell],|S|=m+i}K(\dkl;\dkl;\lambda-\epsilon_{S})\Bigg)\hspace{1cm}
(\text{by Lemma~\ref{lem:eKatalan}})\notag\\
=&\ \sum^{\ell-m}_{i\geq 0}(-1)^i\binom{m-1+i}{m-1}\Bigg( \sum_{S\subseteq[\ell],|S|=m+i}\prod_{z\in S}L_z K(\dkl;\dkl;\lambda) \Bigg)\hspace{1cm}(\text{by Remark~\ref{re:BMS}~(a)})\notag\\
=&\ \sum^{\ell-m}_{i\geq 0}(-1)^i\binom{m-1+i}{m-1}\Bigg( \sum_{S\subseteq[\ell],|S|=m+i}\prod_{z\in S}L_z \fg{\lambda}{k}\Bigg)\hspace{1cm}(\text{by Definition~\ref{defn:cKataFunc}}).\mlabel{eq:aim2-1}
\end{align}
Claim: for $\lambda\in\pkl$ and $S\subseteq[\ell]$ with $|S|\geq 1$, if $\lambda_z>\lambda_{z+1}$ for all $z\in S$, then
\begin{equation}
\prod_{z\in S}L_z \fg{\lambda}{k} = \sum_{\mu\in\pkl,|\mu|<|\lambda|}a_{\lambda\mu}\fg{\mu}{k},
\mlabel{eq:aim2-ind}
\end{equation}
where $a_{\lambda\mu}=0$ or $a_{\lambda\mu} = (-1)^{|\lambda|-|\mu|-|S|}$. Substitute~(\mref{eq:aim2-ind}) into~(\mref{eq:aim2-1}),
\begin{align*}
G_{1^m}^{\perp}\fg{\lambda}{k}
= \sum^{\ell-m}_{i\geq 0}\sum_{S\subseteq[\ell],|S|=m+i}  \sum_{\mu\in\pkl,|\mu|<|\lambda|}
(-1)^i\binom{m-1+i}{m-1}a_{\lambda\mu}\fg{\mu}{k},
\end{align*}
where $(-1)^i\binom{m-1+i}{m-1}a_{\lambda\mu}=0$ or
\[
(-1)^i\binom{m-1+i}{m-1}a_{\lambda\mu} = (-1)^i\binom{m-1+i}{m-1}(-1)^{|\lambda|-|\mu|-(m+i)} = \binom{m-1+i}{m-1}(-1)^{|\lambda|-|\mu|-m}.
\]
Taking $c_{\lambda\mu} := (-1)^i\binom{m-1+i}{m-1}a_{\lambda\mu}$, then $(-1)^{|\lambda|-|\mu|-m}c_{\lambda\mu} = 0 \in\ZZ_{\geq0}$ or
\[
(-1)^{|\lambda|-|\mu|-m}c_{\lambda\mu} = (-1)^{|\lambda|-|\mu|-m} \binom{m-1+i}{m-1}(-1)^{|\lambda|-|\mu|-m} = \binom{m-1+i}{m-1} \in\ZZ_{\geq0},
\]
as required.

The remaining proof of the claim proceeds by induction on $|S|\geq 1$.
For the initial step of $|S|=1$, since $\lambda_z>\lambda_{z+1}$ for all $z\in S$ by the hypothesis, we have
\begin{align*}
\prod_{z\in S}L_z \fg{\lambda}{k} =&\ L_z\fg{\lambda}{k}\hspace{1cm}(\text{by assuming $S=\{z\}$})\\
=&\ \sum_{\substack{\mu\in\pkl,|\mu|<|\lambda| \\ \mu_x = \lambda_x\,\textup{for}\,x\in[z-1]}}a_{\lambda\mu}\fg{\mu}{k}\hspace{1cm}
(\text{by $\lambda_z>\lambda_{z+1}$ and Proposition~\ref{prop:ind}})\\
=&\ \sum_{\mu\in\pkl,|\mu|<|\lambda|}a_{\lambda\mu}\fg{\mu}{k}\hspace{1cm}(\text{by taking $a_{\lambda\mu}:=0$ if $\mu_x \neq\lambda_x$ for some $x\in[z-1]$}),
\end{align*}
where $a_{\lambda\mu}=0$ or $a_{\lambda\mu} = (-1)^{|\lambda|-|\mu|-|S|}$.
Consider the inductive step of $|S| > 1$. Take $x$ to be the maximum element in $S$.
Then
\begin{align}
\prod_{z\in S}L_z \fg{\lambda}{k} =
&\ \prod_{z\in S\setminus x}L_z \Big(L_x \fg{\lambda}{k}\Big)\notag\\
=&\ \prod_{z\in S\setminus x}L_z\Bigg( \sum_{\substack{\mu'\in\pkl,|\mu'|<|\lambda| \\ \mu'_y = \lambda_y\,\textup{for}\,y\in[x-1]}}a_{\lambda\mu'}\fg{\mu'}{k}
\Bigg)\hspace{1cm}(\text{by $\lambda_x>\lambda_{x+1}$ and Proposition~\ref{prop:ind}})\notag\\
=&\ \sum_{\substack{\mu'\in\pkl,|\mu'|<|\lambda| \\ \mu'_y = \lambda_y\,\textup{for}\,y\in[x-1]}}a_{\lambda\mu'}\prod_{z\in S\setminus x}L_z\fg{\mu'}{k}
\hspace{1cm}(\text{by $L_z$ being additive}),\mlabel{eq:LgtoaLg}
\end{align}
where $a_{\lambda\mu'}= 0$ or $a_{\lambda\mu'} = (-1)^{|\lambda|-|\mu'|-1}$.
By the definition of $x$, for each $z\in S\setminus x$ and each $\mu'$ in~(\mref{eq:LgtoaLg}), we have $z\in[x-1]$ and so
\[
\mu'_z = \lambda_z >\lambda_{z+1} = \mu'_{z+1},
\]
implying that the hypothesis condition of the claim is satisfied for $S\setminus x$ and $\mu'$. Then by the inductive hypothesis,
for each $\mu'$ in~(\mref{eq:LgtoaLg}),
\begin{equation}
\prod_{z\in S\setminus x}L_z\fg{\mu'}{k} = \sum_{\mu\in\pkl,|\mu|<|\mu'|}a_{\mu'\mu}\fg{\mu}{k},
\mlabel{eq:Lgind}
\end{equation}
where $a_{\mu'\mu}= 0$ or $a_{\mu'\mu}= (-1)^{|\mu'|-|\mu|-|S\setminus x|}$.
Substituting~(\mref{eq:Lgind}) into~(\mref{eq:LgtoaLg}),
\begin{align*}
\prod_{z\in S}L_z \fg{\lambda}{k}= \sum_{\mu\in\pkl,|\mu|<|\lambda|}a_{\lambda\mu'}a_{\mu'\mu}\fg{\mu}{k}.
\end{align*}
Hence~(\mref{eq:aim2-ind}) is valid by
$a_{\lambda\mu'}a_{\mu'\mu} = 0$ or
$$a_{\lambda\mu'}a_{\mu'\mu}= (-1)^{|\lambda|-|\mu'|-1}(-1)^{|\mu'|-|\mu|-|S\setminus x|} = (-1)^{|\lambda|-|\mu|-|S|}.$$
This completes the proof of the claim and the proof of the result.
\end{proof}

\subsection{Proof of Theorem~\mref{thm:aim3}}
We finally arrive at the proof of Theorem~\mref{thm:aim3}.

\begin{proof}[Proof of Theorem~\ref{thm:aim3}]
By \cite[Proposition~2.16~(c)]{BMS}, closed $k$-Schur Katalan functions $\fg{\lambda}{k}$ with $\lambda\in\pkl$ satisfy shift invariance
\begin{equation}
G_{1^\ell}^\perp \fg{\lambda+1^\ell}{k+1} = \fg{\lambda}{k}.
\mlabel{eq:shiftinvariance}
\end{equation}
Suppose $\lambda\in\spkl$. Then
$$(\lambda+1^\ell)_x = \lambda_x+1 > \lambda_{x+1}+1 = (\lambda+1^\ell)_{x+1}, \quad \forall x\in[\ell-1],$$
and so $\lambda+1^\ell\in{\rm sP}_\ell^{k+1}$. Hence
\begin{align*}
\fg{\lambda}{k} =&\ G_{1^\ell}^\perp \fg{\lambda+1^\ell}{k+1}\hspace{1cm}
(\text{by~(\ref{eq:shiftinvariance})})\\
=&\ \sum_{\mu\in{\rm P}}c_{\lambda+1^\ell,\mu}\fg{\mu}{k+1}\hspace{1cm}
(\text{by Theorem~\ref{thm:aim2} and taking $m=\ell$}),
\end{align*}
where $(-1)^{|\lambda+1^\ell|-|\mu|-\ell}c_{\lambda+1^\ell,\mu} \in\ZZ_{\geq0}.$
Taking $a_{\lambda\mu} := c_{\lambda+1^\ell,\mu}$, we obtain
$$
(-1)^{|\lambda|-|\mu|}a_{\lambda\mu} = (-1)^{|\lambda|-|\mu|}c_{\lambda+1^\ell,\mu}=(-1)^{|\lambda+1^\ell|-|\mu|-\ell}c_{\lambda+1^\ell,\mu} \in\ZZ_{\geq0}.
$$
This completes the proof.
\end{proof}

We end the paper with the following remark.

\begin{remark}
\begin{enumerate}
\item The condition that $\lambda_z>\lambda_{z+1}$ is necessary in Proposition~\mref{prop:ind}.
A counterexample is Example~\mref{ex:Lg}. In more details, in that example,
\begin{align*}
\lambda= (5,2,2,1,1,1), \quad \lambda_2=2=\lambda_{3}, \quad L_2 \fg{(5,2,2,1,1,1)}{k} = \fg{(5,1,1,1,1,1)}{k} - \fg{(5,1,1,1,1,0)}{k}+ \fg{(5,2,2,1,1,0)}{k}.
\end{align*}
%
The coefficient of the first summand is 1, but
$$1\neq-1=(-1)^{|(5,2,2,1,1,1)|-|(5,1,1,1,1,1)|-1}.$$

\item The reason for the condition that $\lambda\in\spkl$ is strictly decreasing partition in Theorems~\mref{thm:aim2} and~\mref{thm:aim3} is as follows.
If $\lambda\notin\spkl$, then there exists $z\in[\ell-1]$ such that $\lambda_z=\lambda_{z+1}$. So we cannot apply
Proposition~\mref{prop:ind} in our method to arrive proofs of Theorems~\mref{thm:aim2} and~\mref{thm:aim3}.
\end{enumerate}
\end{remark}

\smallskip

\noindent
{\bf Acknowledgements}: This work is supported by National Natural Science
Foundation of China (12071191) and Innovative Fundamental Research Group Project of Gansu
Province (23JRRA684).

\noindent
{\bf Declaration of interests.} The authors have no conflicts of interest to disclose.

\noindent
{\bf Data availability.} Data sharing is not applicable as no new data were created or analyzed.

\end{document}